\newtheorem{theorem}{Theorem}[section]
\newtheorem*{thm*}{Theorem}
\newtheorem*{mthm*}{Main Theorem}
\newtheorem{lem}[theorem]{Lemma}
\newtheorem{cor}[theorem]{Corollary}
\newtheorem*{cor*}{Corollary}
\newtheorem*{thmA}{Theorem A}
\newtheorem*{thmB}{Theorem B}
\theoremstyle{definition}
\newtheorem{example}[theorem]{Example}
\newtheorem*{conj*}{Conjecture}
\newtheorem{remark}[theorem]{Remark}
\numberwithin{equation}{section}
\newcommand{\red}[1]{\textcolor{red}{#1}}
\newcommand{\snk}{\textsf{sn}_{\kappa}}
\newcommand{\cs}{\textsf{cs}}
\newcommand{\md}{\textsf{md}}
\newcommand{\sn}{\textsf{sn}}
\newcommand{\dist}[1]{{\textsf d}\left(#1\right)}
\newcommand{\diam}{\text{diam}}
\newcommand{\Alex}{\text{Alex\,}}
\newcommand{\Alexnk}{\text{Alex}^n(\kappa)}
\newcommand{\dsp}{\displaystyle}
\newcommand{\geod}[1]{[\,#1\,]}
\newcommand{\geodii}[1]{\,]#1[\,}
\newcommand{\geodci}[1]{[\,#1[\,}
\newcommand{\geodic}[1]{\,]#1\,]}
\newcommand{\ang}[3]{\measuredangle\left[{#1}\,_{#3}^{#2}\right]}
\newcommand{\tang}[4]{\tilde\measuredangle_{#1}\left({#2}\,_{#4}^{#3}\right)}
\newcommand{\cnnt}[2]{{#2}^{*#1}}
\begin{document}

%\title{A globalization with optimal density}

%\title{Optimal lower curvature bounds with globalization}

%\title{Globalization with probability of geodesics}

\title{Globalization with probabilistic convexity}
%    Information for first author
\author{Nan Li}
%    Address of record for the research reported here
\address{Department of Mathematics, Penn State University, University Park, PA 16802}
%    Current address
%\curraddr{Department of Mathematics,
%University of Notre Dame, Notre Dame, IN 46556}
\email{lilinanan@gmail.com, nul12@psu.edu}
%    \thanks will become a 1st page footnote.
%\thanks{}

%    Information for second author

\thanks{The author was partially supported by the research funds managed by Penn State University.}

%    General info
%%%%%%%%\subjclass[2000]{Primary \red{53A02, 14E20; Secondary 46E25, 20C20}}

\date{\today}

%\dedicatory{This paper is dedicated to our advisors.}

%\keywords{Differential geometry, algebraic geometry}

\maketitle

\pagestyle{headings}

%probability of certain existence of geodesics

\begin{abstract}
  We introduce a notion of probabilistic convexity and generalize some classical globalization theorems in Alexandrov geometry. A weighted Alexandrov's lemma is developed as a basic tool.
\end{abstract}

\tableofcontents

\section*{Introduction}

Recall that a length metric space $X$ is said to be an Alexandrov space with curvature bounded from below by $\kappa$, if for any quadruple $(p;x_1,x_2,x_3)$, the sum of comparison angles
\begin{align}
  \tang{\kappa}{p}{x_1}{x_2} +\tang{\kappa}{p}{x_2}{x_3}+\tang{\kappa}{p}{x_3}{x_1}\le2\pi,
  \label{comp.e1}
\end{align}
or at least one of the model angles $\tang{\kappa}{p}{x_i}{x_j}$ is not defined. We let $\ang{q}{p}{s}$ denote the angle between two geodesics $\geod{qp}$ and $\geod{qs}$, which is defined by $\limsup\tang{\kappa}qxy$, as $x\in\geod{qp}$, $y\in\geod{qs}$ and $x,y\to q$. The global comparison (\ref{comp.e1}) is equivalent to the Toponogov comparison: for any geodesic $\geod{qs}$ and any points $p\notin\geod{qs}$, plus for $x\in\geod{qs}\setminus\{q,s\}$, we have $\ang{q}{p}{s}\ge\tang{\kappa}{q}{p}{s}$ and $\ang{x}{p}{q}+\ang{x}{p}{s}=\pi$. In the Riemannian  case, the sectional curvature is locally defined and the existence of its lower bound implies the corresponding global Toponogov comparison. It is interesting to consider the similar question in Alexandrov geometry: does local curvature bound imply global curvature bound? This property, if it holds, is so-called globalization property.

For our convenience, we use the following definitions for a local Alexandrov space. An open domain $\Omega$ is called a $\kappa$-domain if for any geodesic $\geod{qs}\subset\Omega$ and any points $p\in\Omega\setminus\geod{qs}$ and $x\in\geod{qs}\setminus\{q,s\}$, we have $\ang{q}{p}{s}\ge\tang{\kappa}{q}{p}{s}$ and $\ang{x}{p}{q}+\ang{x}{p}{s}=\pi$. Clearly, $X$ is an Alexandrov space if and only if it is a $\kappa$-domain. A length metric space $\mathcal U$ is said to be locally curvature bounded from below by $\kappa$ (local Alexandrov space), if for any $p\in \mathcal U$, there is a $\kappa$-domain $\Omega_p\ni p$. Let $\Alex(\kappa)$ and $\Alex_{loc}(\kappa)$ denote the collection of Alexandrov spaces and local Alexandrov spaces with curvature $\ge\kappa$, respectively. Let
$\bar{\mathcal U}$ denote the metric completion of $\mathcal U$, that is, the completion of $\mathcal U$ with respect to its intrinsic metric. The Globalization theorem in \cite{BGP} states that if $\mathcal U\in\Alex_{loc}(\kappa)$ is complete, that is, $\mathcal U=\bar{\mathcal U}$, then ${\mathcal U}\in\Alex(\kappa)$. In general, $\mathcal U$, or $\bar{\mathcal U}$, may not be a global Alexandrov space if\, $\bar{\mathcal U}\neq \mathcal U$ (see Example \ref{example.1}). This is partially because there are points in $\bar{\mathcal U}\setminus \mathcal U$ that are not contained in any $\kappa$-domain. However, this case is particularly important if one wants to prove $X\in\Alex(\kappa)$, but the lower curvature bound can only be verified on a dense subset $\mathcal U\subset X$ (for instance, the manifold points), whose metric completion $\bar{\mathcal U}=X$ (see \cite{GW13} and \cite{HS13}).

For a point $p\in\mathcal U$ and a subset $S\subset\mathcal U$, let
$$\cnnt{p}{S}=\{q\in S: \text{there is a geodesic } \geod{pq} \text{ connecting } p \text{ and } q \text{ in } \mathcal U\}.$$
We rephrase a few classical convexities using the above terminologies.
\begin{itemize}
  \item Convex -- for every point $p\in\mathcal U$, $\cnnt{p}{\mathcal U}=\mathcal U$.
  \item A.e.-convex -- for every point $p\in\mathcal U$, $\mathcal H^n(\mathcal U\setminus\cnnt{p}{\mathcal U})=0$, where $n<\infty$ is the Hausdorff dimension of $\mathcal U$ and $\mathcal H^n$ is the $n$-dimensional Hausdorff measure.
  \item Weakly a.e.-convex -- for any $p\in\mathcal U$ and any $\epsilon>0$, there is $p_1\in B_\epsilon(p)$ so that $\mathcal H^n(\mathcal U\setminus\cnnt{p_1}{\mathcal U})=0$.
  \item Weakly convex -- for any $p, q\in \mathcal U$ and any $\epsilon>0$, there is a point $p_1\in B_\epsilon(p)$ such that $\cnnt{p_1}{B_\epsilon(q)}\neq\varnothing$.
\end{itemize}
In \cite{Pet13}, Petrunin shows that if $\mathcal U\in\Alex_{loc}(\kappa)$ is convex, then $\bar{\mathcal U}\in\Alex(\kappa)$. For example, $\mathcal U$ can be an open convex domain in $\mathbb R^n$. It is proved in \cite{GW13} that the above statement remains true if ``convex" is replaced by ``a.e.-convex". In this case, $\mathcal U$ can be an open convex domain in $\mathbb R^n$ with finitely many points removed.

In this paper, we introduce a notion of probabilistic convexity and prove some globalization theorems related to it. Let $p\in\mathcal U$ and $\geod{qs}$ be a geodesic in $\mathcal U$. Consider the probability that a point on $\geod{qs}$ can be connected to $p$ by a geodesic in $\mathcal U$:
$${\bf Pr}\left(p\prec\geod{qs}\right)=\frac{\mathcal
  H^1\left(\cnnt p{\geod{qs}}\right)} {\mathcal
  H^1\left(\geod{qs}\right)}.
$$
Here $\mathcal H^1$ denotes the $1$-dimensional Hausdorff measure. We say that $\mathcal U$ is weakly $\mathfrak p_\lambda$-convex if for any $p,q,s\in\mathcal U$ and any $\epsilon>0$, there are points $p_1\in B_\epsilon(p)$, $q_1\in B_\epsilon(q)$, $s_1\in B_\epsilon(s)$ and a geodesic $\geod{q_1s_1}\subset\bar{\mathcal U}$ so that ${\bf Pr}(p_1\prec\geod{q_1 s_1})>\lambda-\epsilon$. By taking $s\in B_\epsilon(q)$, we see that if $\lambda>0$, then
weak $\mathfrak p_\lambda$-convexity implies weak convexity. If $\mathcal U\in\Alex_{loc}(\kappa)$, then weak a.e.-convexity implies weak $\mathfrak p_1$-convexity (see the proof of Corollary \ref{cor.ThmA}). Our main results are stated as follows.

\begin{thmA}
  If $\mathcal U\in\Alex_{loc}(\kappa)$ is weakly $\mathfrak p_1$-convex, then its metric completion $\bar{\mathcal U}\in\Alex(\kappa)$.
\end{thmA}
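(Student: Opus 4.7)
The plan is to verify the global Toponogov comparison on a dense subset of $\bar{\mathcal U}$ and then extend by continuity. I proceed in four stages.

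\emph{Reduction to $\mathcal U$.} Since $\mathcal U$ is dense in its metric completion and the model angle $\tang{\kappa}{\cdot}{\cdot}{\cdot}$ depends continuously on its arguments, it suffices to establish, for every triple $p,q,s\in\mathcal U$ and every $\epsilon>0$, the Toponogov comparison $\ang{q}{p}{s}\ge\tang{\kappa}{q}{p}{s}$ together with the adjacent-angle identity, up to an error vanishing as $\epsilon\to 0$. Invoking weak $\mathfrak p_1$-convexity, I replace $p,q,s$ by points $p_1\in B_\epsilon(p)$, $q_1\in B_\epsilon(q)$, $s_1\in B_\epsilon(s)$ and select a geodesic $\gamma:=\geod{q_1s_1}\subset\bar{\mathcal U}$ with $\mathcal H^1\bigl(\gamma\setminus\cnnt{p_1}{\gamma}\bigr)<\epsilon\,\mathcal H^1(\gamma)$.

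\emph{Local subdivision.} Because $\mathcal U\in\Alex_{loc}(\kappa)$, a compactness argument along the compact set $\gamma\cup\{p_1\}$ yields a uniform scale $r>0$ such that every triangle with apex $p_1$ and base a sub-arc of $\gamma$ of length $\le r$, whose lateral sides are $\mathcal U$-geodesics, sits inside a $\kappa$-domain. Choose $N$ with $\mathcal H^1(\gamma)/N\ll r$. By the probabilistic bound, within each of the $N$ ideal equipartition arcs I can locate a node $x_i\in\cnnt{p_1}{\gamma}$ whose position deviates from the ideal one by at most $\epsilon\,\mathcal H^1(\gamma)/N$. Each small triangle $\triangle p_1 x_{i-1} x_i$ then lies in a $\kappa$-domain and satisfies the local Toponogov comparison at $p_1$.

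\emph{Chaining via the weighted Alexandrov's lemma.} I splice the small triangles $\triangle p_1 x_{i-1} x_i$ iteratively along the shared sides $\geod{p_1 x_i}$. The classical Alexandrov lemma would apply immediately if the $x_i$ lay exactly on a geodesic from $q_1$ to $s_1$; in our setting they only approximate equipartition nodes of $\gamma$, with positional slack controlled by $\epsilon$. The weighted Alexandrov's lemma developed earlier in the paper is designed to absorb precisely this slack into weights on the contributing angles, so after $N-1$ splicings I obtain $\tang{\kappa}{q_1}{p_1}{s_1}\le\ang{q_1}{p_1}{s_1}+o_\epsilon(1)$. Letting $\epsilon\to 0$ and using continuity of model angles under the perturbation recovers the Toponogov comparison for the original triple $(p;q,s)$; the adjacent-angle identity $\ang{x}{p}{q}+\ang{x}{p}{s}=\pi$ at an interior point $x\in\gamma\cap\cnnt{p_1}{\gamma}$ follows by applying the same chaining on each side of $x$ and gluing along the $\mathcal U$-geodesic $\geod{p_1 x}$.

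I expect the main obstacle to be controlling the cumulative error across the $N$ splicing steps. Each step introduces (i) the standard local-to-global slack from combining two model angles into one, and (ii) a positional perturbation of size $\epsilon/N$ coming from the $\mathfrak p_1$-selection of $x_i$; without care these errors would compound multiplicatively in $N$. The role of the weighted Alexandrov's lemma is precisely to ensure that they aggregate additively, so that the total is $o_\epsilon(1)$ uniformly in $N$ and can be driven to zero. A secondary, more routine difficulty is securing the uniform scale $r$ and ensuring that each sub-arc $[x_{i-1},x_i]$ of $\gamma$ lies in $\mathcal U$ rather than merely in $\bar{\mathcal U}$ so the requisite $\kappa$-domain exists; since all $x_i\in\mathcal U$ and the bad set on $\gamma$ has measure $<\epsilon\,\mathcal H^1(\gamma)$, this is handled by a further small perturbation of the subdivision nodes.
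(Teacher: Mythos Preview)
Your plan has two genuine gaps, and together they miss the central mechanism of the proof.

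First, the ``uniform scale $r$'' in your local subdivision step does not exist. You claim a compactness argument on $\gamma\cup\{p_1\}$ gives an $r>0$ so that every thin triangle $\triangle p_1x_{i-1}x_i$ with base $\le r$ ``sits inside a $\kappa$-domain''. But these triangles have height $\approx |p_1q_1|$, so they certainly do not lie in any small $\kappa$-domain. Even interpreted charitably as ``satisfies the $\kappa$-comparison at $x_i$'', the claim fails: the radius within which $\ang{x}{p_1}{y}\ge\tang{\kappa}{x}{p_1}{y}$ holds (Lemma~\ref{thin.comp}, Corollary~\ref{uniform.b1-1}) depends on how the particular $\kappa$-geodesic $\geod{p_1x}$ sits inside its $\kappa$-domain covering, and there is no a priori uniform lower bound on this radius as $x$ ranges over $\cnnt{p_1}{\gamma}$. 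The paper states explicitly in the introduction that the absence of such a uniform bound is \emph{the} main obstacle in the incomplete case; compactness cannot supply it, and $\bar{\mathcal U}$ is not even assumed locally compact. The paper's substitute is a Vitali covering argument (proof of Lemma~\ref{comp.X1}): for each $x\in\cnnt{p_1}{\gamma}$ there is \emph{some} $y$ (depending on $x$) giving a good segment $\geod{xy}$, and Vitali extracts a disjoint subcollection covering almost all of the good set.

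Second, you misidentify the role of the weighted Alexandrov's lemma. It is not a device to absorb ``positional slack'' of approximately equispaced nodes; it is a tool to combine sub-triangles with \emph{different} comparison curvatures. The real issue is what happens on the bad arcs $\gamma\setminus\cnnt{p_1}{\gamma}$: there you have no geodesic to $p_1$, hence no $\kappa$-comparison. The paper handles this with Lemma~\ref{exist.ks}, which uses only that $\gamma$ is a $\kappa$-geodesic to produce a \emph{uniform} (but possibly very negative) background bound $\kappa^*$ valid on all of $\gamma$. The weighted Alexandrov's lemma (Corollary~\ref{cor.mult.Alex}) then averages the $\kappa_1$-comparison on the good arcs with the $\kappa^*$-comparison on the bad arcs to yield $\bar\kappa=\lambda^2\kappa_1+(1-\lambda^2)\kappa^*$; only because $\lambda=1$ in the $\mathfrak p_1$ case does $\kappa^*$ drop out. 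Your outline never produces $\kappa^*$, never ensures $\gamma$ is a $\kappa$-geodesic (this requires the perturbation of Lemma~\ref{conv.geod.pert}), and therefore has no comparison at all between consecutive good nodes. Finally, the actual proof verifies the $(1{+}3)$-quadruple inequality via Corollary~\ref{comp.X.cor1} and Lemma~\ref{pert.triangle}, rather than the hinge form of Toponogov, precisely to avoid needing geodesics in $\bar{\mathcal U}$ between arbitrary pairs.
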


The following theorem is about the optimal lower curvature bound for the metric completion.

\begin{thmB}
  Suppose that $\mathcal U\in\Alex_{loc}(\kappa)$ is weakly $\mathfrak p_\lambda$-convex for some $\lambda>0$. If $\bar{\mathcal U}\in\Alex(\kappa_0)$ for some $\kappa_0\in\mathbb R$, then $\bar{\mathcal U}\in\Alex(\kappa)$.
\end{thmB}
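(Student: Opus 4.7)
My plan is to deduce Theorem B from Theorem A by showing that, under the extra assumption $\bar{\mathcal U}\in\Alex(\kappa_0)$, weak $\mathfrak p_\lambda$-convexity for some $\lambda>0$ automatically upgrades to weak $\mathfrak p_1$-convexity. Once this upgrade is established, applying Theorem A to $\mathcal U$ yields $\bar{\mathcal U}\in\Alex(\kappa)$. The key benefit provided by the hypothesis $\bar{\mathcal U}\in\Alex(\kappa_0)$ is the structural rigidity it imposes: geodesics between any two points of $\bar{\mathcal U}$ exist, they do not branch, they depend continuously on their endpoints, and for each fixed base point the geodesic to a typical target is unique. Moreover, $\mathcal U$ is open in $\bar{\mathcal U}$, so a geodesic lies in $\mathcal U$ precisely when it avoids the closed set $\bar{\mathcal U}\setminus\mathcal U$.

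To carry out the upgrade, I would fix $p,q,s\in\mathcal U$ and $\epsilon>0$ and proceed iteratively. Weak $\mathfrak p_\lambda$-convexity produces perturbations $p',q',s'$ and a geodesic $\gamma=\geod{q's'}\subset\bar{\mathcal U}$ together with a connectable set $A\subset\gamma$ of relative length at least $\lambda-\epsilon$. For each $y$ in the bad set $B=\gamma\setminus A$, every geodesic from $p'$ to $y$ must meet $\bar{\mathcal U}\setminus\mathcal U$. I would then reapply weak $\mathfrak p_\lambda$-convexity to small triples that localize near $B$, with the base point replaced by a slightly perturbed $p''$ close to $p'$. By non-branching of geodesics and their continuous dependence on endpoints in $\Alex(\kappa_0)$, the new connectable points for $p''$ remain connectable to the original $p'$ modulo a negligible further perturbation, so a definite $\lambda$-fraction of the previously bad part $B$ is recovered at each step. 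Iterating, the bad fraction shrinks geometrically and falls below $\epsilon$ after finitely many steps, while the cumulative perturbation of the base point stays within $\epsilon$.

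The main obstacle is controlling this iteration: one must verify that each step genuinely contributes a definite $\lambda$-fraction of new connectable points on the remaining bad part rather than merely reproducing old ones, and that all the perturbations can be coupled so as to stay within tolerance. This is where both the weighted Alexandrov lemma developed earlier in the paper and the non-branching together with continuous dependence of geodesics in $\Alex(\kappa_0)$ become essential. Without the $\kappa_0$-hypothesis, geodesics in $\bar{\mathcal U}$ could branch and the previously accumulated gains could be lost, which explains why that hypothesis is needed for this reduction.
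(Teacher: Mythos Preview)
Your approach differs substantially from the paper's, and it has a genuine gap that is not easily closed.

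The paper does \emph{not} attempt to upgrade weak $\mathfrak p_\lambda$-convexity to weak $\mathfrak p_1$-convexity. Instead, it iterates directly on the curvature bound. The key input is Lemma~\ref{comp.X} (via Corollary~\ref{comp.X.cor2}): when $\bar{\mathcal U}\in\Alex(\kappa_0)$ one may take $\kappa^*=\kappa_0$ in the weighted comparison, and then for any quadruple whose sides $\geod{px_i}$ are $\kappa$-geodesics the $(1{+}3)$-point comparison holds at every level $\underline\kappa<\lambda^2\kappa+(1-\lambda^2)\kappa_0$. Combined with the perturbation Lemma~\ref{pert.triangle}, this yields $\bar{\mathcal U}\in\Alex(\kappa_1)$ with $\kappa_1=\lambda^2\kappa+(1-\lambda^2)\kappa_0$. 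Now feed $\kappa_1$ back in place of $\kappa_0$ and repeat: the recursion $\kappa_{i+1}=\lambda^2\kappa+(1-\lambda^2)\kappa_i$ converges to $\kappa$ since $0<\lambda\le 1$, hence $\bar{\mathcal U}\in\Alex(\kappa)$. No improvement of the convexity hypothesis is ever needed; the parameter $\lambda$ enters only quantitatively through the weighted Alexandrov lemma.

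Your plan, by contrast, asks that a fixed (or only finitely perturbed) base point $p'$ eventually be connectable in $\mathcal U$ to a set of density $>1-\epsilon$ on a single geodesic $\geod{q's'}$. The step where this breaks is the aggregation of connectable sets coming from different base points. Each reapplication of weak $\mathfrak p_\lambda$-convexity produces a \emph{new} base point $p''$ and a \emph{new} perturbed segment; connectability of $y$ to $p''$ in $\mathcal U$ does not transfer to $p'$, because the stability statement for $\kappa$-geodesics (Lemma~\ref{geod.close}) gives a tolerance depending on the particular geodesic $\geod{p''y}$ and its $\kappa$-domain covering, with no uniformity over $y$. Thus you cannot guarantee that the previously accumulated connectable set survives the next perturbation of the base point, nor that the fresh $\lambda$-fraction lands in the old bad set $B$ rather than overlapping what was already good. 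You flag this yourself as ``the main obstacle,'' but it is not merely a detail to write up: it is the entire content of the reduction, and neither the $\Alex(\kappa_0)$ hypothesis (non-branching, continuous dependence) nor the weighted Alexandrov lemma supplies the required uniformity. The paper's bootstrap on $\kappa_i$ avoids this difficulty completely.
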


With some extra argument, we prove the following corollary.

\begin{cor}\label{cor.ThmA}
If $\mathcal U\in\Alex_{loc}(\kappa)$ is weakly a.e.-convex then its metric completion $\bar{\mathcal U}\in\Alex(\kappa)$.
\end{cor}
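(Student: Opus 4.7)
The plan is to derive Corollary \ref{cor.ThmA} from Theorem A by proving that, in the local Alexandrov setting, weak a.e.-convexity implies weak $\mathfrak{p}_1$-convexity. Fix $p,q,s\in\mathcal U$ and $\epsilon>0$; the goal is to produce $p_1,q_1,s_1$ in the respective $\epsilon$-balls together with a geodesic $\geod{q_1s_1}\subset\bar{\mathcal U}$ satisfying $\mathbf{Pr}(p_1\prec\geod{q_1s_1})>1-\epsilon$.

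First I would invoke weak a.e.-convexity at $p$ to obtain $p_1\in B_\epsilon(p)\cap\mathcal U$ with $\mathcal H^n(N)=0$, where $N:=\mathcal U\setminus\cnnt{p_1}{\mathcal U}$; thus almost every point of $\mathcal U$ is reachable from $p_1$ by a geodesic inside $\mathcal U$. Next I would consider, for a small parameter $\delta\le\epsilon$, the family of geodesics $\gamma_{q's'}\subset\bar{\mathcal U}$ from $q'\in B_\delta(q)$ to $s'\in B_\delta(s)$ and the evaluation map
\begin{equation*}
F\colon B_\delta(q)\times B_\delta(s)\times[0,1]\longrightarrow\bar{\mathcal U},\qquad F(q',s',t)=\gamma_{q's'}(t),
\end{equation*}
defined on the full-measure subset where the geodesic is unique. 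The heart of the argument is to show that the push-forward $F_\ast$ of the Lebesgue measure on the parameter space is absolutely continuous with respect to $\mathcal H^n$ on $\bar{\mathcal U}$. Granting this, $F^{-1}(N)$ is Lebesgue-null; Fubini then yields that for almost every pair $(q_1,s_1)$ the set $\{t\in[0,1]:\gamma_{q_1s_1}(t)\in N\}$ has $\mathcal H^1$-measure zero, so $\mathbf{Pr}(p_1\prec\geod{q_1s_1})=1>1-\epsilon$. Weak $\mathfrak{p}_1$-convexity then follows, and Theorem A delivers $\bar{\mathcal U}\in\Alex(\kappa)$.

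The main obstacle is the absolute-continuity (equivalently, a Jacobian-type non-degeneracy) property of $F_\ast$. Inside a single $\kappa$-domain this follows from the standard spreading estimate for neighbouring geodesics based on Toponogov comparison: the endpoint-evaluation map is bi-Lipschitz with constants controlled by the distances involved and the model-space curvature. The \emph{weighted Alexandrov lemma} announced in the abstract is designed precisely to convert this Jacobian control into measure statements that are robust enough to tolerate the two delicacies that arise here, namely that the geodesic $\gamma_{q_1s_1}$ may momentarily exit into $\bar{\mathcal U}\setminus\mathcal U$, and that for distant $q,s$ one must chain across several $\kappa$-domains rather than work in a single one. Once that ingredient is in place the Fubini reduction goes through and the corollary is reduced to Theorem A.
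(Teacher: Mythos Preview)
Your overall plan---reduce to Theorem A by showing that weak a.e.-convexity forces weak $\mathfrak p_1$-convexity---is exactly the paper's plan. The gap is in the execution of that implication, and specifically in the tool you invoke for it.

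The weighted Alexandrov lemma is \emph{not} what handles the Jacobian/spreading estimate. That lemma (Lemmas \ref{tri.comp.2.2-im}--\ref{triangle.comp.mult}, Corollary \ref{cor.mult.Alex}) is about gluing comparison triangles with \emph{different} curvature bounds into a single comparison; it is the engine behind Theorem~A itself and plays no role in the proof of the corollary. What you actually need here are the thin-triangle comparisons of Section~2 (Lemma \ref{geod.close}, Lemma \ref{thin.comp.er}): once a reference $\kappa$-geodesic $\geod{q_1\bar s}\subset\mathcal U$ is fixed, those lemmas give angle comparison for \emph{any} triple of nearby points, and from that one reads off a quantitative spreading estimate of the form $|xy|/|us_0|\ge c\cdot\min\{|q_1x|,|q_1y|\}/|q_1s_0|$ for $x\in\geod{q_1s_0}$, $y\in\geod{q_1u}$. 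This handles both of your ``delicacies'' (chaining across $\kappa$-domains, and the geodesic not leaving a neighbourhood of $\mathcal U$) simultaneously, without any weighted curvature bookkeeping.

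Concretely, the paper does not vary both endpoints as you do; it first uses weak convexity to produce a single $\kappa$-geodesic $\geod{q_1\bar s}\subset\mathcal U$ with $q_1\in B_\epsilon(q)$, $\bar s\in B_\epsilon(s)$, fixes $q_1$, and only varies the far endpoint $s_1$ in a small ball around a point $s_0\in\geod{q_1\bar s}$. Assuming for contradiction that every such geodesic $\geod{q_1s_1}$ has a positive-$\mathcal H^1$ set of points not connectable to $p_1$, the spreading estimate makes the map $x\mapsto(\bar x,|q_1x|)$ (radial projection onto a fixed sphere, together with radius) co-Lipschitz on the bad set $T$ once one stays $\delta$-away from $q_1$. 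Fubini on the $(n-1)$-sphere times the radius then gives $\mathcal H^n(T)>0$, contradicting $\mathcal H^n(\mathcal U\setminus\cnnt{p_1}{\mathcal U})=0$.

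Your pushforward formulation $F_\ast\ll\mathcal H^n$ is morally the same statement, but note that Lipschitz continuity of $F$ alone does not give it (a constant map is Lipschitz); you need the lower Jacobian bound, which in turn requires anchoring the argument to a fixed $\kappa$-geodesic so that the Section~2 lemmas apply. Once you make that correction, your argument converges to the paper's.
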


These results provide some connections between the global geometry and the local geometry via a probability of convexity, which may be used to attack a long-standing conjecture mentioned in M. Gromov¡¯s book \cite{Gromov-SaGMoC}: 

\begin{conj*} If $X\in\Alexnk$ has no boundary, then
a convex hypersurface in $X$ equipped with the intrinsic metric is an Alexandrov
space with the same lower curvature bound.
\end{conj*}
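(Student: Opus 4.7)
The plan is to deduce the Corollary from Theorem A by showing that, for a local Alexandrov space, weak a.e.-convexity implies weak $\mathfrak p_1$-convexity; one then applies Theorem A directly.

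First I would fix $p, q, s \in \mathcal U$ and $\epsilon > 0$ and apply the weak a.e.-convexity hypothesis twice to produce $p_1 \in B_\epsilon(p)$ and $q_1 \in B_\epsilon(q)$ with
$$\mathcal H^n\bigl(\mathcal U \setminus \cnnt{p_1}{\mathcal U}\bigr) = 0 = \mathcal H^n\bigl(\mathcal U \setminus \cnnt{q_1}{\mathcal U}\bigr).$$
Writing $B := \mathcal U \setminus \cnnt{p_1}{\mathcal U}$, the task is then to exhibit $s_1 \in B_\epsilon(s)$ and a geodesic $\geod{q_1 s_1} \subset \bar{\mathcal U}$ with $\mathcal H^1\bigl(\geod{q_1 s_1} \cap B\bigr) = 0$, which automatically yields $\mathbf{Pr}(p_1 \prec \geod{q_1 s_1}) = 1 > 1 - \epsilon$, proving weak $\mathfrak p_1$-convexity.

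The key step is a Fubini-type argument in polar coordinates at $q_1$. Using that $\mathcal U$ is locally Alexandrov, I would work with the exponential map $\exp_{q_1} \colon \Sigma_{q_1} \times \mathbb R_+ \to \mathcal U$ defined on the open set of $(v,t)$ for which the radial geodesic in direction $v$ stays in $\mathcal U$ up to time $t$, and invoke the local Bishop--Gromov polar decomposition $d\mathcal H^n = J(v,t)\, dt\, d\mathcal H^{n-1}(v)$ with $J > 0$ on its domain. Since $\mathcal H^n(B) = 0$, Fubini then gives that for $\mathcal H^{n-1}$-a.e.\ $v \in \Sigma_{q_1}$, the radial geodesic from $q_1$ in direction $v$ meets $B$ in an $\mathcal H^1$-null set. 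Because $\cnnt{q_1}{\mathcal U}$ has full $\mathcal H^n$-measure, I can pick $s'' \in B_{\epsilon/2}(s) \cap \cnnt{q_1}{\mathcal U}$ and let $v_0$ be the initial direction of the geodesic $\geod{q_1 s''}$; perturbing $v_0$ to a ``good'' direction $v_1$ (which is possible because the bad directions form an $\mathcal H^{n-1}$-null subset of $\Sigma_{q_1}$) and setting $s_1 := \exp_{q_1}\bigl(|q_1 s''|\, v_1\bigr)$ produces, by continuity of the exponential map on its domain, an endpoint with $s_1 \in B_\epsilon(s)$ and $\mathcal H^1\bigl(\geod{q_1 s_1} \cap B\bigr) = 0$.

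The main obstacle will be justifying the polar volume decomposition and the Fubini reduction in the local (non-complete) setting: one must verify that the classical Bishop--Gromov inequality and the polar change of variables remain valid on the open subdomain of $\exp_{q_1}$ lying inside $\mathcal U$, rather than only in a complete global Alexandrov space. I expect this to follow from the local nature of the curvature comparison together with a gluing argument along radial geodesics, likely aided by the weighted Alexandrov's lemma advertised in the abstract. Once these technicalities are in place, weak $\mathfrak p_1$-convexity is established and Theorem A completes the proof.
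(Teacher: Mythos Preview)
You have misidentified the target statement. The displayed statement is the \emph{Conjecture} about convex hypersurfaces in boundaryless Alexandrov spaces, which the paper explicitly presents as a long-standing \emph{open problem} from Gromov's book; the paper does not prove it, and neither does your proposal. Your argument is instead aimed at Corollary~\ref{cor.ThmA} (weak a.e.-convexity $\Rightarrow$ $\bar{\mathcal U}\in\Alex(\kappa)$), which is a different statement. Nothing in your outline addresses why a convex hypersurface with its intrinsic metric should lie in $\Alex_{loc}(\kappa)$ or satisfy any of the convexity hypotheses needed to invoke Theorem~A, so as a proof of the Conjecture the proposal is vacuous.

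If your intent was in fact to prove Corollary~\ref{cor.ThmA}, then your strategy---reduce to weak $\mathfrak p_1$-convexity via a Fubini/polar-coordinate argument at $q_1$---is in the same spirit as the paper's, but the paper deliberately avoids the exponential map machinery you invoke. In a merely local Alexandrov space $\mathcal U$ there is no globally defined $\exp_{q_1}$, no a~priori reason the radial geodesics you need stay inside $\mathcal U$, and no established Bishop--Gromov polar decomposition on which to run Fubini; your own proposal flags this as ``the main obstacle'' without resolving it. The paper sidesteps all of this: it first fixes a $\kappa$-geodesic $\geod{q_1\bar s}\subset\mathcal U$, then varies only the endpoint $s_1$ in a small ball around an interior point $s_0$, and uses the thin-triangle comparisons of Section~2 to build an explicit co-Lipschitz map $\phi\colon T\to S\times\mathbb R^+$ from the ``bad'' set into a product, so that Fubini applies in an elementary way and forces $\mathcal H^n(T)>0$, contradicting $\mathcal H^n(\mathcal U\setminus\cnnt{p_1}{\mathcal U})=0$. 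This buys the paper a proof that works entirely within the local comparison framework already developed, with no appeal to an exponential map or volume form.
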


 The answers to the following two questions remain unclear to the author.
 \begin{itemize}
   \item{\it Is Theorem A true or false if $\mathcal U$ is weakly $\mathfrak p_\lambda$-convex for some $\lambda\in(0,1)$?}
   \item {\it Is Theorem B true or false if $\mathfrak p_\lambda$-convexity is replaced by weak convexity?}
 \end{itemize}
A negative answer to the second question may provide some new examples for Alexandrov spaces.

%We would like to point out that the metric completion $X$ may have much larger measure than $\mathcal U$, even when $X\in\Alex(\kappa)$ (see Example \ref{eg.rational}).

%Using a similar technique, we can improve the curvature bound if we know that $X$ is an Alexandrov space in advance.
%
%\begin{thmB}
%  Suppose that $\mathcal U$ is locally curvature bounded from below by $\kappa$. Let $\mathcal E_p$ be defined as in (\ref{intro.e1}). If for every $p\in \mathcal U$, weakly $\mathfrak p_{\mathcal E_p}(x)\ge\lambda>0$ for almost every $x$ and $X\in\Alex^n(\kappa_1)$ for some $\kappa_1<\kappa$, then $X\in\Alexnk$.
%\end{thmB}

The main issue to adapt the proof from the complete case $\bar{\mathcal U}=\mathcal U$ to the incomplete case $\bar{\mathcal U}\neq\mathcal U$ is that there is not a priori uniform lower bound for the size of $\kappa$-domains in a bounded subset of $\mathcal U$. Our proof is divided into four steps. In Section 1 we establish the key tool ``weighted Alexandrov's lemma". In Section 2 we recall and prove some comparison results for thin triangles near a geodesic which can be covered by $\kappa$-domains. In Section 3, the probability condition and the
weighted Alexandrov's lemma are used to prove a global comparison through a combination of thin triangles. We complete the proof in Section 4 and give some examples in Section 5.

%One of the main ingredients in our proof is the weighted Alexandrov's lemma (Lemmas \ref{tri.comp.2.2} and \ref{triangle.comp.mult}), as a generalization of the classical Alexandrov's lemma. \red{explain idea}

%$L=\int_0^T\sqrt{(dx)^2+(dy)^2}+\chi_{\mathcal C_\rho\times\mathcal C_\rho}dy$.

%3. Apply the Fubini Theorem on the density function $d_{p'}(\Gamma_x):B_\delta(p)\times\geod{qs}:[0,1]$. There is a point $p'\in B_\delta(p)$ so that $\nu_L\{x\in\geod{qs}:d_{p'}(\Gamma_x)\ge\lambda\}$

%\begin{thmA}\label{quasi-global}
%  Suppose that $\mathcal U$ is weakly convex and locally curvature bounded from below by $\kappa$. Then the completion $X$ is an Alexandrov space with curvature $\ge\kappa$.
%\end{thmA}
%
%\begin{proof}
%
%
%  2. For any $p\in \mathcal U$, $\geod{qs}\subset \mathcal U$ and $\delta>0$, there is $p'\in B_\delta(p)$ so that the density $d_{p'}(\Gamma_z)=1$ for almost every $z\in \geod{qs}$.
%\end{proof}

The author would like to thank Dmitri Burago and Anton Petrunin for helpful discussions.

%\newpage

\bigskip

\begin{center} {\bf Notation and conventions} \end{center}

\begin{itemize}
  \item $\mathbb M_\kappa^n$ -- the $n$-dimensional space form with constant curvature $\kappa$.
      \smallskip
  \item $\dist{p,q}$ or $|pq|$ -- the distance between points $p$ and $q$.
      \smallskip
%  \item $\Alex(\kappa)$ ----- The collection of Alexandrov spaces with curvature bounded from below by $\kappa$.
%      \smallskip
  \item $\geod{pq}$ -- a minimal geodesic connecting points $p$ and $q$ if it exists. Once it appears, it will always mean the same geodesic in the same context. For simplicity, we let $\geodic{pq}=\geod{pq}\setminus\{p\}$, $\geodci{pq}=\geod{pq}\setminus\{q\}$ and $\geodii{pq}=\geod{pq}\setminus\{p,q\}$.
      \smallskip
  \item $\tang{\kappa}qps$ -- the angle at $q$ of the model triangle $\tilde\triangle_\kappa pqs$, where $\tilde\triangle_\kappa pqs$ is a geodesic triangle in $\mathbb M_\kappa^2$ with the same lengths of sides as $\triangle pqs$.
      \smallskip
  \item $\ang{q}{p}{s}$ -- the angle at $q$ between geodesics $\geod{qp}$ and $\geod{qs}$.
      \smallskip
%  \item $\curlyvee^\kappa\left\{\ang{q}{p}{s}, |qp|, |qs|\right\}$ ----- the length of the geodesic $\geod{\tilde p\tilde s}_{\mathbb M_\kappa^2}$, where triangle $\triangle\tilde p\tilde q\tilde s\subset\mathbb M_\kappa^2$ satisfies $\ang{\tilde p}{\tilde q}{\tilde s}=\ang pqs$, $|\tilde q\tilde p|=|qp|$ and $|\tilde q\tilde s|=|qs|$.
  \item $\psi(\epsilon\mid\delta)$ -- a positive function in $\epsilon$ and $\delta$ which satisfies $\dsp\lim_{\epsilon\to 0^+}\psi(\epsilon\mid\delta)=0$.
\end{itemize}

\section{Weighted Alexandrov's lemma}

Alexandrov's lemma plays an important role in the classical globalization theorem. It is used to combine two small comparison triangles. In our case, we need to combine multiple triangles whose comparison curvatures are not necessarily the same.  In this section, $X$ is a general length metric space. We assume all triangles are contained in a ball of radius $\frac{\pi}{10\sqrt\kappa}$ in the case $\kappa>0$. Recall the Alexandrov's lemma.

\begin{lem}[\cite{AKP} Alexandrov's lemma]\label{Alex.lem}
  Let $p,q,s$ and $x\in\geodii{qs}$ be points in $X$. Then $\tang{\kappa}{q}{p}{x}\ge\tang{\kappa}{q}{p}{s}$ if and only if $\tang{\kappa}{x}{p}{q}+\tang{\kappa}{x}{p}{s}\le\pi$.
\end{lem}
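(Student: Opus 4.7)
The plan is to reduce both directions to a single picture in the model plane $\mathbb M_\kappa^2$. First, I would realize the two comparison triangles $\tilde\triangle_\kappa pqx$ and $\tilde\triangle_\kappa pxs$ glued along a common edge $[\tilde p\tilde x]$, with the remaining vertices $\tilde q$ and $\tilde s$ placed on opposite sides of the line through $\tilde p$ and $\tilde x$. By construction, the two angles at $\tilde x$ are exactly $\tang{\kappa}{x}{p}{q}$ and $\tang{\kappa}{x}{p}{s}$, and the angle at $\tilde q$ in the first triangle equals $\tang{\kappa}{q}{p}{x}$.

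I would then extend $[\tilde q\tilde x]$ past $\tilde x$ to a point $\tilde s'$ with $|\tilde x\tilde s'|=|xs|$. The triangle $\tilde p\tilde q\tilde s'$ has sides $|pq|$ and $|\tilde q\tilde s'|=|qx|+|xs|=|qs|$, matching two sides of the honest model triangle $\tilde\triangle_\kappa pqs$. Because $\tilde s'$ lies on the ray $[\tilde q\tilde x)$, the angle at $\tilde q$ inside $\tilde p\tilde q\tilde s'$ is still $\tang{\kappa}{q}{p}{x}$, while the angle at $\tilde q$ in $\tilde\triangle_\kappa pqs$ is $\tang{\kappa}{q}{p}{s}$. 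Monotonicity of the $\mathbb M_\kappa^2$ law of cosines (opposite side increases with the enclosed angle, with the other two sides fixed) therefore yields
\[
\tang{\kappa}{q}{p}{x}\ge\tang{\kappa}{q}{p}{s}\;\Longleftrightarrow\;|\tilde p\tilde s'|\ge|ps|.
\]
For the second link, the triangles $\tilde p\tilde x\tilde s'$ and $\tilde p\tilde x\tilde s$ share the sides $|\tilde p\tilde x|=|px|$ and $|xs|$, while their angles at $\tilde x$ are $\pi-\tang{\kappa}{x}{p}{q}$ (since $\tilde q,\tilde x,\tilde s'$ are collinear) and $\tang{\kappa}{x}{p}{s}$ respectively. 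The same monotonicity principle gives
\[
|\tilde p\tilde s'|\ge|\tilde p\tilde s|=|ps|\;\Longleftrightarrow\;\pi-\tang{\kappa}{x}{p}{q}\ge\tang{\kappa}{x}{p}{s}.
\]
Chaining the two equivalences produces exactly the claimed biconditional.

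The argument itself is entirely mechanical; the only point that needs care is certifying that every auxiliary triangle in $\mathbb M_\kappa^2$ is uniquely realized up to reflection and lies in a region where the law of cosines is \emph{strictly} monotone in the free parameter. This is precisely where the section's standing hypothesis that triangles fit inside a ball of radius $\pi/(10\sqrt\kappa)$ when $\kappa>0$ gets used, and I expect this bookkeeping, together with verifying the triangle inequalities needed to build $\tilde\triangle_\kappa pqx$ and $\tilde\triangle_\kappa pxs$, to be the only nuisance in turning the sketch into a formal write-up.
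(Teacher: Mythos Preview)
Your argument is the standard proof of Alexandrov's lemma and is correct; the glued-triangle picture with the auxiliary point $\tilde s'$ on the ray through $\tilde q$ and $\tilde x$, together with monotonicity of the law of cosines in $\mathbb M_\kappa^2$, yields both implications cleanly, and your remark about the size restriction when $\kappa>0$ handles the only subtlety. Note that the paper does not supply its own proof of this lemma but simply cites \cite{AKP}, so there is nothing to compare against beyond observing that your sketch is essentially the classical argument one finds there.
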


By a direct computation, we have the following two weighted Alexandrov's lemmas in implicit forms.

 \begin{lem}\label{tri.comp.2.2-im} Let $p,q,s$ and $x\in\geodii{qs}$ be points in $X$. Suppose that $|qx|=b$ and $|xs|=d$. If there are $\kappa_1, \kappa_2\in\mathbb R$ such that \begin{align}
   \tang{\kappa_1}{x}{p}{q}+\tang{\kappa_2}{x}{p}{s}\le\pi,
   \label{tri.comp.2.2-e0}
 \end{align}
  then there is $\lambda \in(0,1)$, %=\lambda(\kappa_1,\kappa_2,|pq|,|px|,|ps|,|qx|,|sx|,|pq|)
  such that for $\bar\kappa=(1-\lambda)\kappa_1+\lambda\kappa_2$, we have
  \begin{align} \tang{\kappa_1}{q}{p}{x}\ge\tang{\bar\kappa}{q}{p}{s}.
  \end{align}
 \end{lem}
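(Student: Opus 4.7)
The plan is to reduce the weighted statement to the classical Alexandrov lemma applied at $\kappa^*:=\min(\kappa_1,\kappa_2)$, and then to push the convex-combination parameter off the endpoint into $(0,1)$ by continuity. The two underlying facts I would invoke first are that, for a fixed non-degenerate triple of side lengths in the admissible size regime, $\kappa\mapsto\tang{\kappa}{a}{b}{c}$ is continuous and strictly increasing (a routine consequence of the $\mathbb M_\kappa^2$ law of cosines), and that the ``iff'' in Lemma~\ref{Alex.lem} upgrades to strict inequalities on both sides.

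Setting $\kappa^*=\min(\kappa_1,\kappa_2)$, monotonicity yields $\tang{\kappa^*}{x}{p}{q}\le\tang{\kappa_1}{x}{p}{q}$ and $\tang{\kappa^*}{x}{p}{s}\le\tang{\kappa_2}{x}{p}{s}$, so hypothesis (\ref{tri.comp.2.2-e0}) gives $\tang{\kappa^*}{x}{p}{q}+\tang{\kappa^*}{x}{p}{s}\le\pi$. Lemma~\ref{Alex.lem} at curvature $\kappa^*$ then gives $\tang{\kappa^*}{q}{p}{x}\ge\tang{\kappa^*}{q}{p}{s}$, and applying monotonicity once more at the $q$-vertex produces
\[
\tang{\kappa_1}{q}{p}{x}\ \ge\ \tang{\kappa^*}{q}{p}{x}\ \ge\ \tang{\kappa^*}{q}{p}{s},
\]
which is the desired conclusion at $\bar\kappa=\kappa^*$ (corresponding to the endpoint $\lambda=0$ when $\kappa_1\le\kappa_2$, and $\lambda=1$ when $\kappa_2<\kappa_1$).

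To upgrade to a strictly interior $\lambda$: if $\kappa_1=\kappa_2$ the conclusion is independent of $\lambda$; otherwise, in the non-degenerate case at least one inequality in the chain is strict. When $\kappa_1>\kappa^*$, strict monotonicity at the $q$-vertex makes the first step strict. When $\kappa^*=\kappa_1<\kappa_2$, strict monotonicity at the $x$-vertex forces $\tang{\kappa^*}{x}{p}{q}+\tang{\kappa^*}{x}{p}{s}<\pi$, so the strict form of Lemma~\ref{Alex.lem} makes the second step strict. In either subcase $\tang{\kappa_1}{q}{p}{x}>\tang{\kappa^*}{q}{p}{s}$, and continuity of $\bar\kappa\mapsto\tang{\bar\kappa}{q}{p}{s}$ gives a one-sided neighborhood of $\kappa^*$ inside the interval bounded by $\kappa_1,\kappa_2$ on which the strict inequality persists; any $\bar\kappa$ in its interior supplies a valid $\lambda\in(0,1)$.

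The main items to handle carefully are the strictness bookkeeping just outlined, and the trivial degenerate configurations in which $p$ is collinear with $q,x,s$; in the degenerate cases all the relevant model angles take values in $\{0,\pi\}$ independently of $\kappa$, so the conclusion holds for every $\lambda$. Everything else is a one-variable continuity/monotonicity argument on top of Lemma~\ref{Alex.lem}.
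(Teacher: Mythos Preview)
Your argument is correct and genuinely different from the paper's. The paper obtains Lemma~\ref{tri.comp.2.2-im} ``by a direct computation'' with the $\kappa$-law of cosines (the method made explicit in Lemma~\ref{tri.comp.2.2}), which produces a specific weight $\lambda$ depending on $b,d$ and the side lengths. You instead reduce everything to the classical Alexandrov lemma at $\kappa^*=\min(\kappa_1,\kappa_2)$ together with the monotonicity of $\kappa\mapsto\tang{\kappa}{\cdot}{\cdot}{\cdot}$, and then perturb off the endpoint by continuity. This is more elementary and avoids any cosine-law expansion, at the cost of giving no quantitative control on $\lambda$; the paper's computation, by contrast, yields effective weights (cf.\ the formula in Lemma~\ref{tri.comp.2.2} and the remark following it), which is what is actually needed later in Corollary~\ref{cor.mult.Alex} and Lemma~\ref{comp.X1}.

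One caveat: your treatment of degenerate configurations is a little brisk. ``All the relevant model angles take values in $\{0,\pi\}$'' is not quite the full story, since one of the triangles $\triangle pqx$, $\triangle pxs$ could be degenerate while $\triangle pqs$ is not. The case split still closes---if $\tang{\kappa^*}{q}{p}{x}=\pi$ the conclusion is trivial, while $\tang{\kappa^*}{q}{p}{x}=0$ forces $\tang{\kappa^*}{q}{p}{s}=0$ via Alexandrov's lemma and hence forces $\triangle pqs$ to be degenerate as well---but it is worth spelling out rather than waving through.
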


\begin{lem}\label{triangle.comp.mult.im}
  Let $p,q,s$ and $x_i, i=1,2,\dots,N$ be points in $X$, where $x_0=q$, $x_N=s$ and $x_i\in\geodii{qs}$ appear in the same order. If there are $\kappa_i\in\mathbb R$, such that
  \begin{align}
    \tang{\kappa_i}{x_i}p{x_{i-1}}+\tang{\kappa_{i+1}}{x_i}p{x_{i+1}}\le\pi,
  \end{align}
  $i=1,2,\dots,N-1$, then there is $$\bar\kappa=\min_i\{\kappa_i\} +\psi\left(\max_{i,j}\{|k_i-k_j|\}\right.\left|\; \min_i\{|px_i|\},|qs|\right)$$
  such that
  \begin{align}
    \tang{\kappa_1}{q}p{x_1}\ge\tang{\bar\kappa}{q}p{s}.
  \end{align} %Moreover, if $\kappa_{2j-1}=\kappa$ and $\kappa_{2j}=\kappa^*$, $j=1,2,\dots,[N/2]$, then $\bar\kappa\to\kappa$, as  $\dsp\sum_{j=1}^{[N/2]}c_{2j}\to0$.
\end{lem}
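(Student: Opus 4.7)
The plan is to prove Lemma \ref{triangle.comp.mult.im} by induction on $N$, with Lemma \ref{tri.comp.2.2-im} serving as the basic merging step. The base case $N=2$ is exactly that lemma (with trivial error $\psi=0$). For the inductive step from $N-1$ to $N$, the strategy is to collapse the two leftmost subsegments $\geod{q\,x_1}$ and $\geod{x_1\,x_2}$ into a single effective segment $\geod{q\,x_2}$, thereby reducing the chain to one of length $N-1$.

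Applying Lemma \ref{tri.comp.2.2-im} at $x_1$ produces some $\bar\kappa_a\in[\min(\kappa_1,\kappa_2),\,\max(\kappa_1,\kappa_2)]$ with
$$\tang{\kappa_1}{q}{p}{x_1}\ge\tang{\bar\kappa_a}{q}{p}{x_2}.$$
Applying the same lemma at $x_1$ with the roles of $q,\kappa_1$ and $x_2,\kappa_2$ exchanged yields $\bar\kappa_b$ in the same interval with
$$\tang{\kappa_2}{x_2}{p}{x_1}\ge\tang{\bar\kappa_b}{x_2}{p}{q}.$$
Since the model angle $\tang{\kappa}{\cdot}{\cdot}{\cdot}$ is non-decreasing in $\kappa$ for fixed side lengths, setting $\bar\kappa_{12}=\min(\bar\kappa_a,\bar\kappa_b)$ preserves both inequalities. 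The crucial consequence is that the angle-sum hypothesis still holds at $x_2$ for the collapsed chain:
$$\tang{\bar\kappa_{12}}{x_2}{p}{q}+\tang{\kappa_3}{x_2}{p}{x_3}\le\tang{\kappa_2}{x_2}{p}{x_1}+\tang{\kappa_3}{x_2}{p}{x_3}\le\pi.$$
Hence the shortened sequence $q,x_2,x_3,\dots,x_N$ with curvatures $\bar\kappa_{12},\kappa_3,\dots,\kappa_N$ meets all the hypotheses with only $N-1$ subsegments. The inductive hypothesis then yields $\bar\kappa$ satisfying $\tang{\bar\kappa_{12}}{q}{p}{x_2}\ge\tang{\bar\kappa}{q}{p}{s}$, and chaining with the first displayed inequality gives the desired $\tang{\kappa_1}{q}{p}{x_1}\ge\tang{\bar\kappa}{q}{p}{s}$.

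To pin down the form $\bar\kappa=\min_i\kappa_i+\psi$, observe that at every merging step the effective curvature stays inside $[\min_i\kappa_i,\max_i\kappa_i]$, which gives the crude bound $\bar\kappa-\min_i\kappa_i\le\max_{i,j}|\kappa_i-\kappa_j|$. To upgrade this to the refined dependence $\psi(\max_{i,j}|\kappa_i-\kappa_j|\mid\min_i|px_i|,|qs|)$ claimed in the statement, I will invoke uniform continuity of the model-angle function $\kappa\mapsto\tang{\kappa}{q}{p}{s}$ on the compact family of triangles whose vertices lie at distance $\ge\min_i|px_i|$ from $p$ and whose diameter is at most $|qs|$ (and, if $\kappa>0$, at most $\pi/(10\sqrt\kappa)$, by the standing assumption of the section). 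The main obstacle is precisely this quantitative bookkeeping: the convex-combination parameters $\lambda$ produced by each application of Lemma \ref{tri.comp.2.2-im} depend implicitly on the individual subtriangles, and the geometric parameters $\min_i|px_i|$ and $|qs|$ in $\psi$ are exactly what prevent the accumulated error from blowing up as intermediate sides or heights degenerate.
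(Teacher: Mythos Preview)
Your inductive argument is sound. The paper does not spell out a proof of Lemma~\ref{triangle.comp.mult.im}; it only says the two implicit lemmas follow ``by a direct computation,'' and the details it actually gives are for the explicit quantitative versions (Lemmas~\ref{tri.comp.2.2} and~\ref{triangle.comp.mult}), where the cosine laws for all adjacent subtriangles are Taylor-expanded and summed simultaneously rather than merged one pair at a time. Your induction, using Lemma~\ref{tri.comp.2.2-im} both forward and in the reversed orientation at $x_1$ to collapse the first two segments and recover the angle-sum hypothesis at $x_2$, is a genuinely different and more modular route. The trade-off is that you do not obtain the explicit weighted-average formula~(\ref{triangle.comp.mult.e1}) for $\bar\kappa$ that the paper's direct computation produces; for the implicit statement here that is harmless.

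One correction to your last paragraph: the ``crude bound'' $\bar\kappa-\min_i\kappa_i\le\max_{i,j}|\kappa_i-\kappa_j|$ that already falls out of the induction \emph{is} of the required form. The paper's convention for $\psi(\epsilon\mid\delta)$ only asks for a positive function tending to $0$ as $\epsilon\to 0$ with $\delta$ fixed; the linear bound $\psi(\epsilon\mid\delta)=\epsilon$, which happens not to depend on $\delta$ at all, is a perfectly valid instance. So no ``upgrade'' via uniform continuity of $\kappa\mapsto\tang{\kappa}{q}{p}{s}$ is needed, and in any case that continuity controls the angle rather than $\bar\kappa$ itself, so it would not directly deliver the stated conclusion.
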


When $|qs|$ is small enough, depending on $|pq|$ and $\kappa_i$, Taylor expansions can be used to get an effective weighted Alexandrov's lemma. Let
  $$\snk(t)=\left\{
      \begin{array}{ll}
        \frac{\sin(\sqrt\kappa t)}{\sqrt\kappa}, & \hbox{$\kappa>0$,} \\
        t, & \hbox{$\kappa=0$,} \\
        \frac{\sinh(\sqrt{-\kappa} t)}{\sqrt{-\kappa}}, & \hbox{$\kappa<0$,}
      \end{array}
    \right.
    \quad \quad
    \cs_\kappa(t)=\snk'(t)
    =\left\{
      \begin{array}{ll}
        \cos(\sqrt\kappa t), & \hbox{$\kappa>0$,} \\
        1, & \hbox{$\kappa=0$,} \\
        \cosh(\sqrt{-\kappa} t), & \hbox{$\kappa<0$,}
      \end{array}
    \right.
  $$
  and
  $$\md_\kappa(t)=\int_0^t\snk(a)\,da
    =\left\{
      \begin{array}{ll}
        \frac{1-\cos(\sqrt\kappa t)}{\kappa}, & \hbox{$\kappa>0$,} \\
        \frac{t^2}{2}, & \hbox{$\kappa=0$,} \\
        \frac{1-\cosh(\sqrt{-\kappa} t)}{\kappa}, & \hbox{$\kappa<0$.}
      \end{array}
    \right.
  $$
For any $\kappa\in\mathbb R$ and $\triangle_\kappa ABC\subset\mathbb M_\kappa^2$, the $\kappa$-cosine law can be written in the following form
  \begin{align}
    \md_\kappa(|BC|)
    &=\md_\kappa(|AB|)+\md_\kappa(|AC|)
    -\kappa\cdot\md_\kappa(|AB|)\md_\kappa(|AC|)
    \notag
    \\
    &-\sn_\kappa(|AB|)\sn_\kappa(|AC|)\cos\ang{A}{B}{C}.
    \label{cos.law}
  \end{align}
  Let $|AB|=c$, $|AC|=b$ and $|BC|=a$. The Taylor expansion of $|BC|$ at $b=0$ is
  \begin{align}
    a=c-b\cos(\ang ABC)+\frac12\sin^2(\ang ABC) \frac{\cs_\kappa(c)}{\snk(c)}\cdot b^2+O\left(b^3\right).
    \label{tri.comp.2.e1}
  \end{align}
  The second order term in the expansion gives an indication of the curvature. The coefficient of the
  third term depends on both $c$ and $\kappa$. Let $f_c(\kappa)=\frac{\cs_\kappa(c)}{\snk(c)}$ and view it as a function in $\kappa$. Sometimes we write $f_c(\kappa)$ as $f(\kappa)$ if $c$ is relatively fixed. Direct computation shows that $f(\kappa)$ is a $C^2$, concave and strictly decreasing function for $\kappa\in\left(-\infty,\; \left(\frac{2\pi}{c}\right)^2\right)$.

\begin{lem}\label{tri.comp.2.2}
Let the assumption be the same as in Lemma \ref{tri.comp.2.2-im}. Let  $a=|pq|$ and $\bar\kappa\in\mathbb R$ such that
  \begin{align}
    f_a(\bar\kappa)=\frac{(b^2+2bd)f_a(\kappa_1)+d^2f_a(\kappa_2) }{(b+d)^2}.
    \label{tri.comp.2.2.e0}
  \end{align}
  There is $\delta=\delta(a,\kappa_1,\kappa_2)>0$ so that if $|qs|<\delta$
  then
  \begin{align}
    \tang{\kappa_1}{q}{p}{x}\ge\tang{\bar\kappa}{q}{p}{s}.
    \label{tri.comp.2.2.e00}
  \end{align}
 \end{lem}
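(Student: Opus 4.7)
The strategy is direct Taylor expansion using (\ref{tri.comp.2.e1}). Set $a=|pq|$, $b=|qx|$, $d=|xs|$, $c=|px|$, $e=|ps|$, and let $\alpha=\tang{\kappa_1}{x}{p}{q}$, $\beta=\tang{\kappa_2}{x}{p}{s}$, $\gamma_1=\tang{\kappa_1}{q}{p}{x}$, $\gamma=\tang{\bar\kappa}{q}{p}{s}$, $\nu=\cos\alpha+\cos\beta$. The hypothesis $\alpha+\beta\le\pi$ translates into $\nu\ge 0$. The plan is to expand $e=|ps|$ twice, once via the broken chain $q\to x\to s$ at curvatures $\kappa_1,\kappa_2$ and once via the straight chain $q\to s$ at curvature $\bar\kappa$, and to observe that the weighting in (\ref{tri.comp.2.2.e0}) is exactly what cancels the second-order discrepancy between them.

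First I apply (\ref{tri.comp.2.e1}) to $\tilde\triangle_{\kappa_1}pqx$ at both vertices $q$ and $x$ (each with short edge $b$ emanating from the expansion point) and add the two identities. Using $c=a+O(b)$ this yields the near-degeneration relations $\cos\alpha=-\cos\gamma_1+b\sin^2\gamma_1\,f_a(\kappa_1)+O(b^2)$ and $\sin\alpha=\sin\gamma_1+O(b)$. Next, applying (\ref{tri.comp.2.e1}) to $\tilde\triangle_{\kappa_2}pxs$ at $x$ to expand $e$ in the short edge $d$, substituting the expansion of $c$ coming from $\tilde\triangle_{\kappa_1}pqx$ at $q$, writing $-\cos\beta=\cos\alpha-\nu$, and replacing $f_c(\kappa_2)$ by $f_a(\kappa_2)+O(b)$, I obtain the broken-chain expansion
\begin{equation*}
e = a - (b+d)\cos\gamma_1 - d\nu\,(1+O(b+d+\nu)) + \tfrac{1}{2}\sin^2\gamma_1\bigl[(b^2+2bd)f_a(\kappa_1)+d^2 f_a(\kappa_2)\bigr] + O((b+d)^3).
\end{equation*}
On the other hand, (\ref{tri.comp.2.e1}) applied to $\tilde\triangle_{\bar\kappa}pqs$ at $q$ gives
\begin{equation*}
e = a - (b+d)\cos\gamma + \tfrac{1}{2}\sin^2\gamma\,f_a(\bar\kappa)(b+d)^2 + O((b+d)^3).
\end{equation*}
The defining equation (\ref{tri.comp.2.2.e0}) of $\bar\kappa$ is precisely what makes the second-order coefficients of the two expansions agree; equating them therefore produces
\begin{equation*}
(b+d)(\cos\gamma-\cos\gamma_1) + \tfrac{1}{2}f_a(\bar\kappa)(b+d)^2(\sin^2\gamma_1-\sin^2\gamma) = d\nu\,(1+O(b+d+\nu)) + O((b+d)^3).
\end{equation*}
Linearizing in $\gamma_1-\gamma$ shows the left-hand side equals $(b+d)\sin\gamma\,(1+O(b+d))\,(\gamma_1-\gamma)$, so that the non-negativity of the right-hand side forces $\gamma_1\ge\gamma$ once $|qs|=b+d$ is smaller than a threshold $\delta=\delta(a,\kappa_1,\kappa_2)$ depending on the relevant bounds for $f_a(\kappa_i)$ and their derivatives.

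The main obstacle is the borderline case $\nu=0$ (equivalently $\alpha+\beta=\pi$), where the leading non-negative contribution $d\nu$ on the right vanishes and the sign must be extracted from the cubic remainder. This is handled by pushing the Taylor expansions one order further and exploiting the $C^2$-regularity and concavity of $\kappa\mapsto f_a(\kappa)$ on $(-\infty,(2\pi/a)^2)$ recorded immediately before the lemma; the specific weighting in (\ref{tri.comp.2.2.e0}) is distinguished precisely because it is the unique convex combination of $f_a(\kappa_1)$ and $f_a(\kappa_2)$ that realises the second-order matching between broken and straight chains, a property already visible in the classical case $\kappa_1=\kappa_2$ where (\ref{tri.comp.2.2.e0}) collapses to $\bar\kappa=\kappa_1$ and the standard Alexandrov lemma (Lemma \ref{Alex.lem}) recovers the conclusion exactly.
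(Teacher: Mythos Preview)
Your approach mirrors the paper's: both use the Taylor expansion (\ref{tri.comp.2.e1}) to expand $|ps|$ through the broken chain $q\to x\to s$ and compare with the direct $\bar\kappa$-expansion, the weighting in (\ref{tri.comp.2.2.e0}) being exactly what matches the second-order coefficients; the only cosmetic difference is that you carry $\nu=\cos\alpha+\cos\beta$ explicitly as an equality, whereas the paper absorbs $\nu\ge 0$ into an inequality for $|ps|$ at (\ref{tri.comp.2.2.e7})--(\ref{tri.comp.2.2.e8}) and then compares (\ref{tri.comp.2.2.e12}) with (\ref{tri.comp.2.2.e13}). One caveat on your last paragraph: the concavity of $\kappa\mapsto f_a(\kappa)$ controls how $\bar\kappa$ relates to convex combinations of the $\kappa_i$ (this is precisely what the Remark following the lemma exploits), not the sign of the cubic remainder in the distance expansion, so that invocation does not actually close the $\nu=0$ case---though the paper's own final step is equally terse and does not isolate this borderline either.
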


 \begin{remark}By the monotonicity and the concavity of $f$, we see that
 $$\bar\kappa
   \ge\frac{(b^2+2bd)\kappa_1+d^2\kappa_2}{(b+d)^2}
   \ge\min\left\{\kappa_1,  \; \frac{b^2\kappa_1+d^2\kappa_2}{b^2+d^2}\right\}.
 $$
 \end{remark}

% \begin{remark}
%   The condition $b+d<\delta$ can be removed by a more precise computation. The necessity is also true, that is, if $\tang{\kappa_1}{q}{p}{x}\ge\tang{\bar\kappa}{q}{p}{s}$ then $\tang{\kappa_1}{x}{p}{q}+\tang{\kappa_2}{x}{p}{s}\le\pi$.
% \end{remark}

 \begin{proof}[{\bf Proof of Lemma \ref{tri.comp.2.2}}]
  Not losing generality, assume $\ang qp{s}\neq 0,\pi$. Our goal is to combine $\triangle pqx$ and $\triangle pxs$ and get a globalized comparison. We start from finding a connection between $\tang{\kappa_1}{q}{p}{x}$ and $\tang{\kappa_2}{x}{p}{s}$, namely, (\ref{tri.comp.2.2.e8}) and (\ref{tri.comp.2.2.e9}). Let $|px|=c$. Applying (\ref{tri.comp.2.e1}) on $\triangle_{\kappa_1} \tilde p\tilde q\tilde x\subset \mathbb M_{\kappa_1}^2$, we get
  \begin{align}
    c&= a-b\cos\left(\tang{\kappa_1}{q}{p}{x}\right) +\frac12\sin^2\left(\tang{\kappa_1}{q}{p}{x}\right) f_a(\kappa_1) \cdot b^2+O(b^3)
  \label{tri.comp.2.2.e2}
    \\
    a&= c-b\cos\left(\tang{\kappa_1}{x}{p}{q}\right) +\frac12\sin^2\left(\tang{\kappa_1}{x}{p}{q}\right) f_c(\kappa_1) \cdot b^2+O(b^3)
  \label{tri.comp.2.2.e3}.
  \end{align}
  Adding up (\ref{tri.comp.2.2.e2}) and (\ref{tri.comp.2.2.e3}) and taking in account $|a-c|<b$, we get
  \begin{align}
    &\cos\left(\tang{\kappa_1}{q}{p}{x}\right)
    +\cos\left(\tang{\kappa_1}{x}{p}{q}\right)
    \notag \\
    &\qquad =\frac12\sin^2\left(\tang{\kappa_1}{q}{p}{x}\right) f_a(\kappa_1) \cdot b
    +\frac12\sin^2\left(\tang{\kappa_1}{x}{p}{q}\right) f_c(\kappa_1) \cdot b
    +O(b^2),
    \notag\\
    &\qquad =\frac12\left(\sin^2\left(\tang{\kappa_1}{q}{p}{x}\right)
    +\sin^2\left(\tang{\kappa_1}{x}{p}{q}\right)\right) f_a(\kappa_1) \cdot b
    +O(b^2).
  \label{tri.comp.2.2.e4}
  \end{align}
  Therefore,
  \begin{align}
    &\sin^2\left(\tang{\kappa_1}{x}{p}{q}\right)
    -\sin^2\left(\tang{\kappa_1}{q}{p}{x}\right)
    \notag\\
    &\qquad=\left(\cos\left(\tang{\kappa_1}{q}{p}{x}\right)
    +\cos\left(\tang{\kappa_1}{x}{p}{q}\right)\right) \left(\cos\left(\tang{\kappa_1}{q}{p}{x}\right)
    -\cos\left(\tang{\kappa_1}{x}{p}{q}\right)\right)
    \notag\\
    &\qquad\le 2f_a(\kappa_1) \cdot b
    +O(b^2).
    \label{tri.comp.2.2.e5}
  \end{align}
  Plugging (\ref{tri.comp.2.2.e5}) back into (\ref{tri.comp.2.2.e4}), we get
  \begin{align}
    \cos\left(\tang{\kappa_1}{q}{p}{x}\right)
    +\cos\left(\tang{\kappa_1}{x}{p}{q}\right)
    \le\sin^2\left(\tang{\kappa_1}{q}{p}{x}\right)
    f_a(\kappa_1) \cdot b
    +O(b^2).
  \label{tri.comp.2.2.e6}
  \end{align}

  The assumption
  \begin{align}
    \tang{\kappa_1}{x}{p}{q}+\tang{\kappa_2}{x}{p}{s}\le\pi,
    \label{tri.comp.2.2.e6.1}
  \end{align}
  implies
  \begin{align}
    0\le\cos\left(\tang{\kappa_1}{x}{p}{q}\right) +\cos\left(\tang{\kappa_2}{x}{p}{s}\right).
    \label{tri.comp.2.2.e7}
  \end{align}
  Summing (\ref{tri.comp.2.2.e6}) and (\ref{tri.comp.2.2.e7}):
  \begin{align}
    -\cos\left(\tang{\kappa_2}{x}{p}{s}\right)
    \le -\cos\left(\tang{\kappa_1}{q}{p}{x}\right)
    +\sin^2\left(\tang{\kappa_1}{q}{p}{x}\right)
    f_a(\kappa_1) \cdot b
    +O(b^2).
    \label{tri.comp.2.2.e8}
  \end{align}
  This also implies that
  \begin{align}
    &\sin^2\left(\tang{\kappa_2}{x}{p}{s}\right)
    -\sin^2\left(\tang{\kappa_1}{q}{p}{x}\right)
    \notag\\
    &\qquad\le \left(\cos\left(\tang{\kappa_1}{q}{p}{x}\right)
    -\cos\left(\tang{\kappa_2}{x}{p}{s}\right)\right) \left(\cos\left(\tang{\kappa_1}{q}{p}{x}\right)
    +\cos\left(\tang{\kappa_2}{x}{p}{s}\right)\right)
    \notag\\
    &\qquad\le 2f_a(\kappa_1) \cdot b
    +O(b^2).
    \label{tri.comp.2.2.e9}
  \end{align}
  Apply (\ref{tri.comp.2.e1}) again for $\triangle_{\kappa_2} \tilde p\tilde x\tilde s\subset \mathbb M_{\kappa_2}^2$ and by the property of $f$, \begin{align}
    |ps| &=c-d\cos\left(\tang{\kappa_2}{x}{p}{s}\right) +\frac12\sin^2\left(\tang{\kappa_2}{x}{p}{s}\right) f_c(\kappa_2)\cdot d^2+O(d^3)
    \notag
    \\
    &\le c-d\cos\left(\tang{\kappa_2}{x}{p}{s}\right) +\frac12\sin^2\left(\tang{\kappa_2}{x}{p}{s}\right) f_a(\kappa_2)\cdot d^2+O(d^3).
    \label{tri.comp.2.2.e10}
  \end{align}
  Plug (\ref{tri.comp.2.2.e8}) into (\ref{tri.comp.2.2.e10}),
  \begin{align}
    |ps|
    \le c&-d\cos\left(\tang{\kappa_1}{q}{p}{x}\right)
    +\sin^2\left(\tang{\kappa_1}{q}{p}{x}\right) f_a(\kappa_1) \cdot bd
    \notag\\
    &+\frac12\sin^2\left(\tang{\kappa_1}{q}{p}{x}\right) f_a(\kappa_2) \cdot d^2+O(b^2d)+O(bd^2)+O(d^3).
    \label{tri.comp.2.2.e11}
  \end{align}
  Substitute $c$ by (\ref{tri.comp.2.2.e2}),
  \begin{align}
    |ps|\le a
    &-(b+d)\cos\left(\tang{\kappa_1}{q}{p}{x}\right)
    \notag\\
    &+\frac12\sin^2\left(\tang{\kappa_1}{q}{p}{x}\right)\cdot
    \left(f_a(\kappa_1)\cdot(b^2+2bd)+f_a(\kappa_2)\cdot d^2\right)
    \notag\\
    &+O(b^3)+O(b^2d)+O(bd^2)+O(d^3)
    \notag\\
    \le a&-(b+d)\cos\left(\tang{\kappa_1}{q}{p}{x}\right)
    +\sin^2\left(\tang{\kappa_1}{q}{p}{x}\right)
    f_a(\bar\kappa)\cdot(b+d)^2+O((b+d)^3).
    \label{tri.comp.2.2.e12}
  \end{align}
  where $\bar\kappa$ satisfies $\dsp f_a(\bar\kappa)=\frac{(b^2+2bd)f_a(\kappa_1)+d^2f_a(\kappa_2) }{(b+d)^2}$. %Inequality (\ref{tri.comp.2.2.e12}) follows from the concavity of $f(\kappa)$.
 % Note that $\tang{\kappa_1}{q}{p}{x}\le\ang{q}{p}{s}$. We have
%  \begin{align}
%    |ps|\le a&-(b+d)\cos\ang{q}{p}{s}
%    +\sin^2\ang{q}{p}{s}
%    f_a(\bar\kappa)\cdot(b+d)^2+O((b+d)^3).
%    \label{tri.comp.2.2.e13}
%  \end{align}
Apply (\ref{tri.comp.2.e1}) for $\triangle_{\bar\kappa} \tilde p\tilde x\tilde s\subset \mathbb M_{\bar\kappa}^2$:
  \begin{align}
    &|ps|=a-(b+d)\cos\left(\tang{\bar\kappa}{q}{p}{s}\right)
    +\sin^2\left(\tang{\bar\kappa}{q}{p}{s}\right)
    f_a(\bar\kappa)\cdot(b+d)^2
    +O((b+d)^3).
    \label{tri.comp.2.2.e13}
  \end{align}
At last, we compare (\ref{tri.comp.2.2.e13}) with (\ref{tri.comp.2.2.e12}). Note that
  $$-\cos\left(\tang{\bar\kappa}{q}{p}{s}\right)
    +\cos\left(\tang{\kappa_1}{q}{p}{x}\right)
    =(b+d)\left(\tang{\bar\kappa}{q}{p}{s} -\tang{\kappa_1}{q}{p}{x}\right)+O((b+d)^2)
  $$
  and
  $$\sin\left(\tang{\bar\kappa}{q}{p}{s}\right)
    -\sin\left(\tang{\kappa_1}{q}{p}{x}\right)
    =\left(\tang{\bar\kappa}{q}{p}{s}
    -\tang{\kappa_1}{q}{p}{x}\right)+O(b+d).
  $$
  When $|qs|=b+d$ is small, we have
  $$\tang{\kappa_1}{q}{p}{x}\ge\tang{\bar\kappa}{q}{p}{s}.$$
 \end{proof}

We now generalize Lemma \ref{tri.comp.2.2} to the case of multiple triangles.

\begin{lem}\label{triangle.comp.mult} Let the assumption be the same as in Lemma \ref{triangle.comp.mult.im}. Let $c_i=|x_ix_{i+1}|$ and
\begin{align}
    \bar\kappa=\frac{\dsp\sum_{i=1}^N c_i^2\kappa_i
    +2\sum_{i=1}^{N-1} \left(c_{i+1}+\dots+c_N\right)c_i\kappa_i}
    {(c_1+c_2+\dots+c_N)^2}.
    \label{triangle.comp.mult.e1}
  \end{align}
  There is $\delta=\delta(|pq|, \kappa_1\dots,\kappa_N)>0$ so that if $|qs|<\delta$
  then
  \begin{align}
    \tang{\kappa_1}{q}{p}{x_2}\ge\tang{\bar\kappa}{q}{p}{s}.
    \label{triangle.comp.mult.e01}
  \end{align}
\end{lem}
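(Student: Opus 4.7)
My plan is to iterate Lemma \ref{tri.comp.2.2} along $\geod{qs}$ from right to left, combining $\triangle px_{N-1}x_N$ with $\triangle px_{N-2}x_{N-1}$ first, then absorbing the result into $\triangle px_{N-3}x_{N-2}$, and so on. The right-to-left direction is the crucial choice: after combining the triangles indexed $k{+}1,\ldots,N$ into a single model triangle with effective curvature $\tilde\kappa_{k+1,N}$, the conclusion of Lemma \ref{tri.comp.2.2} takes the form $\tang{\kappa_{k+1}}{x_k}{p}{x_{k+1}} \ge \tang{\tilde\kappa_{k+1,N}}{x_k}{p}{s}$, an inequality at the \emph{left} endpoint $x_k$ of the accumulated triangle. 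Inserted into the original hinge $\tang{\kappa_k}{x_k}{p}{x_{k-1}} + \tang{\kappa_{k+1}}{x_k}{p}{x_{k+1}} \le \pi$, this gives exactly the hinge at $x_k$ needed to apply Lemma \ref{tri.comp.2.2} one more time and absorb $\triangle px_{k-1}x_k$.

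Unwinding this recursion telescopically, after $N{-}1$ steps the effective curvature $\bar\kappa_{\mathrm{iter}}:=\tilde\kappa_{1,N}$ satisfies
\begin{equation*}
 L_N^{\,2}\, f_a(\bar\kappa_{\mathrm{iter}}) \;=\; \sum_{i=1}^N c_i^{\,2}\, f_a(\kappa_i) + 2\sum_{i=1}^{N-1} c_i\bigl(c_{i+1}+\cdots+c_N\bigr)\, f_a(\kappa_i),
\end{equation*}
where $L_N=c_1+\cdots+c_N=|qs|$; these coefficients sum to $(c_1+\cdots+c_N)^2=L_N^{\,2}$, so the right side is a convex combination of the values $f_a(\kappa_i)$. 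Since $f_a$ is concave and strictly decreasing (recorded after (\ref{tri.comp.2.e1})), Jensen's inequality gives $f_a(\bar\kappa_{\mathrm{iter}}) \le f_a(\bar\kappa)$, hence $\bar\kappa_{\mathrm{iter}} \ge \bar\kappa$ with $\bar\kappa$ the explicit weighted average in (\ref{triangle.comp.mult.e1}). Combined with the monotonicity of $\tang{\kappa}{q}{p}{s}$ in $\kappa$, this produces the chain $\tang{\kappa_1}{q}{p}{x_1} \ge \tang{\bar\kappa_{\mathrm{iter}}}{q}{p}{s} \ge \tang{\bar\kappa}{q}{p}{s}$, which is (\ref{triangle.comp.mult.e01}).

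The main obstacle is uniformity of the threshold $\delta$ through the iteration. Each application of Lemma \ref{tri.comp.2.2} at stage $k$ requires $|x_{k-1}s| < \delta(|px_{k-1}|, \tilde\kappa_{k,N}, \kappa_{k-1})$, and the anchor distance $|px_{k-1}|$ may differ from $a=|pq|$ by as much as $|qs|$. Because $f_c(\kappa)$ depends smoothly on $c$, replacing $f_{|px_{k-1}|}$ by $f_a$ in the expansion (\ref{tri.comp.2.e1}) at each step introduces only an additional $O(|qs|^3)$ error; summing these $N$ errors remains $O(|qs|^3)$, and the $N$ individual thresholds can be consolidated into a single $\delta=\delta(|pq|,\kappa_1,\ldots,\kappa_N)$ depending only on the data in the statement.
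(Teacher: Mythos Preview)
Your proof is correct and follows essentially the same route as the paper. The paper redoes the Taylor-expansion computation of Lemma~\ref{tri.comp.2.2} directly for $N$ triangles, accumulating the analogue of (\ref{tri.comp.2.2.e8}) into
\[
-\cos\left(\tang{\kappa_{i+1}}{x_i}{p}{x_{i+1}}\right)
\le -\cos\left(\tang{\kappa_1}{x_0}{p}{x_1}\right)
+\sin^2\left(\tang{\kappa_1}{x_0}{p}{x_1}\right)\bigl[f(\kappa_1)c_1+\cdots+f(\kappa_i)c_i\bigr]+O\bigl(\textstyle\sum c_j^2\bigr),
\]
whereas you iterate Lemma~\ref{tri.comp.2.2} as a black box from right to left; both procedures unwind to the identical intermediate formula $L_N^{2}\,f_a(\bar\kappa_{\mathrm{iter}})=\sum_i c_i^{2}f_a(\kappa_i)+2\sum_{i<N}c_i(c_{i+1}+\cdots+c_N)f_a(\kappa_i)$, and both finish with the concavity and monotonicity of $f_a$ to pass from $\bar\kappa_{\mathrm{iter}}$ to the stated $\bar\kappa$. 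Your handling of the drifting anchor $|px_{k-1}|$ versus $a=|pq|$ and of the uniform threshold is adequate, since $N$ is fixed and the resulting errors are absorbed into the $O(|qs|^3)$ remainder just as in the paper's own estimate (\ref{tri.comp.2.2.e10})--(\ref{tri.comp.2.2.e12}).
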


\begin{proof}
  This is proved by a similar comuptation as in Lemma \ref{tri.comp.2.2}. Summing up the cosine laws for the adjacent triangles, we get the following inequality, as a counterpart of (\ref{tri.comp.2.2.e8}):
  \begin{align*}
    -\cos&\left(\tang{\kappa_{i+1}}{x_i}{p}{x_{i+1}}\right)
    \le -\cos\left(\tang{\kappa_1}{x_1}{p}{x_2}\right)
    \\
    &+\sin^2\left(\tang{\kappa_1}{x_1}{p}{x_2}\right)
    [f(\kappa_1)c_1+\dots+f(\kappa_i)c_i]
    +O((c_1^2+\dots+c_i^2)),
  \end{align*}
  A similar argument shows that when $|qs|$ is small, we have
  \begin{align}
    \tang{\kappa_1}{q}p{x_2}\ge\tang{\bar\kappa}{q}p{s},
  \end{align}
  where $\bar\kappa$ satisfies
  \begin{align*}
    f(\bar\kappa)&=
    \frac{\dsp\sum_{i=1}^N c_i^2f(\kappa_i)
    +2\sum_{i=1}^{N-1}\left(f(\kappa_1)c_1+\dots+f(\kappa_i)c_i\right)c_{i+1}} {(c_1+c_2+\dots+c_N)^2}.
  \end{align*}
  By the convexity and monotonicity of $f$,
  \begin{align*}
    \bar\kappa&\ge
    \frac{\dsp\sum_{i=1}^N c_i^2\kappa_i
    +2\sum_{i=1}^{N-1}\left(\kappa_1c_1+\dots+\kappa_ic_i\right)c_{i+1}} {(c_1+c_2+\dots+c_N)^2}
    \\
    &=\frac{\dsp\sum_{i=1}^N c_i^2\kappa_i
    +2\sum_{i=1}^{N-1} \left(c_{i+1}+\dots+c_N\right)c_i\kappa_i} {(c_1+c_2+\dots+c_N)^2}.
  \end{align*}
\end{proof}

In the following we give a special case of Lemma \ref{triangle.comp.mult}, which will be used in our case.

\begin{cor}\label{cor.mult.Alex}
  Let the assumption be as in Lemma \ref{triangle.comp.mult} for $i=1,\dots,2N$. Let $b_i=c_{2i-1}$ and $d_i=c_{2i}$. Assume $\kappa_{2i-1}=\kappa\ge\kappa_{2i}\ge\kappa^*$, $i=1,2,\dots,N$. Then (\ref{triangle.comp.mult.e01}) holds for
  \begin{align}
    \bar\kappa
    =\frac{(b_1+\dots+b_N)^2(\kappa-\kappa^*)} {(b_1+\dots+b_N+d_1+\dots+d_N)^2}+\kappa^*.
    \label{cor.mult.Alex.e1}
  \end{align}
\end{cor}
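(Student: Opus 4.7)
The plan is to derive the corollary directly from Lemma \ref{triangle.comp.mult} by simplifying the convex combination in (\ref{triangle.comp.mult.e1}) under the alternating-curvature hypothesis. I would introduce the tail sums $S_i := c_i + c_{i+1} + \cdots + c_{2N}$, so that $S_1 = L := c_1 + \cdots + c_{2N}$ and $S_{2N+1} = 0$, and begin from the telescoping identity
\[
c_i^2 + 2c_i(c_{i+1} + \cdots + c_{2N}) = c_i(S_i + S_{i+1}) = S_i^2 - S_{i+1}^2.
\]
This rewrites the numerator of (\ref{triangle.comp.mult.e1}) as $\sum_{i=1}^{2N}\kappa_i(S_i^2 - S_{i+1}^2)$, a form amenable to summation by parts.

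Applying Abel summation, I expect
\[
L^2\bar\kappa = \kappa_1 L^2 + \sum_{i=2}^{2N}(\kappa_i - \kappa_{i-1})S_i^2.
\]
Under the hypothesis $\kappa_{2j-1} = \kappa$, the odd-index differences $\kappa_{2j+1} - \kappa_{2j}$ exactly cancel the sign of the even-index ones $\kappa_{2j} - \kappa_{2j-1}$, so that (using $S_{2N+1} = 0$) the sum collapses to
\[
\bar\kappa = \kappa + \frac{1}{L^2}\sum_{j=1}^{N}(\kappa_{2j} - \kappa)\bigl(S_{2j}^2 - S_{2j+1}^2\bigr).
\]
Because $S_{2j}^2 - S_{2j+1}^2 = d_j(S_{2j} + S_{2j+1}) \geq 0$ and $\kappa^* - \kappa \leq \kappa_{2j} - \kappa \leq 0$, replacing each $\kappa_{2j}$ by the worst-case value $\kappa^*$ yields $\bar\kappa \geq \kappa - \frac{\kappa - \kappa^*}{L^2}\sum_{j=1}^{N}(S_{2j}^2 - S_{2j+1}^2)$.

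The remaining step is to establish the combinatorial inequality
\[
\sum_{j=1}^{N}\bigl(S_{2j}^2 - S_{2j+1}^2\bigr) \leq L^2 - (b_1 + \cdots + b_N)^2,
\]
which, once proved, rearranges into exactly the stated bound (\ref{cor.mult.Alex.e1}). Using $S_{2j} = d_j + \sum_{k>j}(b_k + d_k)$ and $S_{2j+1} = \sum_{k>j}(b_k + d_k)$ and expanding $L^2 - (\sum b_j)^2 = (\sum d_j)(2\sum b_j + \sum d_j)$, the inequality reduces after cancellation to the trivial bound $\sum_{j<k}d_j b_k \leq \bigl(\sum_j d_j\bigr)\bigl(\sum_k b_k\bigr)$. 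I do not anticipate any conceptual obstacle: the only mildly delicate step is the Abel summation, and the rest is routine bookkeeping tailored to the alternating pattern $\kappa, \kappa_2, \kappa, \kappa_4, \ldots$ that will be exploited in Section 3 when a geodesic is covered by $\kappa$-domains punctuated by short gap segments of curvature only known to be $\geq \kappa^*$.
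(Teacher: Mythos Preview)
Your argument is correct and follows essentially the same route as the paper: both start from formula (\ref{triangle.comp.mult.e1}), replace each even-index curvature by $\kappa^*$, and then invoke the combinatorial fact that the total weight on the odd (i.e.\ $\kappa$-) segments is at least $B^2=(b_1+\dots+b_N)^2$. The paper records only the resulting one-line inequality, whereas you supply the details via Abel summation and the reduction to $\sum_{j<k}d_jb_k\le BD$; this is a clean way to organize the same estimate.
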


\begin{proof}
  By the assumption and (\ref{triangle.comp.mult.e1}), the $\bar\kappa$ in Lemma \ref{triangle.comp.mult} satisfies
  \begin{align*}
    \bar\kappa
    &\ge\frac{(b_1+\dots+b_N)^2\kappa +2(b_1+\dots+b_N)(d_1+\dots+d_N)\kappa^*
    +(d_1+\dots+d_N)^2\kappa^*} {(b_1+\dots+b_N+d_1+\dots+d_N)^2}
    \\
    &=\frac{(b_1+\dots+b_N)^2(\kappa-\kappa^*)} {(b_1+\dots+b_N+d_1+\dots+d_N)^2}+\kappa^*.
  \end{align*}
\end{proof}

 In the proof of Theorem A, we also need an estimate for the comparison curvature when a triangle extends without any curvature control.

\begin{lem}\label{tri.comp.2.1}
  Let $x,z,y_0,y\in X$ such that $y_0\in\geodii{xy}_{X}$. Suppose that $|xy_0|=r$ and $|xy|=a$. For any $\kappa\in\mathbb R$, there is $\kappa^*=\kappa^*(a,r,\kappa)\in\mathbb R$ such that $\tang\kappa{x}{y_0}{z}\ge\tang{\kappa^*}{x}{y}{z}$. Moreover, $\kappa^*$ is decreasing in $a$ and $\kappa^*\to\kappa$ as $a\to r$. In particular, when $|xz|<\delta(r,\kappa)$ is small enough, $\kappa^*$ can be chosen explicitly as $\kappa^*=f_a^{-1}(f_r(\kappa))$.
\end{lem}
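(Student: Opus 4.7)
The plan is to reduce the hinge comparison $\tang\kappa{x}{y_0}{z}\ge\tang{\kappa^*}{x}{y}{z}$ to a purely model-space inequality and then choose $\kappa^*$ by matching the quadratic Taylor coefficients in (\ref{tri.comp.2.e1}). Fix $z\in X$ and set $\theta:=\tang\kappa{x}{y_0}{z}$. In $\mathbb M_\kappa^2$ realize the triangle with vertex angle $\theta$ at $\tilde x$ and adjacent sides $r,|xz|$; its opposite side has length $|y_0z|$. In $\mathbb M_{\kappa^*}^2$ form the auxiliary triangle with the same vertex angle $\theta$ and adjacent sides $a,|xz|$, whose opposite side I denote by $\ell^*$. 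Since the cosine law (\ref{cos.law}) makes the vertex angle strictly monotone in the opposite side, the desired inequality $\tang{\kappa^*}{x}{y}{z}\le\theta$ is equivalent to $|yz|\le\ell^*$. Because $y_0\in\geodii{xy}$, the triangle inequality in $X$ gives $|yz|\le|y_0z|+(a-r)$, so it suffices to prove the purely model-space statement
$$|y_0z|+(a-r)\le\ell^*(a,|xz|,\theta,\kappa^*)\qquad\text{for all admissible }\theta\text{ and }|xz|.$$

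For the explicit formula in the small $|xz|$ regime I would apply (\ref{tri.comp.2.e1}) with $b=|xz|$ to both model triangles, obtaining
$|y_0z|+(a-r)=a-|xz|\cos\theta+\tfrac12\sin^2\theta\,f_r(\kappa)\,|xz|^2+O(|xz|^3)$ and
$\ell^*=a-|xz|\cos\theta+\tfrac12\sin^2\theta\,f_a(\kappa^*)\,|xz|^2+O(|xz|^3)$.
The constant and linear-in-$|xz|$ terms cancel identically, and the quadratic coefficients coincide exactly when $f_a(\kappa^*)=f_r(\kappa)$, i.e. $\kappa^*=f_a^{-1}(f_r(\kappa))$. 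A uniform-in-$\theta$ bound on the cubic remainders then yields the required inequality for $|xz|<\delta(r,\kappa)$.

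For the monotonicity and limit I would use that $f_c(\kappa)$ is strictly decreasing in $\kappa$ (as recorded after (\ref{tri.comp.2.e1})) and strictly decreasing in $c$ (standard, since $\cs_\kappa/\snk$ is a $\cot_\kappa$-type function on the relevant range). Differentiating $f_a(\kappa^*)=f_r(\kappa)$ implicitly, increasing $a$ decreases $f_a$ and therefore forces $\kappa^*$ to decrease to restore the identity; letting $a\to r$ gives $f_r(\kappa^*)=f_r(\kappa)$ and hence $\kappa^*\to\kappa$. For $|xz|$ not small I would not aim for an explicit formula but instead observe that $\ell^*(\kappa^*)$ is continuous and strictly monotone in $\kappa^*$ with $\ell^*\to\infty$ as $\kappa^*\to-\infty$, so some finite $\kappa^*=\kappa^*(a,r,\kappa)$ (possibly far below $f_a^{-1}(f_r(\kappa))$) still dominates $|y_0z|+(a-r)$ uniformly.

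The main obstacle is the cubic remainder in the Taylor step: with $\kappa^*=f_a^{-1}(f_r(\kappa))$ the quadratic gap vanishes exactly, so only the $O(|xz|^3)$ terms separate the two sides, and one must verify that this remainder does not reverse the inequality, uniformly in $\theta\in[0,\pi]$, once $|xz|<\delta(r,\kappa)$. A secondary difficulty is producing a single $\kappa^*$ that works across all choices of $z$ — that is, uniformly in the pair $(\theta,|xz|)$ — rather than a $\kappa^*$ depending on the particular hinge; the concavity and monotonicity of $f_c$ used in the proof of Lemma \ref{tri.comp.2.2} allow one worst-case $\kappa^*$ to dominate every case.
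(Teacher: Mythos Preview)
Your proposal is correct and follows essentially the same route as the paper: use the triangle inequality $|yz|\le|y_0z|+(a-r)$, expand both the upper bound and the $\kappa^*$-hinge side via the Taylor formula (\ref{tri.comp.2.e1}), and match the quadratic coefficients to read off $\kappa^*=f_a^{-1}(f_r(\kappa))$. The paper's proof is equally terse on the cubic-remainder issue you flag (it simply says to ``compare \ldots\ in a way similar to Lemma~\ref{tri.comp.2.2}'') and dispatches the non-small $|xz|$ case with a one-line ``obvious by direct computation'', so your level of detail already matches---and your explicit derivation of the monotonicity in $a$ and the limit $\kappa^*\to\kappa$ goes slightly beyond what the paper writes out.
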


\begin{remark}
  Here $\kappa^*=f_a^{-1}(f_r(\kappa))\to-\infty$ as $r\to 0^+$ even if $\frac ar\to 0$ in a constant rate.
\end{remark}

\begin{proof}
\newcommand{\snkb}{\text{sn}_{\underline\kappa}}
\newcommand{\snkt}{\text{sn}_{\bar\kappa}}
  The existence of $\kappa^*$ is obvious by direct computation. Suppose that $|xz|=b$ is small. By the Taylor expansion (\ref{tri.comp.2.e1}) and the triangle inequality,
  \begin{align}
    |yz|&\le |y_0y|+|y_0z|
    \notag\\
    &= |y_0y|+|xy_0|-b\cos\left(\tang\kappa{x}{y_0}{z}\right) +\sin^2\left(\tang\kappa{x}{y_0}{z}\right)f_r(\kappa) \cdot b^2+O(b^3)
    \notag\\
    &=a-b\cos\left(\tang\kappa{x}{y_0}{z}\right) +\sin^2\left(\tang\kappa{x}{y_0}{z}\right)f_a(\kappa^*) \cdot b^2+O(b^3).
    \label{tri.comp.2.1.e1}
  \end{align}
  On the other hand,
  \begin{align}
    |yz|= a-b\cos\left(\tang{\kappa^*}{x}{y}{z}\right) +\sin^2\left(\tang{\kappa^*}{x}{y}{z}\right)f_a(\kappa^*) \cdot b^2+O(b^3).
    \label{tri.comp.2.1.e2}
  \end{align}
  Compare (\ref{tri.comp.2.1.e1}) with (\ref{tri.comp.2.1.e2}) in a way similar to Lemma \ref{tri.comp.2.2}. We get that when $b$ is sufficiently small,
  $$\tang{\kappa^*}{x}{y}{z}\le\tang\kappa{x}{y_0}{z}.$$
\end{proof}

\section{Comparisons near $\kappa$-geodesics}

In this section, we always assume that $\mathcal U\in\Alex_{loc}(\kappa)$. By the standard globalization process, we may assume $\diam(\mathcal U)\le\frac{\pi}{2\sqrt\kappa}$ if $\kappa>0$ and $\diam(\mathcal U)\le 1$ if $\kappa\le0$. If the metric completion $\bar{\mathcal U}$ is locally compact, then for any $p,q\in\bar{\mathcal U}$, there exists a geodesic $\geod{pq}$ connecting $p$ and $q$ in $\bar{\mathcal U}$. If $\bar{\mathcal U}$ is not locally compact, we consider its $\omega$-power $\bar{\mathcal U}^\omega$, where $\omega$ is a fixed non-principle ultrafilter on natural numbers (see \cite{AKP} for more details). $\bar{\mathcal U}$ can be viewed as a subspace of $\bar{\mathcal U}^\omega$. For any two points $p,q\in\bar{\mathcal U}$, there exists a geodesic $\geod{pq}$ connecting $p$ and $q$ in $\bar{\mathcal U}^\omega$. In either case, geodesic $\geod{pq}$ is well defined and $\geod{pq}\subset\mathcal U$ means that $p$ and $q$ can be connected by a geodesic in $\mathcal U$.

Let $\gamma$ be a geodesic. We call $\gamma$ a $\kappa$-geodesic if every point on $\gamma$ is contained in a $\kappa$-domain. By the definition, geodesic $\gamma_1$ is also a $\kappa$-geodesic with the same $\kappa$-domain covering, if $\gamma_1$ is close enough to $\gamma$. We start with recalling some results from \cite{Pet13}.
\begin{lem}[Lemma 2.3 in \cite{Pet13}]\label{Pet13-Lem2.3}
  Let $\Omega_p$ and $\Omega_q$ be two $\kappa$-
domains in $\bar{\mathcal U}$. Let $p\in\Omega_p$, $q\in\Omega_q$
and $\geod{pq}\in\Omega_p\cup\Omega_q$.
Then for any geodesic $\geod{qs}\subset \Omega_q$, we have $\ang{q}{p}{s}\ge\tang{\kappa}{q}{p}{s}$ if $|qs|$ is sufficiently small.
\end{lem}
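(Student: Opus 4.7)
The plan is to localize the comparison at $q$ by moving the far endpoint $p$ to an interior point $x$ of $\geod{pq}$ lying in $\Omega_q$, and then to recover the global model angle using Alexandrov's lemma. Fix $x\in\geodii{qp}$ so close to $q$ that $x\in\Omega_q$; by shrinking $|qs|$ we may further assume $s\in\Omega_q$ and that the whole triangle $\triangle qxs$, together with a short extension of $\geod{qx}$ past $x$, lies in $\Omega_q$. Inside $\Omega_q$ the full $\kappa$-comparison holds, so $\ang{q}{x}{s}\ge\tang{\kappa}{q}{x}{s}$; since $x\in\geodii{qp}$ gives $\ang{q}{x}{s}=\ang{q}{p}{s}$, we obtain
\[
\ang{q}{p}{s}\ge\tang{\kappa}{q}{x}{s}.
\]

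To upgrade $\tang{\kappa}{q}{x}{s}$ to $\tang{\kappa}{q}{p}{s}$ I would invoke Alexandrov's lemma (Lemma~\ref{Alex.lem}) along $\geod{qp}$: the desired monotonicity $\tang{\kappa}{q}{x}{s}\ge\tang{\kappa}{q}{p}{s}$ is equivalent to
\[
\tang{\kappa}{x}{p}{s}+\tang{\kappa}{x}{q}{s}\le\pi.
\]
The second summand is controlled inside $\Omega_q$ by $\tang{\kappa}{x}{q}{s}\le\ang{x}{q}{s}$, and the adjacency $\ang{x}{q}{s}+\ang{x}{p}{s}=\pi$ holds locally at $x$ by continuing $\geod{qx}$ a little past $x$ inside $\Omega_q$. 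Thus the whole lemma reduces to establishing the single hinge inequality
\[
\tang{\kappa}{x}{p}{s}\le\ang{x}{p}{s}
\]
at $x$, in which $\geod{xp}$ is now the long side and $\geod{xs}$ is short.

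The main obstacle is this last inequality, because $\geod{xp}$ is not contained in a single $\kappa$-domain but only in the two-domain union $\Omega_p\cup\Omega_q$. The plan is to cover $\geod{xp}$ by finitely many $\kappa$-domains (using compactness and the local Alexandrov property), pick a correspondingly fine partition $x=x_0,x_1,\dots,x_N=p$ with each consecutive pair in a common $\kappa$-domain, and combine the local comparisons at the $x_i$ using the weighted multi-triangle Alexandrov's lemma (Lemma~\ref{triangle.comp.mult}). With every $\kappa_i$ equal to $\kappa$, Lemma~\ref{triangle.comp.mult} returns $\bar\kappa=\kappa$ with no loss. The delicate point is the adjacency condition $\tang{\kappa}{x_i}{s}{x_{i-1}}+\tang{\kappa}{x_i}{s}{x_{i+1}}\le\pi$ at each interior $x_i$: it requires hinge bounds from $x_i$ toward the possibly distant $s$, which are themselves further instances of the lemma being proved. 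I would resolve this by a bootstrap, or equivalently by induction on the minimal number of $\kappa$-domains needed to cover $\geod{pq}$, with the base case being that $\geod{pq}$ fits in one $\kappa$-domain, where the comparison is classical. Existence of the auxiliary geodesics $\geod{x_is}$ used along the way is ensured by passing to the ultra-power $\bar{\mathcal U}^\omega$.
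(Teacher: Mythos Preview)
This lemma is quoted from \cite{Pet13} and is not proved in the present paper, so there is no in-paper argument to compare against. Your proposal, however, has a genuine gap: the bootstrap does not terminate.

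The reduction via Alexandrov's lemma is correct but vacuous: the hinge inequality $\ang{x}{p}{s}\ge\tang{\kappa}{x}{p}{s}$ at $x\in\geodii{qp}$ near $q$ is literally the lemma again with $q$ replaced by $x$ (still $x\in\Omega_q$, $p\in\Omega_p$, $\geod{xp}\subset\Omega_p\cup\Omega_q$, $\geod{xs}\subset\Omega_q$). For the induction on the number of $\kappa$-domains covering the long side, consider the two-domain case with the overlap point $x_1\in\Omega_p\cap\Omega_q$ and $\geod{x_1p}\subset\Omega_p$. The adjacency hypothesis of Lemma~\ref{triangle.comp.mult.im} at $x_1$ requires $\ang{x_1}{s}{p}\ge\tang{\kappa}{x_1}{s}{p}$. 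The $\kappa$-domain property of $\Omega_p$ would need $s\in\Omega_p$; that of $\Omega_q$ would need $p\in\Omega_q$; neither holds. If instead you invoke the inductive hypothesis---after all, $\geod{x_1p}$ lies in the single domain $\Omega_p$---note that the base case (indeed the lemma itself) only yields the comparison when the \emph{short} side is small, whereas here $|x_1s|\approx|x_1q|$ is a fixed positive length that does not shrink as $|qs|\to0$. Thus every step of your recursion reproduces a sub-problem of the same shape rather than a strictly simpler one. The device you are missing, which Petrunin's argument supplies, is to arrange that each auxiliary hinge used along $\geod{pq}$ has its short side inside the same $\kappa$-domain as the vertex; your scheme forces every auxiliary hinge to point toward $s\in\Omega_q\setminus\Omega_p$, which is precisely the configuration that cannot be handled locally.
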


\begin{cor}[Corollary 2.4 in \cite{Pet13}]\label{Pet13-Cor2.4}
Let $\Omega_1$ and $\Omega_2$ be two $\kappa$-domains in $\bar{\mathcal U}$. Assume $\Omega_3\subset\Omega_1\cup\Omega_2$
is an open set such that for any two points $x, y\in\mathcal U$, any geodesic $\geod{xy}$ lies
in $\Omega_1\cup\Omega_2$. Then $\Omega_3$ is a $\kappa$-domain.
\end{cor}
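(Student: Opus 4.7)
The plan is to reduce the two $\kappa$-domain conditions for $\Omega_3$ to a finite chain of local comparisons handled by Lemma~\ref{Pet13-Lem2.3}, then glue them together with Alexandrov's lemma (Lemma~\ref{Alex.lem}). Fix a geodesic $\geod{qs}\subset\Omega_3$ and a point $p\in\Omega_3\setminus\geod{qs}$. By compactness of $\geod{qs}$ and openness of the cover $\Omega_3\subset\Omega_1\cup\Omega_2$, choose a partition $q=x_0,x_1,\dots,x_N=s$ such that each subarc $\geod{x_{i-1}x_i}$ lies in a single $\kappa$-domain $\Omega_{j_i}\in\{\Omega_1,\Omega_2\}$ and $|x_{i-1}x_i|$ is below the smallness threshold Lemma~\ref{Pet13-Lem2.3} requires at both $(p,x_{i-1})$ and $(p,x_i)$. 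The standing hypothesis (after approximating by points in $\mathcal U$ if necessary) guarantees that each connecting geodesic $\geod{px_i}$ lies in $\Omega_1\cup\Omega_2$, which is exactly the cover condition demanded by Lemma~\ref{Pet13-Lem2.3} of its joining segment.

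At every interior breakpoint $x_i$ ($1\le i\le N-1$), Lemma~\ref{Pet13-Lem2.3} furnishes $\ang{x_i}{p}{x_{i-1}}\ge\tang{\kappa}{x_i}{p}{x_{i-1}}$ and $\ang{x_i}{p}{x_{i+1}}\ge\tang{\kappa}{x_i}{p}{x_{i+1}}$. Since $x_{i-1},x_i,x_{i+1}$ lie on the same geodesic $\geod{qs}$, the directions from $x_i$ to its two neighbors are opposite, so $\ang{x_i}{p}{x_{i-1}}+\ang{x_i}{p}{x_{i+1}}=\pi$ and therefore the model-angle hinge condition
\begin{equation*}
\tang{\kappa}{x_i}{p}{x_{i-1}}+\tang{\kappa}{x_i}{p}{x_{i+1}}\le\pi
\end{equation*}
holds at each interior $x_i$.

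With these hinges I would iterate Alexandrov's lemma. Its first application at $x_1$ gives $\tang{\kappa}{q}{p}{x_1}\ge\tang{\kappa}{q}{p}{x_2}$ and, by the equivalent form, $\tang{\kappa}{x_2}{p}{q}\le\tang{\kappa}{x_2}{p}{x_1}$; the latter combined with the hinge at $x_2$ yields $\tang{\kappa}{x_2}{p}{q}+\tang{\kappa}{x_2}{p}{x_3}\le\pi$, whence Lemma~\ref{Alex.lem} produces $\tang{\kappa}{q}{p}{x_2}\ge\tang{\kappa}{q}{p}{x_3}$. Induction gives $\tang{\kappa}{q}{p}{x_1}\ge\tang{\kappa}{q}{p}{s}$; together with $\ang{q}{p}{x_1}\ge\tang{\kappa}{q}{p}{x_1}$ from Lemma~\ref{Pet13-Lem2.3} and $\ang{q}{p}{s}=\ang{q}{p}{x_1}$ (since $x_1\in\geod{qs}$), this yields the global comparison $\ang{q}{p}{s}\ge\tang{\kappa}{q}{p}{s}$. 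The adjacency $\ang{x}{p}{q}+\ang{x}{p}{s}=\pi$ at an arbitrary interior $x\in\geodii{qs}$ is obtained by inserting $x$ as a partition point $x_k$, invoking the hinge $\tang{\kappa}{x}{p}{x_{k-1}}+\tang{\kappa}{x}{p}{x_{k+1}}\le\pi$ from the previous paragraph, refining the partition so that $x_{k\pm 1}\to x$, and using the monotone convergence of $\tang{\kappa}{x}{p}{x_{k\pm 1}}$ to $\ang{x}{p}{q}$ and $\ang{x}{p}{s}$ respectively inside a local $\kappa$-domain containing $x$; the reverse inequality $\ge\pi$ is the triangle inequality for angles.

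The main obstacle will be orchestrating the partition uniformly: one must choose a single partition that is simultaneously subordinate to the cover $\{\Omega_1,\Omega_2\}$ and below the pointwise smallness thresholds required by Lemma~\ref{Pet13-Lem2.3} along all of the compact set $\geod{qs}$, which is dealt with by a Lebesgue-number argument. A secondary subtlety is that the stated hypothesis concerns geodesics with endpoints in $\mathcal U$, whereas $p$ and the $x_i$ may lie in $\bar{\mathcal U}\setminus\mathcal U$; this is bridged by density of $\mathcal U$ in $\bar{\mathcal U}$, openness of $\Omega_1\cup\Omega_2$, and, when $\bar{\mathcal U}$ fails to be locally compact, working inside the ultrapower $\bar{\mathcal U}^\omega$ set up at the start of Section~2.
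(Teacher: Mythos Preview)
The paper does not give its own proof of this corollary; it is quoted from \cite{Pet13} and used as a black box, so there is nothing in the present paper to compare against. Your argument---partition $\geod{qs}$ subordinate to the cover $\{\Omega_1,\Omega_2\}$, extract the hinge inequalities $\tang{\kappa}{x_i}{p}{x_{i-1}}+\tang{\kappa}{x_i}{p}{x_{i+1}}\le\pi$ at each interior node via Lemma~\ref{Pet13-Lem2.3}, and telescope with Alexandrov's lemma---is exactly the standard derivation one finds in Petrunin's paper.

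Two places deserve a bit more care. First, when you invoke Lemma~\ref{Pet13-Lem2.3} at $x_i$ you must name domains $\Omega_p\ni p$ and $\Omega_q\supset\geod{x_{i-1}x_i}$ with $\geod{px_i}\subset\Omega_p\cup\Omega_q$. If $p$ and $\geod{x_{i-1}x_i}$ both lie only in $\Omega_1$ while $\geod{px_i}$ wanders through $\Omega_2\setminus\Omega_1$, the literal hypothesis with $\Omega_p=\Omega_q=\Omega_1$ fails and no other assignment is available. This is not a genuine obstruction: the \emph{proof} of the Key Lemma in \cite{Pet13} only uses that $\geod{pq}$ is covered by $\kappa$-domains with $\geod{qs}$ inside the one containing $q$; the two-domain packaging is cosmetic. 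Second, your limit argument for $\ang{x}{p}{q}+\ang{x}{p}{s}\le\pi$ tacitly uses monotonicity of $\tang{\kappa}{x}{p}{y}$ in $y$, which you have not yet established since $p$ need not lie in the local $\kappa$-domain at $x$. It is cleaner to argue, as in the proof of Lemma~\ref{Pet13-Lem2.5.2}(1), that the three-angle sum at any point of a $\kappa$-domain is at most $2\pi$; taking the third pair to be the opposite directions along $\geod{qs}$ gives $\le\pi$ directly.
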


We observe that the proof of Lemma 2.5 in \cite{Pet13} also works for the case $x=p$. Thus the following stronger result holds. For completeness, we repeat Petrunin's proof here.

\begin{lem}\label{Pet13-Lem2.5.2}
  Let $\geod{pq}$ be a $\kappa$-geodesic and the points $x, y$ and $z$ appear
on $\geodci{pq}$ in the same order. Assume that there are $\kappa$-domains $\Omega_1\supset\geod{xy}$ and $\Omega_2\supset\geod{yz}$. Then
\begin{enumerate}
  \item geodesic $\geod{xz}$ is unique;
  \item for any $\epsilon>0$, there is $\delta>0$ such that $\geod{uv}\subset B_{\epsilon}(\geod{xz})$ for any $u\in B_{\delta}(\geod{xy})$ and $v\in B_{\delta}(\geod{yz})$.
\end{enumerate}
In particular, there is an open set
$\Omega_3\subset\bar{\mathcal U}$ which contains $\geod{xz}$ and such that for any two points $u,v\in \Omega_3\cap\mathcal U$, any geodesic $\geod{uv}$ lies in $\Omega_1\cup\Omega_2$. By Corollary \ref{Pet13-Cor2.4}, $\Omega_3$ is a $\kappa$-domain.

\end{lem}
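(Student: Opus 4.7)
My plan is to prove (2) first and recover (1) and the ``in particular'' clause as corollaries. The key preliminary observation is that the broken path $\gamma_0:=\geod{xy}\cup\geod{yz}$ is itself a geodesic from $x$ to $z$, since $x,y,z$ lie in this order on the $\kappa$-geodesic $\geod{pq}$, so $|xy|+|yz|=|xz|$. Before attacking (2), I would verify an angular-smoothness condition at $y$. Applying Lemma~\ref{Pet13-Lem2.3} with $p:=x$, $q:=y$, $\Omega_p:=\Omega_1$ (note $\geod{pq}=\geod{xy}\subset\Omega_1\subset\Omega_p\cup\Omega_q$) and $\Omega_q:=\Omega_2$, and any $s\in\geod{yz}$ with $|ys|$ small, gives $\ang{y}{x}{s}\ge\tang{\kappa}{y}{x}{s}$. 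Since $x,y,s$ are collinear along $\geod{pq}$, the model triangle degenerates, forcing $\tang{\kappa}{y}{x}{s}=\pi$, hence $\ang{y}{x}{s}=\pi$. Thus $\gamma_0$ is angularly smooth through $y$.

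For (2), I would argue by contradiction. If the conclusion fails, there exist $\epsilon>0$, $\delta_n\to 0^+$, points $u_n\in B_{\delta_n}(\geod{xy})$, $v_n\in B_{\delta_n}(\geod{yz})$, and geodesics $\geod{u_nv_n}$ that leave $B_\epsilon(\gamma_0)$. Taking ultralimits in $\bar{\mathcal U}^\omega$ (or ordinary subsequential limits in the locally compact case), I obtain $u_\infty\in\geod{xy}$, $v_\infty\in\geod{yz}$, and a limit geodesic $\gamma_\infty$ from $u_\infty$ to $v_\infty$ of length $|u_\infty y|+|y v_\infty|$ which fails to lie in $B_{\epsilon/2}(\gamma_0)$ at some interior time. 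The heart of the argument is to show that $\gamma_\infty$ must coincide with the canonical broken geodesic $\geod{u_\infty y}\cup\geod{y v_\infty}$. Working inside the $\kappa$-domain $\Omega_1$, which contains a neighborhood of $u_\infty$, I would apply the local angle comparison of Lemma~\ref{Pet13-Lem2.3} to triangles $\triangle u_\infty\, y\, \gamma_\infty(t)$ for small $t>0$. Combined with the identity $|\gamma_\infty(t)v_\infty|=|u_\infty v_\infty|-t$ and the angle-$\pi$ condition at $y$, this forces the initial direction of $\gamma_\infty$ at $u_\infty$ to coincide with the direction toward $y$; local uniqueness of geodesics inside $\Omega_1$ then propagates this agreement so that the initial segment of $\gamma_\infty$ tracks $\geod{u_\infty y}$ all the way to $y$, and a symmetric argument in $\Omega_2$ forces the remainder to follow $\geod{y v_\infty}$, contradicting the escape condition.

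Part (1) then follows from (2) by specializing to $u=x$, $v=z$ and letting $\delta\to 0^+$: any two geodesics from $x$ to $z$ lie in arbitrarily small neighborhoods of $\geod{xz}$, hence must coincide. For the ``in particular'' assertion, I would first choose $\epsilon>0$ small enough that $B_\epsilon(\geod{xy})\subset\Omega_1$ and $B_\epsilon(\geod{yz})\subset\Omega_2$, let $\delta$ be as provided by (2), and take $\Omega_3:=B_\delta(\geod{xy})\cup B_\delta(\geod{yz})$. A short case analysis on whether $u,v$ lie in the first, second, or different components then shows any geodesic $\geod{uv}$ between points of $\Omega_3\cap\mathcal U$ is contained in $\Omega_1\cup\Omega_2$, and Corollary~\ref{Pet13-Cor2.4} applies.

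The main obstacle will be the rigidity step that upgrades the local angle-$\pi$ condition at $y$ into the global statement $\gamma_\infty=\gamma_0$. In particular, one must handle the possibility that $\gamma_\infty$ leaves $\Omega_1\cup\Omega_2$ in its interior, where no curvature bound is a~priori available; the point of framing the contradiction via escape from $B_\epsilon(\gamma_0)$ rather than from $\Omega_1\cup\Omega_2$ is precisely that the $\kappa$-domain control on the initial and terminal segments suffices to pin $\gamma_\infty$ to $\gamma_0$ without ever needing curvature bounds elsewhere.
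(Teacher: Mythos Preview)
Your contradiction-plus-ultralimit setup for (2) matches the paper's, but the rigidity step you single out as the ``main obstacle'' is genuinely unjustified as written, and the paper disposes of the limit geodesic in a much simpler way. Your claim that Lemma~\ref{Pet13-Lem2.3} applied to $\triangle\, u_\infty\, y\, \gamma_\infty(t)$, together with $|\gamma_\infty(t)v_\infty|=|u_\infty v_\infty|-t$, forces the initial direction of $\gamma_\infty$ to point toward $y$ does not follow: nothing bounds $|y\,\gamma_\infty(t)|$ from above, so the comparison angle $\tang{\kappa}{u_\infty}{y}{\gamma_\infty(t)}$ carries no information, and there is no mechanism linking the direction of $\gamma_\infty$ at $u_\infty$ to the direction along $\geod{pq}$. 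Worse, if $x=p$ then $u_\infty$ may equal $p$, where no backward extension is available, so a purely local argument at $u_\infty$ cannot succeed.

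The paper instead exploits the hypothesis $z\in\geodci{pq}$ directly: since $y$ and $z$ are both interior to $\geod{pq}$, the limit point $v_\infty\in\geod{yz}$ always lies in $\geodii{pq}$, hence in a small $\kappa$-domain $B_r(v_\infty)$ (which is checked to remain a $\kappa$-domain in $\bar{\mathcal U}^\omega$). Now $\gamma_\infty$ and the sub-segment $\geod{u_\infty v_\infty}\subset\geod{pq}$ are two distinct geodesics with the same endpoints; extending past $v_\infty$ along $\geod{pq}$ produces a bifurcated geodesic at $v_\infty$, contradicting the angle-sum inequality $\ang{v_\infty}{a}{b}+\ang{v_\infty}{b}{c}+\ang{v_\infty}{a}{c}\le 2\pi$ that holds at any point of a $\kappa$-domain. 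This one-line non-branching argument replaces your entire direction-tracking scheme. The paper also proves (1) directly from this same angle-sum fact rather than deducing it from (2); your deduction of (1) from (2) is correct but unnecessary once non-branching is in hand.
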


\begin{proof}
  (1) follows from the fact that for any point $z$ in a $\kappa$-domain and any points $p,q,s\in\mathcal U$, $\ang{z}{p}{q}+\ang{z}{p}{s}+\ang{z}{q}{s}\le2\pi$. This is proved exactly the same way as in \cite{BGP}. (2) is argued by contradiction. Assume that there exists $\epsilon>0$ and a sequence of geodesics $\geod{u_iv_i}$, such that $u_i\to u$, $v_i\to v$ but $\geod{u_iv_i}\not\subset B_\epsilon(\geod{xz})$. The ultralimit of $\geod{u_iv_i}$ is a geodesic in $\bar{\mathcal U}^\omega$, connecting $u$ and $v$. Not losing generality, assume $v\neq u$. Then we obtain a bifurcated geodesic at $v\in\geodii{pq}$. For $r>0$ small enough, $v$ is contained in a $\kappa$-domain $B_r(v)\subset\bar{\mathcal U}$. It is straightforward to verify that $B_r(v)$ is also a $\kappa$-domain in $\bar{\mathcal U}^\omega$, a contradiction.

\end{proof}

\begin{cor}\label{2.k-domain}
  Let $\geod{pq}$ be a $\kappa$-geodesic. For any $y\in\geodii{pq}$, there exist $\kappa$-domains $\Omega_1, \Omega_2$ such that  $\Omega_1\supset\geod{py}$ and $\Omega_2\supset\geod{yq}$.
\end{cor}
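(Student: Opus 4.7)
The plan is to obtain each of $\Omega_1$ and $\Omega_2$ by covering the respective sub-geodesic with finitely many $\kappa$-domains and then gluing them successively via Lemma \ref{Pet13-Lem2.5.2}. The argument for $\geod{py}$ and for $\geod{yq}$ is symmetric (reversing the parametrization of $\geod{pq}$ exchanges the two halves), so I would only spell out the construction of $\Omega_1$.

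First, since $\geod{pq}$ is a $\kappa$-geodesic, the definition gives, for every point $z\in\geod{pq}$, some $\kappa$-domain $\Omega(z)$ containing $z$. Each $\Omega(z)$ is open in the ambient space ($\bar{\mathcal U}$, or $\bar{\mathcal U}^\omega$ when $\bar{\mathcal U}$ is not locally compact), and $\geod{py}$ is the continuous image of a compact interval, hence compact. Extracting a finite subcover, I get $\kappa$-domains $\Omega^{(1)},\dots,\Omega^{(N)}$ whose union contains $\geod{py}$. By a standard Lebesgue-number argument on the compact parametrization of $\geod{py}$, I can then pick points
\[
p=y_0,\;y_1,\;y_2,\;\dots,\;y_N=y
\]
appearing in this order on $\geodci{pq}$ such that each sub-geodesic $\geod{y_{i-1}y_i}$ (which is unique and lies on $\geod{pq}$) is contained in a single $\Omega^{(i)}$.

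Next, I would induct on $N$. For $N=1$ there is nothing to prove. For the inductive step, suppose I have already produced a $\kappa$-domain $\Omega'\supset\geod{y_0 y_{k}}$, with $k<N$. Apply Lemma \ref{Pet13-Lem2.5.2} to the $\kappa$-geodesic $\geod{pq}$ and the three points $y_0,y_k,y_{k+1}\in\geodci{pq}$, using $\Omega'\supset\geod{y_0 y_k}$ and $\Omega^{(k+1)}\supset\geod{y_k y_{k+1}}$; this produces a $\kappa$-domain containing $\geod{y_0 y_{k+1}}$. After $N-1$ iterations I obtain a single $\kappa$-domain $\Omega_1\supset\geod{py}$, as required. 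Applying the same argument to the reversed $\kappa$-geodesic $\geod{qp}$ and the point $y\in\geodii{qp}$ yields $\Omega_2\supset\geod{yq}$.

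The main point to check carefully is that Lemma \ref{Pet13-Lem2.5.2} applies at each inductive step: its hypothesis requires the three points to lie on $\geodci{pq}$ of the original $\kappa$-geodesic, which is automatic here because $y\in\geodii{pq}$ forces $y_0,\dots,y_N\in\geodci{pq}$. Beyond this, the only mildly delicate issue is compactness of $\geod{py}$ when $\geod{pq}$ lives in the ultrapower $\bar{\mathcal U}^\omega$; but a geodesic in any metric space is the continuous image of a compact interval and is therefore compact, so the finite subcover exists in either setting. Nothing else in the argument is nontrivial.
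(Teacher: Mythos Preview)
Your proposal is correct and follows essentially the same approach as the paper: extract a finite chain of $\kappa$-domains along the geodesic and inductively merge them via Lemma~\ref{Pet13-Lem2.5.2}. The paper's version covers all of $\geod{pq}$ at once and arranges $y$ to be one of the partition points (so one side is handled by a single remaining $V_N$), whereas you treat the two halves separately; this is a purely cosmetic difference.
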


\begin{proof}
  Let $\{V_i,\, i=1,2,\dots,N\}$ be a $\kappa$-domain covering of $\geod{pq}$. Let $x_1=p$, $x_{N+1}= q$ and $x_i\in V_{i-1}\cap V_i$, $i=2,3,\dots,N$. We may assume that $x_i$ appear on $\geod{pq}$ in the same order and $V_i\supset\geod{x_ix_{i+1}}$. Not losing generality, assume $x_N=y$. By Lemma \ref{Pet13-Lem2.5.2}, there is a $\kappa$-domain $\Omega_1$ containing $\geod{x_1x_3}$. Thus $\{\Omega_1,V_3,V_4,\dots,V_N\}$ forms a $\kappa$-domain covering of $\geod{pq}$. Repeat applying Lemma \ref{Pet13-Lem2.5.2} as the above, we will arrive at a position that $\geod{pq}$ is covered by $\kappa$-domains $\Omega_{N-2}$ and $V_N$. In fact, we have $\geod{px_N}\subset \Omega_{N-2}$ and $\geod{x_Nx_{N+1}}\subset V_N$.
\end{proof}

Combining Lemma \ref{Pet13-Lem2.5.2} and Corollary \ref{2.k-domain}, we get the following result immediately.

\begin{lem}\label{geod.close}
  Let $\geod{pq}$ be a $\kappa$-geodesic and $x\in\geod{pq}$, $z\in\geodii{pq}$. Then
\begin{enumerate}
  \item geodesic $\geod{xz}$ is unique;
  \item for any $r>0$, there is $\epsilon>0$ such that for any $u,v\in B_\epsilon(\geod{xz})$, any geodesic $\geod{uv}$ is a $\kappa$-geodesic contained in $B_r(\geod{xz})$.
\end{enumerate}
\end{lem}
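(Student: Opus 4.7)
The plan is to reduce both conclusions to Lemma~\ref{Pet13-Lem2.5.2} by producing a suitable intermediate point and flanking $\kappa$-domains. Assume $x\neq z$, and, without loss of generality, that $x$ precedes $z$ in the parameterization of $\geod{pq}$ (otherwise reverse the roles of $p$ and $q$). Since $z\in\geodii{pq}$, I can choose $y\in\geodii{pq}$ lying strictly between $x$ and $z$ on $\geod{pq}$. Corollary~\ref{2.k-domain} applied at $y$ supplies $\kappa$-domains $\Omega_1\supset\geod{py}\supset\geod{xy}$ and $\Omega_2\supset\geod{yq}\supset\geod{yz}$, so that conclusion (1) is exactly Lemma~\ref{Pet13-Lem2.5.2}(1), and the closing sentence of that lemma produces an open $\kappa$-domain $\Omega_3\supset\geod{xz}$ with the property that every geodesic between two points of $\Omega_3\cap\mathcal U$ lies in $\Omega_1\cup\Omega_2$.

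For conclusion (2), fix $r>0$. Using that $\geod{xz}$ is compact and $\Omega_3$ is open, choose $\epsilon_1>0$ with $B_{\epsilon_1}(\geod{xz})\subset\Omega_3$. For any $u,v\in B_{\epsilon_1}(\geod{xz})\cap\mathcal U$, any geodesic $\geod{uv}$ lies in $\Omega_1\cup\Omega_2$, so each of its points lies in a $\kappa$-domain and $\geod{uv}$ is itself a $\kappa$-geodesic. It remains to shrink $\epsilon_1$ to some $\epsilon\in(0,\epsilon_1]$ so that additionally $\geod{uv}\subset B_r(\geod{xz})$. I argue this by the same ultralimit/contradiction device used for Lemma~\ref{Pet13-Lem2.5.2}(2): a failure would produce sequences $u_i,v_i\in B_{1/i}(\geod{xz})$ together with $w_i\in\geod{u_iv_i}$ satisfying $\dist{w_i,\geod{xz}}\ge r$. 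Compactness of $\geod{xz}$ lets us extract limits $u_i\to u^*$, $v_i\to v^*$ on $\geod{xz}$, and the ultralimit of $\geod{u_iv_i}$ in $\bar{\mathcal U}^\omega$ is a geodesic from $u^*$ to $v^*$ containing a point outside $B_r(\geod{xz})$. On the other hand, the sub-arc of $\geod{xz}$ joining $u^*$ and $v^*$ is a second geodesic between them, lying entirely inside $\geod{xz}$. Since $u^*,v^*\in\Omega_3$ and $\Omega_3$ persists as a $\kappa$-domain in $\bar{\mathcal U}^\omega$, the coexistence of two distinct geodesics between $u^*$ and $v^*$ inside a single $\kappa$-domain contradicts standard uniqueness.

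The main obstacle is the ultralimit step in conclusion (2), especially when $\bar{\mathcal U}$ is not locally compact; but this is precisely the tool Petrunin already deploys in Lemma~\ref{Pet13-Lem2.5.2}, and the verification that a $\kappa$-domain in $\bar{\mathcal U}$ remains a $\kappa$-domain in $\bar{\mathcal U}^\omega$ is already folded into that argument. A minor subtlety worth noting is that the stability estimate of Lemma~\ref{Pet13-Lem2.5.2}(2) is stated with $u$ near $\geod{xy}$ and $v$ near $\geod{yz}$, a hypothesis that can fail when $u,v$ both sit near the same half of $\geod{xz}$; the ultralimit argument sidesteps this case analysis in one stroke.
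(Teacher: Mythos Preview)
Your argument is essentially the paper's intended one: the paper merely states that the lemma follows ``immediately'' from Corollary~\ref{2.k-domain} and Lemma~\ref{Pet13-Lem2.5.2}, and you have correctly supplied the missing details by choosing an intermediate $y$, invoking Corollary~\ref{2.k-domain} to manufacture $\Omega_1,\Omega_2$, and then reading off the $\kappa$-domain $\Omega_3\supset\geod{xz}$ from Lemma~\ref{Pet13-Lem2.5.2}.

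One small wrinkle in your contradiction argument for (2): you conclude by saying the two geodesics between $u^*$ and $v^*$ lie ``inside a single $\kappa$-domain'' $\Omega_3$, but the ultralimit geodesic has no reason to stay inside $\Omega_3$ (indeed, it contains a point at distance $\ge r$ from $\geod{xz}$). The clean fix is exactly what the proof of Lemma~\ref{Pet13-Lem2.5.2}(2) does: since $u^*\neq v^*$ and $z\in\geodii{pq}$, at least one of $u^*,v^*$ lies in $\geodii{pq}$, and a small ball around that point is a $\kappa$-domain in $\bar{\mathcal U}^\omega$; the two geodesics then bifurcate there, which is impossible. With that adjustment your proof is complete and matches the paper's route.
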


The following thin triangle comparison is contained in the proof of the main theorem in \cite{Pet13}, which follows directly from Lemma \ref{Pet13-Lem2.3} and Corollary \ref{2.k-domain}.

\begin{lem}\label{thin.comp}
  Let $\geod{pq}$ be a $\kappa$-geodesic. Then there exists $r>0$ such that $\ang{q}{p}{s}\ge\tang{\kappa}{q}{p}{s}$ for any $s\in B_r(q)\setminus\{q\}$.
\end{lem}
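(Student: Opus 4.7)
The plan is to assemble \textbf{Lemma \ref{Pet13-Lem2.3}} and \textbf{Corollary \ref{2.k-domain}} in the obvious way, using that $\Omega_2$ is open and that geodesics out of $q$ cannot leak out of $\Omega_2$ when their other endpoint is close to $q$.

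First I would pick any interior point $y\in\geodii{pq}$ and invoke Corollary \ref{2.k-domain} to obtain $\kappa$-domains $\Omega_1,\Omega_2\subset\bar{\mathcal U}$ with $\geod{py}\subset\Omega_1$ and $\geod{yq}\subset\Omega_2$. This gives $p\in\Omega_1$, $q\in\Omega_2$, and $\geod{pq}\subset\Omega_1\cup\Omega_2$, which is precisely the hypothesis that Lemma \ref{Pet13-Lem2.3} requires on the pair $(\Omega_p,\Omega_q):=(\Omega_1,\Omega_2)$.

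Next I need to arrange that, for every $s$ in a small enough ball around $q$, the (any) geodesic $\geod{qs}$ stays inside $\Omega_2$; then Lemma \ref{Pet13-Lem2.3} applies and yields $\ang{q}{p}{s}\ge\tang{\kappa}{q}{p}{s}$. Since $\Omega_2$ is open we may choose $r_0>0$ with $B_{r_0}(q)\subset\Omega_2$. For $s\in B_{r_0}(q)$ the geodesic $\geod{qs}$ has length less than $r_0$, so if it ever exited $\Omega_2$ it would have to return, which contradicts the no-bifurcation statement established inside $\kappa$-domains (the argument used in part (2) of Lemma \ref{Pet13-Lem2.5.2}). Concretely, by Lemma \ref{geod.close}(2) applied with $x=z=q$ (or by a direct continuity/ultrafilter argument on $\Omega_2$), there is $r_1\le r_0$ so that $\geod{qs}\subset\Omega_2$ for every $s\in B_{r_1}(q)$. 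Taking the additional smallness threshold $r_2$ furnished by Lemma \ref{Pet13-Lem2.3} and setting $r=\min\{r_1,r_2\}$ finishes the proof.

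Since the substantive work is packaged in the cited results, I do not anticipate a real obstacle; the only thing to double-check is the mild point that the geodesic $\geod{qs}$ remains in the $\kappa$-domain $\Omega_2$, which is handled by openness of $\Omega_2$ together with the uniqueness/non-bifurcation properties already proved for $\kappa$-geodesics.
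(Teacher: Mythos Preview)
Your proposal is correct and follows exactly the route the paper indicates: combine Corollary \ref{2.k-domain} with Lemma \ref{Pet13-Lem2.3}. The only quibble is that your bifurcation/Lemma \ref{geod.close} discussion is unnecessary---once $B_{r_0}(q)\subset\Omega_2$, any geodesic $\geod{qs}$ with $|qs|<r_0$ automatically lies in $B_{r_0}(q)\subset\Omega_2$ since every point on it has distance at most $|qs|$ from $q$.
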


%Combing Lemmas \ref{geod.close} and \ref{thin.comp}, we have the following result.
%
%\begin{cor}\label{uniform.b3}
%  Let $\geod{pq}$ be a $\kappa$-geodesic. For any $p_0\in\geodii{pq}$, there exists $r>0$ such that $\ang{x}{p_1}{y}\ge\tang{\kappa}{x}{p_1}{y}$ for any $p_1\in B_r(p_0)$ and any $x,y\in B_r(q)$.
%\end{cor}

As an application of the above results, we get the following comparison.

\begin{lem}\label{thin.comp.er}
  Let $\geod{pq}$ be a $\kappa$-geodesic. For any $p_0\in\geodic{pq}$, there exists $r>0$, depending only on $|pp_0|$ and the way that $\geod{pq}$ sits in its $\kappa$-domain covering, such that for with
  any $u,v,w\in B_r(\geod{p_0q})$, we have $\ang{u}{w}{v}\ge\tang{\kappa}{u}{w}{v}$.
\end{lem}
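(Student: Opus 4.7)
The plan is to establish $\ang{u}{w}{v}\ge\tang{\kappa}{u}{w}{v}$ at the vertex $u$ by subdividing the opposite side $\geod{uv}$ into many short pieces and combining local angle comparisons at each subdivision point through iterated Alexandrov's lemma.

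First I would choose $r>0$ small enough that, for every $u,v,w\in B_r(\geod{p_0q})$, each of $\geod{uv}$, $\geod{uw}$, $\geod{vw}$ is a uniquely-determined $\kappa$-geodesic lying in a slightly larger tube $B_{r'}(\geod{p_0q})$ whose $\kappa$-domain covering is inherited from that of $\geod{pq}$. In the interior of $\geod{p_0q}$ this is immediate from Lemma \ref{geod.close}; at the endpoint $q$ (and at $p_0$ if $p_0=p$) we additionally use the $\kappa$-domain already surrounding the endpoint, which exists because $\geod{pq}$ is a $\kappa$-geodesic. Compactness of $\geod{p_0q}$ lets us patch these local choices into a single tube, and this is how the dependence of $r$ on $|pp_0|$ and on the ambient covering enters.

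Given such $u,v,w$, subdivide $\geod{uv}$ as $u=s_0,s_1,\dots,s_N=v$ with $\max_i|s_is_{i+1}|$ so small that two things hold at every interior node $s_i$. First, since $\geod{ws_i}$ is itself a $\kappa$-geodesic, Lemma \ref{thin.comp} yields
\[
  \ang{s_i}{w}{s_{i-1}}\ge\tang{\kappa}{s_i}{w}{s_{i-1}},\qquad
  \ang{s_i}{w}{s_{i+1}}\ge\tang{\kappa}{s_i}{w}{s_{i+1}}.
\]
Second, since $\geod{uv}$ is a $\kappa$-geodesic, Corollary \ref{2.k-domain} provides a $\kappa$-domain $\Omega_i$ containing $s_i$ together with a subarc of $\geod{uv}$ through it; choosing $w'\in\geodii{s_iw}\cap\Omega_i$ close to $s_i$, the $\kappa$-domain axiom applied at $s_i\in\geodii{s_{i-1}s_{i+1}}$ yields $\ang{s_i}{w'}{s_{i-1}}+\ang{s_i}{w'}{s_{i+1}}=\pi$, which equals $\ang{s_i}{w}{s_{i-1}}+\ang{s_i}{w}{s_{i+1}}$ because $w'\in\geod{s_iw}$ and angles depend only on directions. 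Combining gives $\tang{\kappa}{s_i}{w}{s_{i-1}}+\tang{\kappa}{s_i}{w}{s_{i+1}}\le\pi$ at every interior $s_i$.

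Now I would invoke Lemma \ref{triangle.comp.mult.im} with $p=w$, $q=u$, $s=v$ and all $\kappa_i=\kappa$: since $\max_{i,j}|\kappa_i-\kappa_j|=0$, the bound $\bar\kappa$ reduces to $\kappa$, and so $\tang{\kappa}{u}{w}{s_1}\ge\tang{\kappa}{u}{w}{v}$. One more application of Lemma \ref{thin.comp} to the $\kappa$-geodesic $\geod{wu}$ with the nearby point $s_1$ gives $\ang{u}{w}{s_1}\ge\tang{\kappa}{u}{w}{s_1}$, and $\ang{u}{w}{s_1}=\ang{u}{w}{v}$ since $s_1\in\geod{uv}$ shares its direction with $v$ at $u$; chaining these three inequalities delivers the claim. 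The main technical obstacle is the uniform lower bound on the thin-comparison radius supplied by Lemma \ref{thin.comp} for the whole family $\{\geod{ws_i}\}$ arising here; this is resolved because the family is parametrized by a compact set and the $\kappa$-domain coverings can be drawn from a fixed finite collection inherited from the covering of $\geod{pq}$, giving a subdivision fineness independent of $(u,v,w)\in B_r(\geod{p_0q})^3$.
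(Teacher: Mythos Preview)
Your argument is correct and reaches the same conclusion, but the paper takes a more direct route. Instead of subdividing $\geod{uv}$ and chaining Alexandrov's lemma over the pieces, the paper applies Lemma~\ref{geod.close} \emph{twice}: first to find $\delta>0$ so that any geodesic between two points of $B_\delta(\geod{p_0q})$ is a $\kappa$-geodesic, and then to find $r\in(0,\delta)$ so that any $\geod{uv}$ with $u,v\in B_r(\geod{p_0q})$ stays inside $B_\delta(\geod{p_0q})$. This guarantees that $\geod{wx}$ is a $\kappa$-geodesic for \emph{every} $x\in\geod{uv}$, so Lemma~\ref{thin.comp} applied at each such $x$ shows that $g=\md_\kappa\circ d(w,\cdot)$ satisfies $g''\le 1-\kappa g$ along $\geod{uv}$; this $(1-\kappa g)$-concavity is equivalent to $\ang{u}{w}{v}\ge\tang{\kappa}{u}{w}{v}$, and the proof is finished without any subdivision or compactness bookkeeping. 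Your discrete Alexandrov chain is essentially the finite-difference version of the same concavity argument.

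Two minor points on your write-up. First, when all $\kappa_i=\kappa$ your appeal to Lemma~\ref{triangle.comp.mult.im} is really just the classical Alexandrov lemma (Lemma~\ref{Alex.lem}) iterated; citing that directly is cleaner, since the function $\psi$ in Lemma~\ref{triangle.comp.mult.im} is specified only through its limit as the first argument tends to $0^+$, not through its value at $0$. Second, your opening paragraph only arranges that $\geod{uv}$, $\geod{uw}$, $\geod{vw}$ are $\kappa$-geodesics, whereas your use of Lemma~\ref{thin.comp} at each node $s_i$ requires $\geod{ws_i}$ to be a $\kappa$-geodesic for every $s_i\in\geod{uv}$; the double application of Lemma~\ref{geod.close} described above is exactly what secures this, and it would replace the informal compactness discussion in your final paragraph.
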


\begin{proof}
  By Lemma \ref{geod.close}, there is $\delta>0$ such that for any $x,y\in B_\delta(\geod{p_0,q})$, any geodesic $\geod{xy}$ is a $\kappa$-geodesic. Apply Lemma \ref{geod.close} once more. Take $r>0$ small so that for any $u,v\in B_r(\geod{p_0q})$, any geodesic $\geod{uv}\subset B_\delta(\geod{p_0q})$. Thus for any $x\in\geod{uv}$, geodesic $\geod{wx}$ is a $\kappa$-geodesic. By Lemma \ref{thin.comp}, the function $g(x)=\textsf{md}_\kappa\circ \dist{w,x}$ is a $(1-\kappa g)$-concave function when restricted to $\geod{uv}$. This implies that $\ang{u}{w}{v}\ge\tang{\kappa}{u}{w}{v}$.
\end{proof}

\begin{cor}\label{uniform.b1-1}
  Let $\geod{pq}$ be a $\kappa$-geodesic. For any $\kappa_1<\kappa$, there exists $r>0$ such that $\ang{x}{p}{y}\ge\tang{\kappa_1}{x}{p}{y}$ for any $x,y\in B_r(q)$. In particular, for any geodesic $\geod{qs}\subset B_r(q)$, we have $\ang{q}{p}{s}\ge\tang{\kappa_1}{q}{p}{s}$ and $\ang{s}{p}{q}\ge\tang{\kappa_1}{s}{p}{q}$.
\end{cor}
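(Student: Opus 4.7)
I would prove the inequality at the vertex $x$ by first doing the comparison at an interior auxiliary vertex $p_0\in\geodii{pq}$ very close to $p$---for which Lemma \ref{thin.comp.er} gives the strict $\kappa$-comparison---and then transferring the result back to $p$ via Lemma \ref{tri.comp.2.1}, at the cost of a curvature defect that vanishes as $|pp_0|\to 0$ and can therefore be kept above $\kappa_1$.

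Concretely, I would first choose $p_0\in\geodii{pq}$ with $|pp_0|$ so small that the parameter $\kappa^*=\kappa^*(|qp|,|qp_0|,\kappa)$ from Lemma \ref{tri.comp.2.1} satisfies $\kappa^*>\kappa_1$; this is possible because $\kappa^*\to\kappa$ as $a\to r$ in that lemma. Lemma \ref{thin.comp.er} then furnishes a radius $r_1>0$ such that $\ang{u}{w}{v}\ge\tang{\kappa}{u}{w}{v}$ for every $u,v,w\in B_{r_1}(\geod{p_0q})$. For $x\in B_r(q)$ with $r$ to be determined, let $p_{0,x}$ be the point of $\geodii{xp}$ at distance $|xp|-|pp_0|$ from $x$. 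Uniqueness of $\geod{qp}$---which follows from the $\kappa$-geodesic structure via Lemma \ref{Pet13-Lem2.5.2}---forces every ultralimit of $\geod{x_n p}$, $x_n\to q$, to equal $\geod{qp}$, so $p_{0,x}\to p_0$ as $x\to q$. Consequently $r$ can be chosen small enough that simultaneously $x,y,p_{0,x}\in B_{r_1}(\geod{p_0q})$, the lengths $|xp|$ and $|xp_{0,x}|$ stay close enough to $|qp|$ and $|qp_0|$ that $\kappa^*(|xp|,|xp_{0,x}|,\kappa)>\kappa_1$ persists by continuity, and $|xy|$ is small enough to apply the explicit Taylor form of Lemma \ref{tri.comp.2.1}.

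With these preparations, the conclusion comes from a four-step chain: $\ang{x}{p}{y}=\ang{x}{p_{0,x}}{y}$ because $p_{0,x}\in\geod{xp}$; $\ang{x}{p_{0,x}}{y}\ge\tang{\kappa}{x}{p_{0,x}}{y}$ by Lemma \ref{thin.comp.er}; $\tang{\kappa}{x}{p_{0,x}}{y}\ge\tang{\kappa^*}{x}{p}{y}$ by Lemma \ref{tri.comp.2.1} applied with $y_0=p_{0,x}$, $y=p$, and (that lemma's) $z=y$; and $\tang{\kappa^*}{x}{p}{y}\ge\tang{\kappa_1}{x}{p}{y}$ by monotonicity of the $\kappa$-model angle in $\kappa$ for fixed sidelengths. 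The \emph{in particular} clause follows by specializing $(x,y)$ to $(q,s)$ and $(s,q)$.

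The hardest part is the joint choice of $r$: I need a single small neighborhood of $q$ on which the position of $p_{0,x}$, the smallness of $|xy|$, and the near-equality of the relevant side lengths all cooperate. This is really a quantitative continuity statement anchored on the $\kappa$-geodesic uniqueness of $\geod{pq}$ from Lemma \ref{Pet13-Lem2.5.2}; once it is in hand, the algebraic chain above is essentially routine bookkeeping.
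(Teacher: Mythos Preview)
Your proposal is correct and follows the same route as the paper: choose an auxiliary point close to $p$, apply Lemma~\ref{thin.comp.er} to get the $\kappa$-comparison at $x$, then use Lemma~\ref{tri.comp.2.1} to pass from the auxiliary point to $p$ at the cost $\kappa\mapsto\kappa^*>\kappa_1$.

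The only difference is a matter of bookkeeping. The paper works with the \emph{fixed} point $p_0\in\geod{pq}$ and simply writes $\ang{x}{p}{y}=\ang{x}{p_0}{y}$ before invoking Lemma~\ref{tri.comp.2.1}; strictly speaking that equality needs $p_0\in\geod{xp}$, which is not literally the case. Your introduction of the moving point $p_{0,x}\in\geod{xp}$ and the continuity argument $p_{0,x}\to p_0$ is precisely the step the paper suppresses. One small caveat: the uniqueness of $\geod{qp}$ you invoke is not directly contained in Lemma~\ref{Pet13-Lem2.5.2} as stated (that lemma restricts to points of $\geodci{pq}$, i.e.\ excludes the endpoint $q$). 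You can either repeat the ultralimit/bifurcation argument from its proof at $q$---which works because $q$, lying on a $\kappa$-geodesic, is itself inside a $\kappa$-domain---or sidestep the issue entirely by noting that $|pp_{0,x}|=|pp_0|$ already forces $p_{0,x}\in B_{2|pp_0|}(p_0)$, and then choose $|pp_0|$ small relative to the size of the $\kappa$-domain at $p$ so that Lemma~\ref{thin.comp.er} still applies with $p_{0,x}$ in place of $p_0$.
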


\begin{proof}
  For $\epsilon>0$ small, take $p_0\in\geod{pq}$ such that $|pp_0|=\epsilon$. By Lemma \ref{thin.comp.er}, there is $r>0$ such that for any $x,y\in B_r(q)$, $\ang{x}{p_0}{y}\ge\tang{\kappa}{x}{p_0}{y}$. By Lemma \ref{tri.comp.2.1}, we get $\tang{\kappa}{x}{p_0}{y}\ge \tang{\kappa^*}{x}{p}{y}$, where $\kappa^*\to\kappa$ as $p_0\to p$. For any $\kappa_1<\kappa$, $\epsilon$ can be chosen small so that $\kappa^*\ge\kappa_1$. Then we have
  \begin{align*}
    \ang{x}{p}{y}=\ang{x}{p_0}{y}\ge\tang{\kappa}{x}{p_0}{y}
    \ge \tang{\kappa^*}{x}{p}{y} \ge\tang{\kappa_1}{x}{p}{y}.
  \end{align*}
\end{proof}

\begin{lem}\label{exist.ks}
  Let $\geod{qs}$ be a $\kappa$-geodesic and point $p\notin\geod{qs}$. There are $r_0>0$ and $\kappa^*=\kappa^*(\geod{qs},\, \sup\{\dist{p,x}:x\in\geod{qs}\})\in\mathbb R$ such that for any $u\in B_{r_0}(\geod{qs})$ and $v\in B_{r_0}(u)$, we have $\ang{u}{p}{v}\ge\tang{\kappa^*}{u}{p}{v}$.
\end{lem}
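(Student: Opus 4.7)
The plan is to combine the $\kappa$-domain comparison available near $\geod{qs}$ with the triangle-extension Lemma \ref{tri.comp.2.1}, which converts a comparison based at a nearby proxy point into a comparison based at the distant point $p$. Since $\geod{qs}$ is a $\kappa$-geodesic, it admits a finite covering by $\kappa$-domains $\Omega_1,\dots,\Omega_N$. By compactness of $\geod{qs}$ and openness of each $\Omega_i$, there is a uniform $\rho>0$ such that for every $x\in\geod{qs}$ the ball $B_{2\rho}(x)$ lies entirely inside some $\Omega_i$. Since $p\notin\geod{qs}$ and $\geod{qs}$ is compact, $d_0=\inf\{\dist{p,x}:x\in\geod{qs}\}>0$. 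I would then fix $r_0>0$ smaller than $\rho$, smaller than $d_0/2$, and smaller than the threshold $\delta(\rho,\kappa)$ coming from Lemma \ref{tri.comp.2.1}.

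Now I would take any $u\in B_{r_0}(\geod{qs})$ and $v\in B_{r_0}(u)$. Pick $x_u\in\geod{qs}$ with $\dist{u,x_u}<r_0$ and let $\Omega_i$ be the $\kappa$-domain containing $B_{2\rho}(x_u)$, so both $u$ and $v$ lie in $\Omega_i$. Since $|up|\ge d_0-r_0>\rho$, there is a point $p_1\in\geodii{up}$ with $|up_1|=\rho$, and this $p_1$ lies in $B_{2\rho}(x_u)\subset\Omega_i$. Applying the $\kappa$-domain property of $\Omega_i$ to the geodesic $\geod{up_1}$ with exterior point $v$ yields $\ang{u}{v}{p_1}\ge\tang{\kappa}{u}{v}{p_1}$, that is, $\ang{u}{p_1}{v}\ge\tang{\kappa}{u}{p_1}{v}$; here I should also verify via Lemma \ref{geod.close} that shrinking $\rho$ if necessary keeps the short geodesics $\geod{uv}$, $\geod{up_1}$ and $\geod{p_1v}$ inside $\Omega_i$ so that the comparison applies.

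Because $p_1$ lies on $\geod{up}$, the two geodesics $\geod{up_1}$ and $\geod{up}$ have the same starting direction, so $\ang{u}{p_1}{v}=\ang{u}{p}{v}$. Next I would invoke Lemma \ref{tri.comp.2.1} with $x=u$, $y_0=p_1$, $y=p$, $z=v$, where $r=\rho$ and $a=|up|$: since $|uv|<r_0<\delta(\rho,\kappa)$, we obtain $\tang{\kappa}{u}{p_1}{v}\ge\tang{\kappa^*}{u}{p}{v}$ for some $\kappa^*=\kappa^*(|up|,\rho,\kappa)$. Chaining the inequalities gives $\ang{u}{p}{v}\ge\tang{\kappa^*}{u}{p}{v}$. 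The monotonicity of $\kappa^*$ in $a$ from Lemma \ref{tri.comp.2.1} lets me replace $|up|$ by the uniform upper bound $\sup_{x\in\geod{qs}}\dist{p,x}+r_0$, yielding a single $\kappa^*$ depending only on $\geod{qs}$ (through $\rho$ and its $\kappa$-domain covering) and on $\sup\{\dist{p,x}:x\in\geod{qs}\}$, as required.

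The main obstacle is coordinating the choice of $\rho$ and $r_0$ so that simultaneously (i) a $\rho$-neighborhood of every $u\in B_{r_0}(\geod{qs})$ lies inside a single $\kappa$-domain from the covering, (ii) the small triangle $(u,p_1,v)$ with its three sides actually sits inside that $\kappa$-domain so the local comparison is applicable, and (iii) $|uv|$ is below the effective threshold $\delta(\rho,\kappa)$ of Lemma \ref{tri.comp.2.1}. Compactness of $\geod{qs}$, finiteness of the covering, and Lemma \ref{geod.close} (which ensures geodesics between points near $\geod{qs}$ are controlled) resolve these uniformity issues.
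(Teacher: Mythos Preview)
Your proposal is correct and follows essentially the same approach as the paper: pick a proxy point $p_1\in\geod{up}$ at fixed distance $\rho$ from $u$ so that the triangle $(u,p_1,v)$ sits in a single $\kappa$-domain, use the local comparison there, and then invoke Lemma~\ref{tri.comp.2.1} to push the comparison from $p_1$ to $p$. The paper's proof is a bit more streamlined: it cites Corollary~\ref{2.k-domain} to obtain $r_0$ with $B_{4r_0}(u)$ in a $\kappa$-domain (instead of your Lebesgue-number argument), and it invokes Lemma~\ref{tri.comp.2.1} in its general form (so no need to impose $r_0<\delta(\rho,\kappa)$), taking the uniform $R=\sup\{\dist{p,x}:x\in\geod{qs}\}+10r_0$ just as you do.
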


\begin{proof}
  By Corollary \ref{2.k-domain}, there exists $r_0>0$ so that for any $u\in B_{r_0}(\geod{qs})$, $B_{4r_0}(u)$ is contained a $\kappa$-domain. Let $w\in\geod{pu}$ such that $|uw|=r_0$. Then we have
  $\ang{u}{w}{v}\ge\tang{\kappa}{u}{w}{v}$. Let $R=\sup\{\dist{p,x}: x\in\geod{qs}\}+10r_0$. By Lemma \ref{tri.comp.2.1}, there is $\kappa^*=\kappa^*(R,r_0,\kappa)$ such that
  $\tang{\kappa}{u}{w}{v}\ge\tang{\kappa^*}{u}{p}{v}$. Therefore, $\ang{u}{p}{v}=\ang{u}{w}{v} \ge\tang{\kappa^*}{u}{p}{v}$. We would like to point out that when $|uv|<\delta(r,\kappa)$ is small, one can select
  $\kappa^*=f_{R}^{-1}(f_{r}(\kappa))$.
\end{proof}

%\red{Remark. Stepping away is not needed for one-side comparison, but it is needed for two-side comparison.}

\section{Globalization with weak $\mathfrak p_\lambda$-convexity}

In this section, we always assume $\mathcal U\in\Alex_{loc}(\kappa)$. We first need a better perturbation for weak $\mathfrak p_\lambda$-convexity.

\begin{lem}\label{conv.geod.pert}
  Suppose that $\mathcal U\in\Alex_{loc}(\kappa)$ is weakly $\mathfrak p_\lambda$-convex. If $\lambda>0$, then for any $p,q,s\in\mathcal U$ and any $0<\epsilon<\lambda$, there is a geodesic triangle $\triangle\bar p\bar q\bar s$ such that
\begin{enumerate}
  \item $\bar p\in B_\epsilon(p)$, $\bar q\in B_\epsilon(q)$ and $\bar s\in B_\epsilon(s)$;
  \item $\geod{\bar p\bar q}$ and $\geod{\bar p\bar s}$ are $\kappa$-geodesics;
  \item ${\bf Pr}\left(\bar p\prec\geod{\bar q\bar s}\right)\ge\lambda-\epsilon$.
\end{enumerate}
\end{lem}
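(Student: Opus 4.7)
My strategy is to first apply weak $\mathfrak p_\lambda$-convexity to produce a preliminary triangle satisfying the probability bound, then to perturb its vertices (using the weak convexity that follows from $\lambda>0$) so that the two sides from the $p$-vertex become $\kappa$-geodesics. The stability result Lemma~\ref{geod.close} will play a double role: it allows cascaded perturbations without losing previously-established $\kappa$-geodesic connections, and it ensures the probability bound carries over to the perturbed configuration.

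Fix small auxiliary scales $\epsilon',\epsilon'',\epsilon'''>0$ with $\epsilon'+\epsilon''+\epsilon'''<\epsilon$. Apply weak $\mathfrak p_\lambda$-convexity to $(p,q,s)$ at scale $\epsilon'$, producing $p_0,q_0,s_0$ in the respective $\epsilon'$-balls and a geodesic $\gamma_0=\geod{q_0s_0}\subset\bar{\mathcal U}$ with $\mathcal H^1\bigl(\cnnt{p_0}{\gamma_0}\bigr)>(\lambda-\epsilon')\mathcal H^1(\gamma_0)$. For each $y\in\cnnt{p_0}{\gamma_0}$ the geodesic $\geod{p_0y}$ lies entirely in $\mathcal U$, so by the local Alexandrov hypothesis together with Corollary~\ref{2.k-domain} it is automatically a $\kappa$-geodesic. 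The sides $\geod{p_0q_0}$ and $\geod{p_0s_0}$ themselves, however, need not be $\kappa$-geodesics, since $q_0$ and $s_0$ may fail to lie in $\cnnt{p_0}{\mathcal U}$.

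To upgrade the two sides, I apply weak convexity twice in succession. First, applying weak convexity to $(p_0,q_0)$ at scale $\epsilon''$, I obtain $p_1\in B_{\epsilon''}(p_0)$ and $\bar q\in B_{\epsilon''}(q_0)$ with $\geod{p_1\bar q}\subset\mathcal U$, which is a $\kappa$-geodesic. Second, applying it to $(p_1,s_0)$ at scale $\epsilon'''$, I obtain $\bar p\in B_{\epsilon'''}(p_1)$ and $\bar s\in B_{\epsilon'''}(s_0)$ with $\geod{\bar p\bar s}\subset\mathcal U$. By Lemma~\ref{geod.close}(2) applied to the $\kappa$-geodesic $\geod{p_1\bar q}$, choosing $\epsilon'''$ small enough ensures that the perturbation $\bar p$ of $p_1$ still admits a $\kappa$-geodesic $\geod{\bar p\bar q}$. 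Thus $(\bar p,\bar q,\bar s)$ lies in the required $\epsilon$-balls, and both sides from $\bar p$ are $\kappa$-geodesics, establishing conditions (1) and (2).

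The main obstacle is transferring the probability bound, i.e.\ verifying ${\bf Pr}(\bar p\prec\geod{\bar q\bar s})\ge\lambda-\epsilon$. When the total perturbation is small, the new geodesic $\geod{\bar q\bar s}$ (taken in $\bar{\mathcal U}^\omega$ if necessary) is close to $\gamma_0$ by an ultralimit/continuity argument. Then by Lemma~\ref{geod.close} each $\kappa$-geodesic $\geod{p_0y}\subset\mathcal U$ with $y\in\cnnt{p_0}{\gamma_0}$ persists, under the perturbation, to a $\kappa$-geodesic $\geod{\bar p\bar y}\subset\mathcal U$ for a nearby point $\bar y\in\geod{\bar q\bar s}$, so that $\bar y\in\cnnt{\bar p}{\geod{\bar q\bar s}}$. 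Consequently $\mathcal H^1\bigl(\cnnt{\bar p}{\geod{\bar q\bar s}}\bigr)$ differs from $\mathcal H^1\bigl(\cnnt{p_0}{\gamma_0}\bigr)$ by a quantity of order $\psi(\epsilon'+\epsilon''+\epsilon''')$, and selecting the auxiliary scales small enough yields the required probability bound.
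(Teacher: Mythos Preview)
The transfer of the probability bound in your final paragraph is where the argument breaks down, and the issues there are not merely technical.

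First, you assert that $\geod{\bar q\bar s}$ is close to $\gamma_0$ ``by an ultralimit/continuity argument''. But $\gamma_0=\geod{q_0s_0}$ is only known to lie in $\bar{\mathcal U}$; nothing in the hypotheses says it is a $\kappa$-geodesic, so Lemma~\ref{geod.close} does not apply to it and you have no uniqueness or stability of geodesics near it. In $\bar{\mathcal U}^\omega$ there may be many geodesics between $\bar q$ and $\bar s$, none close to $\gamma_0$. Second, even granting closeness of the base segments, for each $y\in\cnnt{p_0}{\gamma_0}$ Lemma~\ref{geod.close} produces a stability radius $\epsilon(y)>0$ depending on the particular $\kappa$-domain covering of $\geod{p_0y}$. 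You need one perturbation scale that works for all $y$ simultaneously, but there is no reason $\inf_y\epsilon(y)>0$: the geodesics $\geod{p_0y}$ may pass arbitrarily close to $\bar{\mathcal U}\setminus\mathcal U$. There is also a smaller issue in your second paragraph: Lemma~\ref{geod.close} requires one of the reference points to be \emph{interior} to the $\kappa$-geodesic, so perturbing the endpoint $p_1$ of $\geod{p_1\bar q}$ to $\bar p$ and expecting $\geod{\bar p\bar q}$ to remain a $\kappa$-geodesic is not directly justified.

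The paper avoids all of this by reversing the order of operations. It first builds the $\kappa$-geodesic scaffolding to proxies $q_1$ for $q$ and $s_2$ for $s$, at each stage moving to an \emph{interior} point of the last $\kappa$-geodesic before the next perturbation (this is exactly what makes Lemma~\ref{geod.close} applicable). Only \emph{afterwards} does it invoke weak $\mathfrak p_\lambda$-convexity, at a scale small enough that Lemma~\ref{geod.close} guarantees $\geod{\bar p\bar q}$ and $\geod{\bar p\bar s}$ are $\kappa$-geodesics. The probability bound then comes directly from this final application and never needs to be transferred.
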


\begin{proof}
  Let $0<\epsilon_3\ll\epsilon_2\ll\epsilon_1<\epsilon/10$ be all small. We first select a $\kappa$-geodesic $\geod{p_1q_1}$ that satisfies $p_1\in B_{\epsilon_1}(p)$ and $q_1\in B_{\epsilon_1}(q)$. Let $\bar p_1\in\geod{\bar p_1q_1}$ such that $|p_1\bar p_1|=\epsilon_2$. There is a $\kappa$-geodesic $\geod{p_2s_2}$ that satisfies $p_2\in B_{\epsilon_2}(\bar p_1)$ and $s_2\in B_{\epsilon_2}(s)$. Let $\bar p_2\in\geod{\bar p_2s_2}$ such that $|p_2\bar p_2|=\epsilon_2$. By the definition of weak $\mathfrak p_\lambda$-convexity, there are points $\bar p\in B_{\epsilon_3}(\bar p_2)$, $\bar q\in B_{\epsilon_3}(q_1)$, $\bar s\in B_{\epsilon_3}(s_2)$ and a geodesic $\geod{\bar q\bar s}$, such that ${\bf Pr}\left(\bar p\prec\geod{\bar q\bar s}\right)>\lambda-\epsilon_3>\lambda-\epsilon$.
  By Lemma \ref{geod.close}, when $\epsilon_3\ll\epsilon_2\ll\epsilon_1$, $\geod{\bar p\bar q}$ and $\geod{\bar p\bar s}$ are both $\kappa$-geodesics. At last, we also have
  \begin{align*}
    |q\bar q|
    &\le |qq_1|+|q_1\bar q|\le \epsilon_1+\epsilon_3<\epsilon,
  \end{align*}
  \begin{align*}
    |s\bar s|
    &\le |ss_2|+|s_2\bar s|\le \epsilon_2+\epsilon_3<\epsilon
  \end{align*}
  and
  \begin{align*}
    |p\bar p|
    &\le |pp_1|+|p_1\bar p_1|+|\bar p_1p_2|+|p_2\bar p_2|+|\bar p_2\bar p|
    \\
    &\le 2\epsilon_1+2\epsilon_2+\epsilon_3<\epsilon.
  \end{align*}

\end{proof}

%\begin{cor}\label{uniform.b1}
%  Let $\geod{qs}_{X}$ be a $\kappa$-geodesic with $\kappa$-domain covering $\{\Omega_1, \Omega_2, \Omega_3\}$. Let $p_0\in\geod{pq}_{X}\subset \mathcal U$ with $|pp_0|=\epsilon_0$. For any $\underline\kappa<\kappa$, there are $\epsilon_1\ll\epsilon_0$ and $r_0=r_0(\kappa, \underline\kappa, \Omega_1, \Omega_2,\Omega_3)>0$ small such that for any $\bar p\in B_{\epsilon_1}(p_0)$, $\tang{\underline\kappa}{q}{\bar p}{y}$ is non-increasing with respect to $|qy|\in(0,r_0)$ and $y\in\geod{qs}$.
%\end{cor}

%\begin{remark}
%  Here $\geod{\bar pq}_\mathcal U$ may not exist. In this case we consider $\geod{\bar pq}_{X}$. Since $q\in \mathcal U$ is locally Alexandrov, $\ang{q}{\bar p}{s}$ is still well defined as the limit of comparison angles and the stated comparison still holds.
%\end{remark}

%For completeness, we give the following lemma which was used to prove Lemma \ref{thin.comp}.
%
%\begin{lem}
%  Let $p,q,s\in \mathcal U$ which satisfy $|qs|\le\curlyvee^\kappa\left\{\ang{q}{p}{s},|qp|,|qs|\right\}$. Then for any $\kappa_1<\kappa$, there is $\epsilon>0$ such that for any $p_1\in B_\epsilon(p)$,
%  $$|p_1b|\le\curlyvee^{\kappa_1}\left\{\ang{q}{p}{s},|qp|+|pp_1|,|qs|\right\},$$
%  where $\dsp \kappa_1=\kappa-\psi\left(\frac{|ss_1|}{\sin^2(\ang{q}{p}{s})|qs|^2}\right)$.
%\end{lem}

The size $r$ of the comparison in Lemma \ref{thin.comp} depends on the way that $\geod{pq}$ is contained in the $\kappa$-domains. When the $r$ is made larger, the comparison curvature may drop. The following two lemmas show that the defection of the lower curvature bound is controlled by the probability of points in $\geod{qs}$ that can be connected to $p$ by a geodesic in $\mathcal U$.

\begin{lem}\label{comp.X1} Let $p\in\mathcal U$, $\kappa,\kappa^*\in\mathbb R$ and $l>0$. For any $\kappa_1<\kappa$, there is $\delta=\delta(|pq|,\kappa_1,\kappa^*)>0$ such that the following holds for any $\kappa$-geodesic $\geod{qs}$ that satisfies $|qs|<\delta$ and $|pq|\ge l$. Suppose $\ang{y_1}{p}{q}\ge\tang{\kappa_1}{y_1}{p}{q}$ for some $y_1\in\geodic{qs}$ and $\ang{x}{p}{y}\ge\tang{\kappa^*}{x}{p}{y}$ for any $x,y\in\geod{qs}$. Then
$$\tang{\kappa_1}{q}{p}{y_1} \ge\tang{\bar\kappa}{q}{p}{s},$$
for $\bar\kappa=\lambda^2\kappa_1+(1-\lambda^2)\kappa^*$, where $\lambda={\bf Pr}(p\prec\geod{qs})$. In particular, if $\geod{pq}$ is a $\kappa$-geodesic, then we have $\ang{q}{p}{s} \ge\tang{\bar\kappa}{q}{p}{s}$.
\end{lem}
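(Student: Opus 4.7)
The strategy is to partition $\geod{qs}$ into alternating pieces with $y_1$ as the first interior vertex, forming ``good'' pieces (with endpoints in $G=\cnnt{p}{\geod{qs}}$, providing $\kappa$-comparison via Lemma \ref{thin.comp}) and ``bad'' pieces (covered only by the hypothesized $\kappa^*$-comparison), then combining them using the multi-triangle weighted Alexandrov lemma (Corollary \ref{cor.mult.Alex}). The probability $\lambda$ dictates the total length of good pieces, and this is what produces the factor $\lambda^2$ in the final curvature $\bar\kappa$.

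For every small $\eta>0$ I construct a partition $q=x_0<x_1=y_1<x_2<\dots<x_{2N}=s$ with $x_2,\dots,x_{2N-1}\in G$, with every piece of length at most $h$, and with the sum $B_\eta$ of good-piece lengths (treating $[q,y_1]$ and the intervals $[x_{2i-2},x_{2i-1}]$ for $i\ge 2$ as the good pieces) tending to $\lambda\cdot|qs|$ as $\eta\to 0$; such a partition exists by standard measure-theoretic approximation of $\chi_G$ in $L^1$, with endpoints perturbed into $G$ via the Lebesgue density theorem. I then verify the hinge condition $\tang{\kappa_j}{x_j}{p}{x_{j-1}}+\tang{\kappa_{j+1}}{x_j}{p}{x_{j+1}}\le\pi$ at every interior vertex: at $x_1=y_1$ the assumption gives the $\kappa_1$-comparison toward $q$ and the second hypothesis gives the $\kappa^*$-comparison toward $x_2$; at each $x_j\in G$ with $j\ge 2$, since $\geod{px_j}\subset\mathcal U$ is a $\kappa$-geodesic, Lemma \ref{thin.comp} produces a threshold $r_j$ with $\ang{x_j}{p}{y}\ge\tang{\kappa}{x_j}{p}{y}\ge\tang{\kappa_1}{x_j}{p}{y}$ for $y\in\geod{qs}$ with $|x_jy|<r_j$, and the second hypothesis covers the opposite side. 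Taking $h<\min_j r_j$ and assigning curvature $\kappa_1$ to every good piece and $\kappa^*$ to every bad piece, Corollary \ref{cor.mult.Alex} (with $\kappa_1$ substituted for $\kappa$) yields, provided $|qs|<\delta=\delta(|pq|,\kappa_1,\kappa^*)$,
\[\tang{\kappa_1}{q}{p}{y_1}\ge\tang{\bar\kappa_\eta}{q}{p}{s},\qquad\bar\kappa_\eta\ge\frac{B_\eta^2(\kappa_1-\kappa^*)}{|qs|^2}+\kappa^*.\]
Letting $\eta\to 0$, $B_\eta/|qs|\to\lambda$, hence $\bar\kappa_\eta\to\lambda^2(\kappa_1-\kappa^*)+\kappa^*=\bar\kappa$, and continuity of the model angle in the curvature parameter gives the main inequality. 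For the ``in particular'' clause, if $\geod{pq}$ is a $\kappa$-geodesic then Corollary \ref{uniform.b1-1} supplies $\ang{q}{p}{s}=\ang{q}{p}{y_1}\ge\tang{\kappa_1}{q}{p}{y_1}$ once $y_1$ is close enough to $q$, and chaining with the main inequality yields $\ang{q}{p}{s}\ge\tang{\bar\kappa}{q}{p}{s}$.

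The main obstacle I expect is the partition construction of Step 1: arranging that all interior vertices lie in $G$ while controlling the total good-piece length to approximate $\lambda\cdot|qs|$; since $G$ is only given to be measurable with measure $\lambda\cdot|qs|$, this requires careful use of the Lebesgue density theorem to place the interior $x_j$ at density-one points of $G$. A secondary subtlety is that the threshold $\delta$ from the multi-triangle combination depends \emph{a priori} on all the curvatures and the partition data, so the slack $\kappa_1<\kappa$ is used to absorb the per-triangle error terms and keep $\delta$ a function only of $|pq|,\kappa_1,\kappa^*$.
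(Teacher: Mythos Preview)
Your overall strategy---partition $\geod{qs}$ into alternating ``good'' and ``bad'' subintervals, verify the hinge inequalities at each interior vertex, then apply Corollary \ref{cor.mult.Alex}---is exactly the paper's approach, and your treatment of the ``in particular'' clause is correct. The difference lies in how the good intervals are produced, and this is where your sketch has a real gap.

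You build the partition by $L^1$-approximating $\chi_G$ by a step function, perturbing break-points into $G$ via Lebesgue density, and then requiring the mesh $h<\min_j r_j$. But the thresholds $r_j$ come from Lemma \ref{thin.comp} applied to the already-chosen points $x_j$, so the constraint on $h$ is circular: the partition depends on $h$, and the admissible $h$ depends on the partition. Refining the partition afterwards does not help, because subdivision points of a good piece would again need to lie in $G$ (Lemma \ref{thin.comp} only gives the comparison at the endpoint that lies in $G$, so with your tool both endpoints of every good piece must lie in $G$), and there is no reason $G$ should be dense inside a given good piece. This is precisely the obstacle you flag, and it is not resolved by density arguments alone.

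The paper breaks the circularity in two moves. First, it upgrades Lemma \ref{thin.comp} to Corollary \ref{uniform.b1-1}: if $x\in G$ (so $\geod{px}$ is a $\kappa$-geodesic) then for every $y$ with $|xy|<r(x)$ one has \emph{both} $\ang{x}{p}{y}\ge\tang{\kappa_1}{x}{p}{y}$ and $\ang{y}{p}{x}\ge\tang{\kappa_1}{y}{p}{x}$. Thus a good interval $[x,y]$ needs only its \emph{left} endpoint in $G$; the right endpoint $y$ is free. Second, the family $\{\,[x,y]: x\in G,\ 0<|xy|<r(x)\,\}$ together with $[q,y_1]$ is a Vitali cover of $G\setminus\{s\}$, so the Vitali covering theorem extracts a finite disjoint subcollection $\{[x_i,y_i]\}_{i=1}^N$ with $\sum_i|x_iy_i|\ge(\lambda-\eta)|qs|$. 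The threshold constraint is built into the cover from the start, so no circularity arises. Once you have these intervals the rest of your argument goes through verbatim.
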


\begin{proof}
  Let $\cnnt p{\geod{qs}}$ be the set of points in $\geod{qs}$ which can be connected to $p$ by a geodesic in $\mathcal U$.
  For every $x\in \cnnt p{\geod{qs}}$, $x\neq s$, by Corollary \ref{uniform.b1-1}, there exists $y\in\geodic{xs}$ such that $\ang{x}{p}{y}\ge\tang{\kappa_1}{x}{p}{y}$ and  $\ang{y}{p}{x}\ge\tang{\kappa_1}{y}{p}{x}$. The collection of all such segments $\geod{xy}$, together with $\geod{qy_1}$, gives a covering of $\cnnt p{\geodci{qs}}$. Let $x_1=q$. For any $\eta>0$ small, by Vitali covering theorem, there is a disjoint finite sub-collection $\dsp\{\geod{x_iy_i}\}_{i=1}^N$ so that
  \begin{align}
    \sum_{i=1}^N|x_iy_i|\ge\mathcal H^1\left(\cnnt p{\geod{qs}}\right)-\eta|qs|
    =(\lambda-\eta)|qs|.
    \label{comp.X.e4}
  \end{align}

% Clearly, we have $\ang{q}{p}{s}=\ang{q}{p}{y_1}\ge\tang{\kappa_1}{q}{p}{y_1}$. Thus it suffices to show that
%  \begin{align}
%    \tang{\kappa_1}{q}{p}{y_1} \ge\tang{\underline\kappa}{q}{p}{s}.
%    \label{comp.X.e2}
%  \end{align}

    \begin{center}\includegraphics[scale=1]{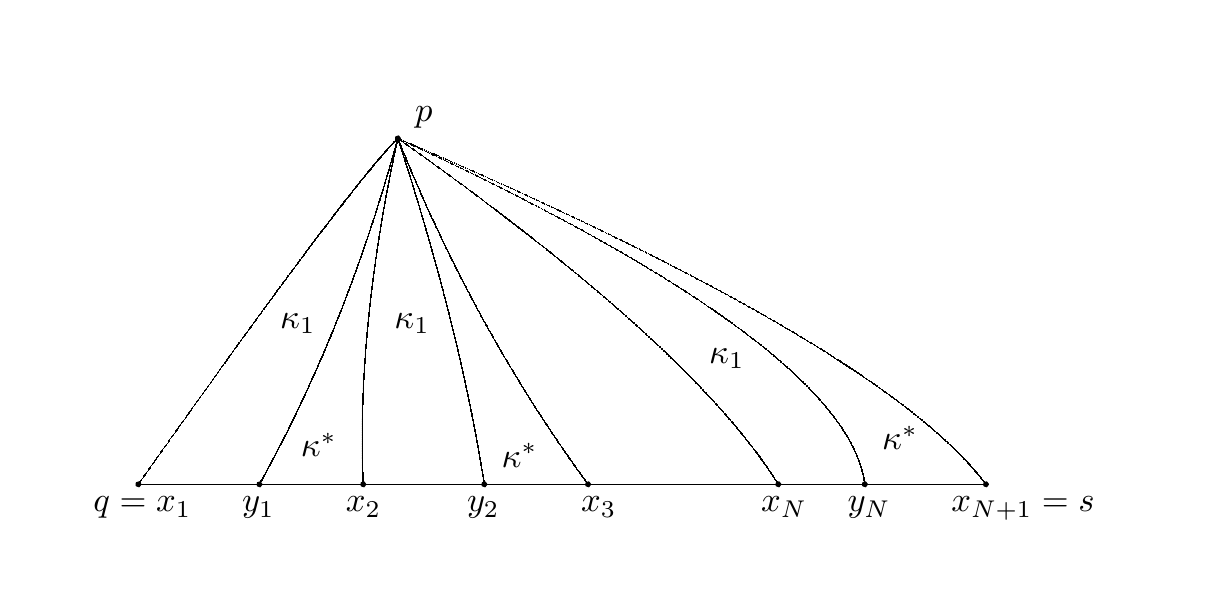}\end{center}

  Let $x_{N+1}=s$. By the construction, for all $i=2,3,\dots, N$, we have
  \begin{align}
    \tang{\kappa_1}{x_i}{p}{y_i}\le\ang{x_i}{p}{y_i}
    \text{\quad and \quad}
    \tang{\kappa_1}{y_i}{p}{x_i}\le\ang{y_i}{p}{x_i}.
    \label{comp.X.e5}
  \end{align}
By the assumptions, we have
  \begin{align}
    \ang{x_i}{p}{y_{i-1}}
    \ge\tang{\kappa^*}{x_i}{p}{y_{i-1}}
    \label{comp.X.e6}
  \end{align}
  and
  \begin{align}
    \ang{y_i}{p}{x_{i+1}}
    \ge\tang{\kappa^*}{y_i}{p}{x_{i+1}}.
    \label{comp.X.e7}
  \end{align}
  Since $\tang{\kappa_1}{y_1}{p}{q}\le\ang{y_1}{p}{q}$, by (\ref{comp.X.e5}), (\ref{comp.X.e6}) and (\ref{comp.X.e7}), together with the fact that $\geod{qs}$ is a $\kappa$-geodesic, we have
  \begin{align}
    \tang{\kappa_1}{x_i}{p}{y_i}+\tang{\kappa^*}{x_i}{p}{y_{i-1}}
    \le \ang{x_i}{p}{y_i}+\ang{x_i}{p}{y_{i-1}}\le\pi
  \end{align}
  and
  \begin{align}
    \tang{\kappa_1}{y_i}{p}{x_i}+\tang{\kappa^*}{y_i}{p}{x_{i+1}}
    \le \ang{y_i}{p}{x_i}+\ang{y_i}{p}{x_{i+1}}\le\pi.
  \end{align}

  Let $b_i=|x_iy_i|$ and $d_i=|y_ix_{i+1}|$, $i=1,2,\dots,N$. By (\ref{comp.X.e4}), we have $\dsp\sum_{i=1}^Nb_i\ge(\lambda-\eta)\sum_{i=1}^N(b_i+d_i)$. Thus when $|qs|$ is small, by Corollary \ref{cor.mult.Alex}, we get
  $$\tang{\kappa_1}{q}{p}{y_1} \ge\tang{\underline\kappa}{q}{p}{s}
  $$
  for
  \begin{align*}
    \underline\kappa
    &=\frac{(b_1+\dots+b_N)^2(\kappa_1-\kappa^*)} {(b_1+\dots+b_N+d_1+\dots+d_N)^2}+\kappa^*
    \\
    &\ge (\lambda-\eta)^2(\kappa_1-\kappa^*)+\kappa^*.
  \end{align*}
  Let $\eta\to 0$. We get the desired result.
\end{proof}

\begin{lem}\label{comp.X} Suppose that $\mathcal U$ is weakly $\mathfrak p_\lambda$-convex.
  Let both $\geod{pq}$ and $\geod{qs}$ be $\kappa$-geodesics. There is $\kappa^*=\kappa^*(p,\geod{qs})\in\mathbb R$, so that for any $\underline\kappa<\lambda^2\kappa+(1-\lambda^2)\kappa^*$,
  \begin{align}
    \ang{q}{p}{s} \ge\tang{\underline\kappa}{q}{p}{s}.
    \label{comp.X.e0}
  \end{align}
  If $\bar{\mathcal U}\in\Alex(\kappa_0)$, then (\ref{comp.X.e0}) holds for any $\underline\kappa<\lambda^2\kappa+(1-\lambda^2)\kappa_0$.
\end{lem}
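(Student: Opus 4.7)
The plan is to mimic the proof of Lemma \ref{comp.X1} on a perturbed geodesic of not-necessarily-small length, using weak $\mathfrak{p}_\lambda$-convexity to supply the probability hypothesis. First I would fix $\epsilon>0$ and invoke Lemma \ref{conv.geod.pert} to obtain a triangle $\triangle \bar p\bar q\bar s$ close to $\triangle pqs$, with $\geod{\bar p\bar q}$ and $\geod{\bar p\bar s}$ being $\kappa$-geodesics and ${\bf Pr}(\bar p\prec\geod{\bar q\bar s}) > \lambda-\epsilon$. Since $\geod{qs}$ is a $\kappa$-geodesic, Lemma \ref{geod.close} (applied after moving slightly inside $\geod{qs}$) ensures that for $\epsilon$ small enough $\geod{\bar q\bar s}$ is also a $\kappa$-geodesic, and that the constant $\kappa^*=\kappa^*(\bar p,\geod{\bar q\bar s})$ supplied by Lemma \ref{exist.ks} can be chosen uniformly close to that for $(p,\geod{qs})$.

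Second, fix $\kappa_1<\kappa$. Each $x\in\cnnt{\bar p}{\geod{\bar q\bar s}}$ is joined to $\bar p$ by a $\kappa$-geodesic (its image lies in $\mathcal U$), so Corollary \ref{uniform.b1-1} supplies a neighborhood of $x$ on $\geod{\bar q\bar s}$ on which a two-sided $\kappa_1$-angle comparison with respect to $\bar p$ holds. A Vitali selection, together with the similar neighborhood around $\bar q$ furnished by $\geod{\bar p\bar q}$ being a $\kappa$-geodesic, produces a finite disjoint family $\{\geod{x_iy_i}\}$ of short intervals covering $\cnnt{\bar p}{\geod{\bar q\bar s}}$ up to density at least $\lambda-2\epsilon$. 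On each $\geod{x_iy_i}$ one has $\kappa_1$-comparisons at both endpoints, while on each gap $\geod{y_ix_{i+1}}$ Lemma \ref{exist.ks} provides the weaker $\kappa^*$-comparison. The hinge inequalities $\ang{\cdot}{\bar p}{\cdot}+\ang{\cdot}{\bar p}{\cdot}\le\pi$ at interior vertices are automatic, as each vertex lies on the geodesic $\geod{\bar q\bar s}$.

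Third, combine the local comparisons via the weighted Alexandrov machinery of Section 1. If $|\bar q\bar s|$ is below the threshold $\delta(|\bar p\bar q|,\kappa_1,\kappa^*)$ of Corollary \ref{cor.mult.Alex}, that corollary directly gives $\tang{\kappa_1}{\bar q}{\bar p}{y_1}\ge\tang{\bar\kappa}{\bar q}{\bar p}{\bar s}$ with $\bar\kappa=(\lambda-2\epsilon)^2(\kappa_1-\kappa^*)+\kappa^*$; the $\kappa_1$-comparison $\ang{\bar q}{\bar p}{y_1}\ge\tang{\kappa_1}{\bar q}{\bar p}{y_1}$ at the vertex $\bar q$ (again from Corollary \ref{uniform.b1-1}, using that $\geod{\bar p\bar q}$ is a $\kappa$-geodesic) then yields $\ang{\bar q}{\bar p}{\bar s}\ge\tang{\bar\kappa}{\bar q}{\bar p}{\bar s}$. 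For general $|\bar q\bar s|$, first chop $\geod{\bar q\bar s}$ into finitely many sub-pieces of length less than $\delta$, run the preceding step on each, and then combine across sub-pieces by Lemma \ref{triangle.comp.mult}; the fine-grained partition of $\geod{\bar q\bar s}$ into connected and disconnected segments across all sub-pieces again fits the combinatorics of Corollary \ref{cor.mult.Alex} and delivers the same weighted lower bound. Letting $\epsilon\searrow 0$ and $\kappa_1\nearrow\kappa$, and using continuity of comparison angles together with semi-continuity of true angles along limits of $\kappa$-geodesics (Lemma \ref{geod.close}), we recover $\ang{q}{p}{s}\ge\tang{\underline\kappa}{q}{p}{s}$ for every $\underline\kappa<\lambda^2\kappa+(1-\lambda^2)\kappa^*$. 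When $\bar{\mathcal U}\in\Alex(\kappa_0)$, the global Toponogov comparison allows $\kappa^*$ in Lemma \ref{exist.ks} to be replaced by $\kappa_0$, and the same argument yields the sharper second conclusion.

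The main obstacle is the size restriction in Corollary \ref{cor.mult.Alex}. When $|\bar q\bar s|\ge\delta$, one must justify that the hierarchical combination (combine within each sub-piece first, then across) preserves the clean weighted bound $\lambda^2\kappa+(1-\lambda^2)\kappa^*$ up to an error vanishing with $\epsilon$; this is where the quadratic structure of Corollary \ref{cor.mult.Alex} is essential, and where a careless hierarchical reduction would leak to a weaker bound. A secondary but routine difficulty is the passage to the limit as the perturbed data converge to $(p,q,s)$, which relies on uniqueness of $\kappa$-geodesics (Lemma \ref{geod.close}) in the possibly non-locally-compact completion $\bar{\mathcal U}$.
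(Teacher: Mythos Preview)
Your outline has a real gap at exactly the point you flag as the ``main obstacle.'' A single application of Lemma \ref{conv.geod.pert} gives you one perturbed triangle with global density ${\bf Pr}(\bar p\prec\geod{\bar q\bar s})>\lambda-\epsilon$, but this density need not be uniformly distributed along $\geod{\bar q\bar s}$. When you chop into sub-pieces of length $<\delta$, some sub-pieces may carry probability near $1$ and others near $0$; the sub-piece comparison curvatures $\underline\kappa_j=(\lambda_j)^2\kappa_1+(1-\lambda_j^2)\kappa^*$ can therefore be as low as $\kappa^*$. The cross-piece combination you propose fails in every available form: Lemma \ref{triangle.comp.mult} and Corollary \ref{cor.mult.Alex} both require the \emph{total} side $|qs|<\delta$, so they cannot be applied at the top level; the implicit Lemma \ref{triangle.comp.mult.im} has no size restriction but only yields $\min_j\{\underline\kappa_j\}+\psi(\max|\underline\kappa_i-\underline\kappa_j|)$, which can collapse to $\kappa^*$; and the classical Alexandrov lemma needs the \emph{same} curvature on every piece. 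The ``quadratic structure'' you invoke is precisely the global formula \eqref{cor.mult.Alex.e1}, but that formula is only available once the total length is already below $\delta$.

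The paper resolves this by abandoning the single global perturbation. It first partitions $\geod{qs}$ into pieces of length $\delta/2$ and then, using weak $\mathfrak p_\lambda$-convexity \emph{afresh on each piece}, recursively selects perturbed apexes $p_i$ and bases $\geod{u_iv_i}$ with ${\bf Pr}(p_i\prec\geod{u_iv_i})\ge\lambda-\epsilon_i$. Now every sub-piece has the \emph{same} comparison curvature $\underline\kappa$ from Lemma \ref{comp.X1}, and the cross-piece step is just the classical Alexandrov lemma. The limit passage also differs from your sketch: one cannot simply cite semicontinuity of $\ang{\bar q}{\bar p}{\bar s}$ at the endpoint $q$; instead the paper anchors through an interior point $\bar p_0\in\geod{pq}$ and a short segment $\geod{qy_0}$, where Lemma \ref{thin.comp.er} gives an honest $\kappa$-comparison, and controls convergence of angles at the \emph{interior} junction points $q_i$.
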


\begin{proof}
  Let $r_0>0$ and $\kappa^*=\kappa^*(p, \geod{qs})$ be defined as in Lemma \ref{exist.ks}. If $\bar{\mathcal U}\in\Alex(\kappa_0)$, we take $\kappa^*=\kappa_0$. Fix $\epsilon_0>0$ small and take $\bar p_0\in\geod{pq}$ such that $|p\bar p_0|=\epsilon_0$. Let $\delta
  =\delta(\dist{p,\geod{qs}} -2\epsilon_0,\kappa,\kappa^*)\in(0,r_0)$ be determined as in Lemma \ref{comp.X1}.
  By Lemma \ref{thin.comp.er}, there exists $r\in(0,\delta/4)$ such that for any $p_1,x,y\in B_r(\geod{\bar p_0q})$, we have $\ang{x}{p_1}{y}\ge\tang{\kappa}{x}{p_1}{y}$.

  Let $q_i, i=1,\dots N+1$, be a partition of $\geod{qs}$, such that $q_1=q$, $|q_iq_{i+1}|=\delta/2$ and $\delta/2\le|q_{N+1}s|\le\delta$. Let $v_0=q_1$ and $p_0=p$ and $0<\epsilon_i\ll\epsilon_{i-1}\ll r$ be small. By the definition of weak $\mathfrak p_\lambda$-convexity and Lemma \ref{conv.geod.pert}, we can recursively select $p_i, \bar p_i, u_i, v_i$ for $i=1,2,\dots,N$, such that
  %$p_i\in B_{\epsilon}(p_0)$ and $u_i, v_{i-1}\in B_{\epsilon}(q_i)$, $i=1,\dots,N$, such that $\geod{p_iu_i}$ and $\geod{p_iv_i}$ are all $\kappa$-geodesics and ${\bf Pr}(p_i\prec\geod{u_iv_i}) \ge \lambda-\epsilon$.
  \begin{enumerate}
    \item $p_i\in B_{\epsilon_i}(\bar p_{i-1})$,  $u_i\in B_{\epsilon_i}(v_{i-1})$ and $v_i\in B_{\epsilon_i}(q_{i+1})$;
    \item $\geod{p_iu_i}$ and $\geod{p_iv_i}$ are all $\kappa$-geodesics and ${\bf Pr}(p_i\prec\geod{u_iv_i}) \ge \lambda-\epsilon_i$;
    \item $\bar p_i\in\geod{p_iv_i}$ and $|p_i\bar p_i|=\epsilon_i$;
  \end{enumerate}
  \begin{center}\includegraphics[scale=1]{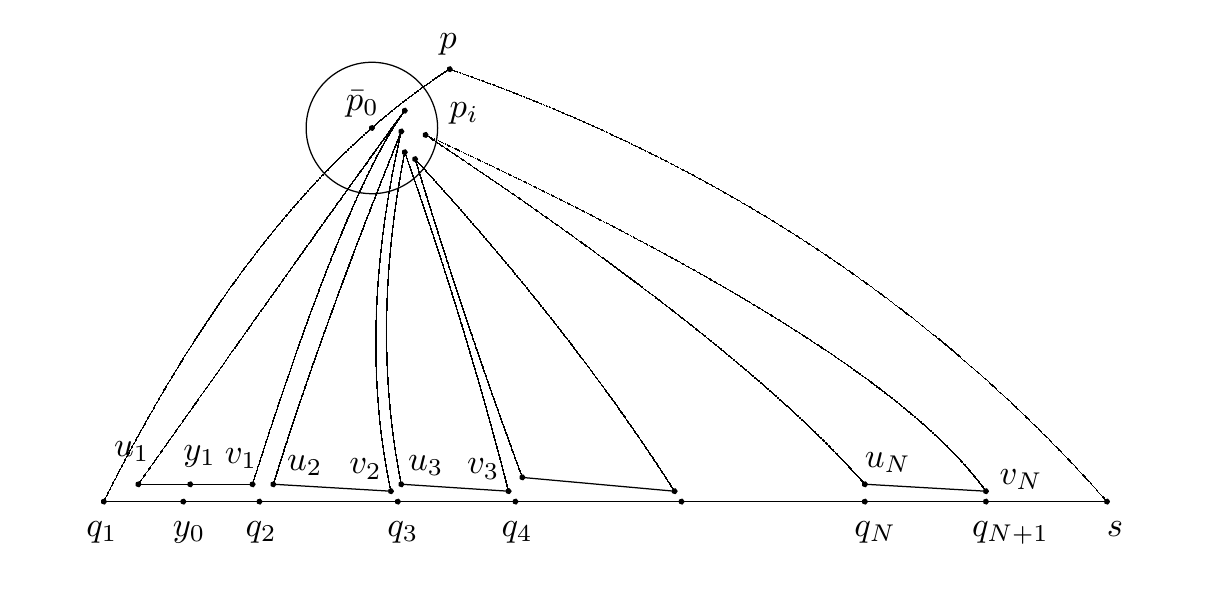}\end{center}

  Let $\epsilon=\sum_{i=1}^N\epsilon_i\ll\epsilon_0$. For a fixed $i$, when $\epsilon_i\to 0$, $\geod{p_iu_i}$ converge to geodesic $\geod{\bar p_{i-1}v_{i-1}}$. Thus when $\epsilon\to 0$, passing to a subsequence,
  $\geod{p_iu_i}$ and $\geod{p_{i-1}v_{i-1}}$  converge to the same limit geodesic $\geod{\bar p_0q_i}$. Note that $\geod{u_iv_i}$ converge to the geodesic $\geod{q_iq_{i+1}}\subset\geod{qs}$ (In the case that $\bar{\mathcal U}$ is not locally compact, we consider the $\omega$-power $\bar{\mathcal U}^\omega$ and the same argument applies). We have
  \begin{align}
    \ang{v_i}{p_i}{u_i}\to\ang{q_{i+1}}{\bar p_0}{q_i}
    \text{\quad and \quad} \ang{u_{i+1}}{p_i}{v_{i+1}}\to\ang{q_{i+1}}{\bar p_0}{q_{i+2}},
  \end{align}
  as $\epsilon\to 0$, $i=1,2,\dots,N-1$, since $q_i$, $i\ge 2$ are interior points of a $\kappa$-geodesic.
  Therefore,
  \begin{align}
    \ang{v_i}{p_i}{u_i}+\ang{u_{i+1}}{p_i}{v_{i+1}}
    \le\ang{q_{i+1}}{\bar p_0}{q_i} +\ang{q_{i+1}}{\bar p_0}{q_{i+2}}+\psi(\epsilon)
    =\pi+\psi(\epsilon).
    \label{comp.X.e3}
  \end{align}
  Let $y_0\in\geod{q_1q_2}$ such that $|q_1y_0|=r/2$. By Lemma \ref{geod.close} and the fact that $|u_1v_1|\ge \delta/2-2\epsilon_1>r$, there exists $y_1\in\geod{u_1v_1}$ such that $d(y_1,y_0)<\psi(\epsilon_1)$. Then $|q_1y_1|\le |q_1y_0|+|y_0y_1|\le\psi(\epsilon_1)+r/2<r$. Thus we have $\ang{q_1}{\bar p_0}{y_0}\ge\tang{\kappa}{q_1}{\bar p_0}{y_0}$
  and $\ang{y_1}{p_1}{u_1}\ge\tang{\kappa}{y_1}{p_1}{u_1}$.
  Let $\kappa_1<\kappa$ such that $\underline\kappa=\lambda^2\kappa_1+(1-\lambda^2)\kappa^*$. The assumptions in Lemma \ref{comp.X1} for $p_i$ and $\geod{u_iv_i}$ are satisfied by the following construction:
  \begin{itemize}
    \item $|u_iv_i|\le|q_iq_{i+1}|+4\epsilon_i <\delta$ and $|p_iu_i|\ge\dist{p,\geod{qs}} -10\epsilon>\dist{p,\geod{qs}} -\epsilon_0$.
    \item $\geod{p_iu_i}$ is a $\kappa$-geodesic.
    \item $|u_iq_i|\le 2\epsilon_i<r_0$ and $|u_iv_i|<\delta<r_0$.
  \end{itemize}
  Thus we have
  \begin{align}
    \ang{u_i}{p_i}{v_i} \ge\tang{\underline\kappa}{u_i}{p_i}{v_i},
    \text{\quad \quad}
    \ang{v_i}{p_i}{u_i} \ge\tang{\underline\kappa}{v_i}{p_i}{u_i},
    \label{comp.X.e1}
  \end{align}
  and
  \begin{align}
    \tang{\kappa_1}{u_1}{p_1}{y_1} \ge\tang{\underline\kappa}{u_1}{p_1}{y_1},
    \label{comp.X.e2}
  \end{align}
  where  $\underline\kappa=(\lambda-\epsilon)^2\kappa_1 +(1-(\lambda-\epsilon)^2)\kappa^*$. By (\ref{comp.X.e3}) and (\ref{comp.X.e1}), we have
  \begin{align}
    \tang{\underline\kappa}{v_i}{p_i}{u_i} +\tang{\underline\kappa}{u_{i+1}}{p_i}{v_{i+1}}
    \le
    \ang{v_i}{p_i}{u_i}+\ang{u_{i+1}}{p_i}{v_{i+1}} \le\pi+\psi(\epsilon).
  \end{align}
  By (\ref{comp.X.e2}) and Alexandrov's lemma, we get
  \begin{align}
    \tang{\kappa_1}{u_1}{p_1}{y_1}
    &\ge\tang{\underline\kappa}{u_1}{p_1}{v_1}
    \notag
    \\
    &\ge\tang{\underline\kappa}{u_1}{p_1}{v_N}-N\psi(\epsilon)
    \ge\tang{\underline\kappa}{u_1}{p_1}{v_N}-|qs|\frac{\psi(\epsilon)}{\delta}.
  \end{align}
  We choose $\epsilon$ small so that $\epsilon, \psi(\epsilon)\ll r,\delta$. Then
  \begin{align}
    \left|\tang{\kappa_1}{q}{\bar p_0}{y_0}-\tang{\kappa_1}{u_1}{p_1}{y_1}\right|<\psi(\epsilon\mid r,\delta).
  \end{align}
  Thus
  \begin{align*}
    \ang{q}{p}{s}
    &=\ang{q}{\bar p_0}{y_0}\ge\tang{\kappa_1}{q}{p_0}{y_0}
    \ge\tang{\kappa_1}{u_1}{p_1}{y_1} -\psi(\epsilon\mid r,\delta)
    \\
    &\ge \tang{\underline\kappa}{u_1}{p_1}{v_N}-\psi(\epsilon\mid r,\delta)
    \ge \tang{\underline\kappa}{q}{p}{s}-\psi(\epsilon\mid r,\delta)-\psi(\epsilon_0)-\psi(\delta).
  \end{align*}
  At last, let $\epsilon\ll\delta\to 0$ and $\epsilon_0\to 0$. We get the desired comparison.
\end{proof}

\begin{cor}\label{comp.X.cor1}
  Suppose that $\mathcal U$ is weakly $\mathfrak p_1$-convex. Let $(p;x_1,x_2,x_3)$ be a quadruple in $\mathcal U$. If $\geod{px_i}$, $i=1,2,3$, are all $\kappa$-geodesics and the comparison angles $\tang{\kappa}{p}{x_i}{x_j}$ are all defined, then
  \begin{align}
       \tang{\underline\kappa}{p}{x_1}{x_2} +\tang{\underline\kappa}{p}{x_2}{x_3} +\tang{\underline\kappa}{p}{x_3}{x_1}\le2\pi.
       \label{comp.X.cor.e1}
  \end{align}
\end{cor}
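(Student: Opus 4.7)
The plan is to split the quadruple inequality at $p$ into two independent ingredients: a hinge comparison at $p$ for each pair of $\kappa$-geodesics leaving $p$, which comes directly from Lemma \ref{comp.X}, and the local $2\pi$-bound on the sum of three angles at $p$, which comes from $p$ lying in a $\kappa$-domain.

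For the hinge part, I apply Lemma \ref{comp.X} with the hinge vertex taken to be $p$: both legs $\geod{p x_i}$ and $\geod{p x_j}$ are $\kappa$-geodesics by hypothesis, and $\mathcal U$ is weakly $\mathfrak p_1$-convex so $\lambda=1$. Lemma \ref{comp.X} therefore yields, for each pair $i\neq j$ and any $\underline\kappa<\kappa$,
\[
\ang{p}{x_i}{x_j}\ge\tang{\underline\kappa}{p}{x_i}{x_j}.
\]
Definedness of $\tang{\underline\kappa}{p}{x_i}{x_j}$ follows from monotonicity of the perimeter bound in $\kappa$ together with the standing hypothesis that $\tang{\kappa}{p}{x_i}{x_j}$ is defined.

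For the $2\pi$-bound, the point $p$ sits on each $\kappa$-geodesic $\geod{p x_i}$, hence lies in some $\kappa$-domain $\Omega_p$. Choose $y_i\in\geodii{p x_i}\cap\Omega_p$ close enough to $p$ that $\{y_1,y_2,y_3\}\subset\Omega_p$. The two defining conditions of a $\kappa$-domain (hinge comparison together with the adjacent-angle identity) are, by the standard equivalence from \cite{BGP}, equivalent inside $\Omega_p$ to the full $4$-point comparison, which applied to the quadruple $(p;y_1,y_2,y_3)$ reads
\[
\tang{\kappa}{p}{y_1}{y_2}+\tang{\kappa}{p}{y_2}{y_3}+\tang{\kappa}{p}{y_3}{y_1}\le 2\pi.
\]
Inside $\Omega_p$ each $\tang{\kappa}{p}{y_i}{y_j}$ is monotone as $y_i,y_j\to p$ along the respective $\kappa$-geodesics, with limit $\ang{p}{x_i}{x_j}$, so this inequality descends to $\ang{p}{x_1}{x_2}+\ang{p}{x_2}{x_3}+\ang{p}{x_3}{x_1}\le 2\pi$. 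Chaining with the hinge bound gives (\ref{comp.X.cor.e1}).

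There is no real obstacle here; once Lemma \ref{comp.X} is in place, the corollary is essentially a one-line deduction. The only conceptual point worth emphasizing is that weak $\mathfrak p_1$-convexity is precisely what upgrades the local $\kappa$-domain information at $p$ into a genuine hinge comparison between any two $\kappa$-geodesics emanating from $p$ out to arbitrarily far endpoints, which is exactly the role played here by Lemma \ref{comp.X}.
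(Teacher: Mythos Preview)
Your proof is correct and matches the paper's intended argument: the paper states the corollary as an immediate consequence of Lemma \ref{comp.X}, and you have correctly filled in the two ingredients, namely the hinge comparison $\ang{p}{x_i}{x_j}\ge\tang{\underline\kappa}{p}{x_i}{x_j}$ from Lemma \ref{comp.X} applied with hinge vertex $p$ (where $\lambda=1$ makes $\kappa^*$ irrelevant), and the $2\pi$ angle-sum at $p$ coming from $p$ lying in a $\kappa$-domain (stated explicitly in the proof of Lemma \ref{Pet13-Lem2.5.2}). Your treatment of the $2\pi$ bound via nearby points $y_i$ and monotonicity is more detailed than necessary but entirely correct.
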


\begin{cor}\label{comp.X.cor2}
  Suppose that $\mathcal U$ is weakly $\mathfrak p_\lambda$-convex and $\bar{\mathcal U}\in\Alex(\kappa_0)$. Let $(p;x_1,x_2,x_3)$ be a quadruple in $\mathcal U$. If $\geod{px_i}$, $i=1,2,3$, are all $\kappa$-geodesics and the comparison angles $\tang{\kappa}{p}{x_i}{x_j}$ are all defined, then
  \begin{align}
       \tang{\underline\kappa}{p}{x_1}{x_2} +\tang{\underline\kappa}{p}{x_2}{x_3} +\tang{\underline\kappa}{p}{x_3}{x_1}\le2\pi.
       \label{comp.X.cor.e1}
  \end{align}
\end{cor}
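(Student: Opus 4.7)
The plan is to deduce this corollary directly from the strengthened case of Lemma~\ref{comp.X} applied to each pair of $\kappa$-geodesics emanating from $p$, and then combine the three resulting angle bounds with the quadruple condition at $p$ inherited from any $\kappa$-domain containing $p$.

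First I apply Lemma~\ref{comp.X} three times, once for each unordered pair $\{i,j\}\subset\{1,2,3\}$. In the lemma's notation I identify its middle vertex $q$ with our $p$, and its two $\kappa$-geodesic legs $\geod{qp}_{\text{lem}},\geod{qs}_{\text{lem}}$ with the given $\geod{px_i},\geod{px_j}$ (both $\kappa$-geodesics by hypothesis). Since $\bar{\mathcal U}\in\Alex(\kappa_0)$, the strengthened conclusion of Lemma~\ref{comp.X} yields, for every $\underline\kappa<\lambda^2\kappa+(1-\lambda^2)\kappa_0$,
$$\ang{p}{x_i}{x_j}\ge\tang{\underline\kappa}{p}{x_i}{x_j}.$$

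Next I upgrade the quadruple condition at $p$ from comparison angles to actual angles. Since $\mathcal U\in\Alex_{loc}(\kappa)$, fix a $\kappa$-domain $\Omega_p\ni p$ and pick $x'_i\in\geodii{px_i}\cap\Omega_p$ for $i=1,2,3$. Applying the $\kappa$-domain condition to $(p;x'_1,x'_2,x'_3)\subset\Omega_p$ gives
$$\tang{\kappa}{p}{x'_1}{x'_2}+\tang{\kappa}{p}{x'_2}{x'_3}+\tang{\kappa}{p}{x'_3}{x'_1}\le 2\pi.$$
Letting $x'_i\to p$ along $\geod{px_i}$, each comparison angle on the left converges to the actual angle $\ang{p}{x_i}{x_j}$, which is well defined because $\geod{px_i}$ is a $\kappa$-geodesic (Lemma~\ref{thin.comp} gives the existence of the angle as the limit of comparison angles). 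Hence $\ang{p}{x_1}{x_2}+\ang{p}{x_2}{x_3}+\ang{p}{x_3}{x_1}\le 2\pi$.

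Finally, summing the three inequalities from the first step and chaining with the second step yields
$$\tang{\underline\kappa}{p}{x_1}{x_2}+\tang{\underline\kappa}{p}{x_2}{x_3}+\tang{\underline\kappa}{p}{x_3}{x_1}\le\ang{p}{x_1}{x_2}+\ang{p}{x_2}{x_3}+\ang{p}{x_3}{x_1}\le 2\pi,$$
which is the required comparison. Since Lemma~\ref{comp.X} contains all the analytic content, there is no serious obstacle. The only mild care is to take $\underline\kappa\le\kappa$ so that the model triangles $\tilde\triangle_{\underline\kappa}p x_i x_j$ are automatically defined (in the trivial regime $\kappa_0>\kappa$ the corollary follows directly from $\bar{\mathcal U}\in\Alex(\kappa_0)\subset\Alex(\kappa)$).
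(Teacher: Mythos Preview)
Your proof is correct and follows the approach the paper leaves implicit: the corollary is stated without proof because it is an immediate consequence of Lemma~\ref{comp.X} (applied to each pair $\{x_i,x_j\}$ with vertex $p$) combined with the fact that the three actual angles at $p$ sum to at most $2\pi$. One small simplification: since you are already assuming $\bar{\mathcal U}\in\Alex(\kappa_0)$, the inequality $\ang{p}{x_1}{x_2}+\ang{p}{x_2}{x_3}+\ang{p}{x_3}{x_1}\le 2\pi$ is immediate from the global lower bound, so the detour through a local $\kappa$-domain and a limit is unnecessary (though it is valid).
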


\section{Globalization in the metric completion}

%By the standard globalization process, it suffices to prove $\bar{\mathcal U}\in\Alex(\underline\kappa)$ for every $\underline\kappa<\kappa$.
The following lemma, together with Corollary \ref{comp.X.cor1} conclude the proof of Theorem A.

\begin{lem}\label{pert.triangle}
  Suppose that $\mathcal U$ is weakly convex and locally curvature bounded from below by $\kappa$. For any $p, x_i\in \bar{\mathcal U}$, $i=1,2,\dots,N$ and any $\epsilon>0$, there are points $\bar p\in B_\epsilon(p)$ and $\bar x_i\in B_\epsilon(x_i)$, $i=1,2,\dots, N$ so that geodesics $\geod{\bar p\bar x_i}$ are all $\kappa$-geodesics.
\end{lem}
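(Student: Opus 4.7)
My plan is to invoke weak convexity iteratively, once per index $i = 1, \dots, N$, while shrinking each successive perturbation of $p$ small enough that Lemma \ref{geod.close} preserves the $\kappa$-geodesic property secured in previous rounds. This sidesteps the fact that weak convexity supplies a geodesic for only one pair of base points at a time, whereas the conclusion requires a single $\bar p$ that simultaneously works for all $N$ endpoints.

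Since $\mathcal U$ is dense in $\bar{\mathcal U}$, I first replace $p, x_1, \dots, x_N$ by nearby points of $\mathcal U$ at arbitrarily small cost. Setting $p^{(0)} := p$, I fix a sequence $\epsilon_1, \dots, \epsilon_N > 0$ with $\sum_i \epsilon_i < \epsilon/2$, to be further shrunk in the construction. At step $i$, I assume $p^{(i-1)} \in \mathcal U$ and $\bar x_1, \dots, \bar x_{i-1} \in \mathcal U$ have already been produced so that every $\geod{p^{(i-1)} \bar x_j}$ with $j < i$ is a $\kappa$-geodesic. Using Lemma \ref{geod.close} together with the $\kappa$-domains around the endpoints that are automatic from $\mathcal U \in \Alex_{loc}(\kappa)$, I pick $\delta_i > 0$ so small that any geodesic with endpoints in $B_{\delta_i}(p^{(i-1)})$ and $B_{\delta_i}(\bar x_j)$ remains a $\kappa$-geodesic for every $j < i$. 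Then I apply weak convexity to the pair $(p^{(i-1)}, x_i)$ with parameter $\min(\epsilon_i, \delta_i)$, producing $p^{(i)}$ and $\bar x_i$ connected by a geodesic lying in $\mathcal U$; since every point of $\mathcal U$ lies in a $\kappa$-domain, this geodesic is automatically a $\kappa$-geodesic. Because $|p^{(i)} p^{(i-1)}| < \delta_i$, each $\geod{p^{(i)} \bar x_j}$ remains a $\kappa$-geodesic for $j < i$. Setting $\bar p := p^{(N)}$ yields $|p \bar p| \leq \sum_i \epsilon_i < \epsilon$ and $|x_i \bar x_i| < \epsilon$.

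The main obstacle I foresee is justifying the preservation step cleanly: Lemma \ref{geod.close}(2) as stated gives control in a tubular neighborhood of interior subsegments, so extending it right up to both endpoints requires combining it with the $\kappa$-domains around $p^{(i-1)}$ and $\bar x_j$ through the gluing supplied by Lemma \ref{Pet13-Lem2.5.2} and Corollary \ref{2.k-domain}. Once this uniform endpoint-inclusive version of stability under small perturbations is in hand, the iteration terminates in $N$ steps and delivers the required simultaneous perturbation.
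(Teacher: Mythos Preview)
Your proposal is correct and follows essentially the same iterative strategy as the paper: apply weak convexity once per index while shrinking the successive perturbations so that Lemma~\ref{geod.close} preserves the previously secured $\kappa$-geodesics.

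The one substantive difference is how the endpoint issue you flag is handled. Rather than proving an endpoint-inclusive stability statement, the paper sidesteps it by shifting the center of each new perturbation to an \emph{interior} point of the most recently obtained geodesic. Concretely, after producing the $\kappa$-geodesic $\geod{p_i\bar x_i}$, the paper picks $\bar p_i\in\geod{p_i\bar x_i}$ with $|p_i\bar p_i|=\epsilon_i$ and then applies weak convexity near $\bar p_i$ (not near $p_i$) to get $p_{i+1}\in B_{\epsilon_{i+1}}(\bar p_i)$. Since $\bar p_i$ is an interior point of $\geod{p_i\bar x_i}$, Lemma~\ref{geod.close} applies directly with $x=\bar x_i$ and $z=\bar p_i$, yielding that $\geod{y\,\bar x_i}$ is a $\kappa$-geodesic for every $y\in B_{10\epsilon_{i+1}}(\bar p_i)$. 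The nesting $|p_j\bar p_i|<10\epsilon_{i+1}$ for all $j>i$ is then arranged purely by choosing $\epsilon_{i+1}\ll\epsilon_i$, so a single application of Lemma~\ref{geod.close} at step $i$ protects $\bar x_i$ for all future perturbations. Your ``combine with the $\kappa$-domains at the endpoints via Lemma~\ref{Pet13-Lem2.5.2} and Corollary~\ref{2.k-domain}'' amounts to the same maneuver, just carried out implicitly; the paper's version is cleaner because it never needs stability at an actual endpoint.
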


\begin{proof}
  The idea has been used in the proof of Lemma \ref{conv.geod.pert}. Let $\epsilon>\epsilon_1\gg\epsilon_2\gg\dots\gg\epsilon_N>0$ be small which will be determined later. First choose $p_1\in B_{\epsilon_1}(p)$ and $\bar x_1\in B_\epsilon(x_1)$ so that $\geod{p_1\bar x_1}_{X}\subset \mathcal U$. Take $\bar p_1\in\geod{p_1\bar x_1}$ with $|\bar p_1 p_1|=\epsilon_1$. Select $p_2\in B_{\epsilon_2}(\bar p_1)$ and $\bar x_2\in B_\epsilon(x_2)$ so that $\geod{p_2\bar x_2}_{X}\subset \mathcal U$. By Lemma \ref{geod.close}, take $0<\epsilon_2\ll\epsilon_1$ so that $\geod{y\bar x_1}$ are $\kappa$-geodesics for all $y\in B_{10\epsilon_2}(\bar p_1)$. In particular, $\geod{p_2\bar x_1}_{X}$ is a $\kappa$-geodesic.

  \begin{center}\includegraphics[scale=1]{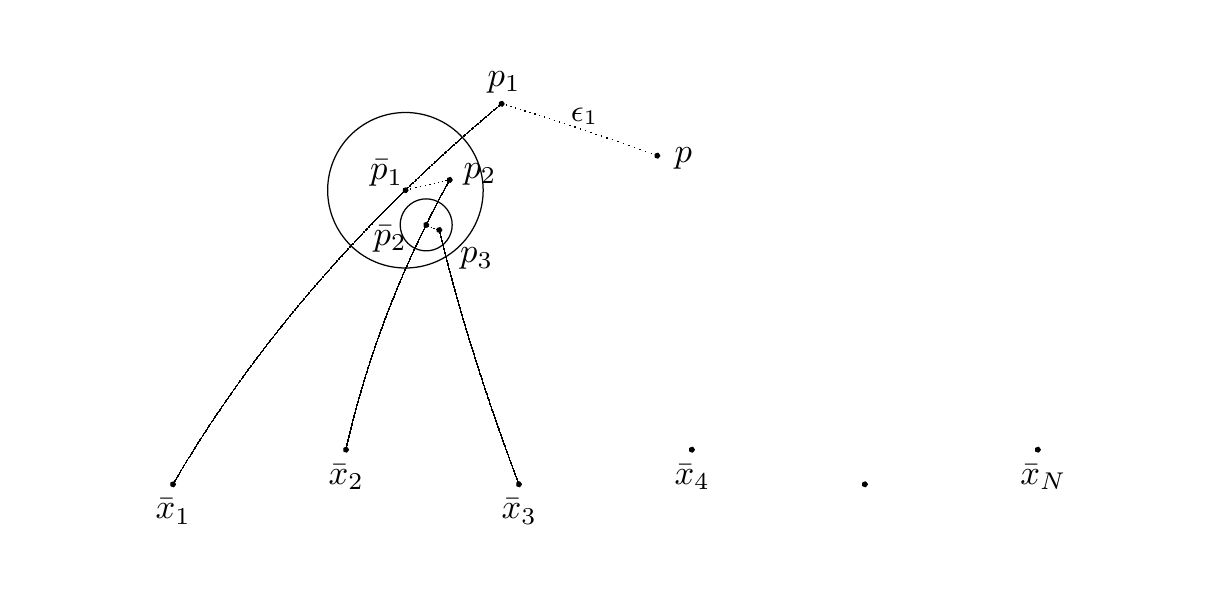}\end{center}

  \vskip -0.2in

  Take $\bar p_2\in\geod{p_2\bar x_2}$ with $|\bar p_2 p_2|=\epsilon_2$ and choose $p_3\in B_{\epsilon_3}(\bar p_2)$, $\bar x_3\in B_\epsilon(x_3)$ so that $\geod{p_3\bar x_3}\subset \mathcal U$. Due to Lemma \ref{geod.close}, take $\epsilon_3\ll\epsilon_2$ small so that $\geod{y\bar x_2}_{X}$ are $\kappa$-geodesics for all $y\in B_{10\epsilon_3}(\bar p_2)$. In particular, $\geod{p_3\bar x_2}_{X}$ is a $\kappa$-geodesic. Take $\epsilon_3$ even smaller so that $\epsilon_3+2\epsilon_2<10\epsilon_2$. Thus $p_3\in B_{\epsilon_3}(\bar p_2)\subset B_{10\epsilon_2}(\bar p_1)$. Then we get $\geod{p_3\bar x_1}$ is a $\kappa$-geodesic.

  Select $\bar p_3, p_4,\bar p_4, \dots,p_N$ recursively with
  $$|p_j\bar p_i| =\epsilon_j+2\epsilon_{j-1}+\dots+2\epsilon_{i+2}+2\epsilon_{i+1}<10\epsilon_i,$$
  for any $j>i\ge 1$. Finally, we have that $\geod{p_N\bar x_i}$ are $\kappa$-geodesics for all $0\le i\le N$. The point $\bar p=p_N$ is the desired point since
  $$|\bar pp| =|p_Np|
  \le |p_N\bar p_1|+2\epsilon_1
  \le\epsilon_N+2\epsilon_{N-1}+\dots+2\epsilon_2+2\epsilon_1<\epsilon.$$
\end{proof}

\begin{proof}[{\bf Proof of Theorem B}]
  Assume $0<\lambda<1$. By Corollary \ref{comp.X.cor2} and Lemma \ref{pert.triangle} we get that $\bar{\mathcal U}\in\Alex(\kappa_1)$, where $\kappa_1=\lambda^2\kappa+(1-\lambda^2)\kappa_0$. Repeat this argument recursively with $\kappa_{i-1}$ being replaced by $\kappa_i$ for $i=1,2,\dots$. We get $\bar{\mathcal U}\in\Alex(\kappa_i)$, where $\kappa_{i+1}=\lambda^2\kappa+(1-\lambda^2)\kappa_i$. Clearly, $\kappa_i\to\kappa$ as $i\to \infty$. Thus $\bar{\mathcal U}\in\Alex(\kappa)$.
\end{proof}

\begin{proof}[{\bf Proof of Corollary \ref{cor.ThmA}}]
  We will show that almost everywhere convexity implies weak $\mathfrak p_1$-convexity if $\mathcal U\in\Alex_{loc}(\kappa)$ is finite dimensional.

  Let $p,q,s\in \mathcal U$. By the assumption, for any $r>0$, there is $p_1\in B_\epsilon(p)$, so that $\mathcal H^n\left(\mathcal U\setminus\cnnt{p_1}{\mathcal U}\right)=0$. Choose $q_1\in B_\epsilon(q)$ and $\bar s\in B_{\epsilon/10}(s)$ so that $\geod{q_1\bar s}\subset\mathcal U$. Take $s_0\in\geod{q_1\bar s}$ such that $|s_0\bar s|=\epsilon/10$. Let ${\epsilon_1}>0$ be small so that $B_{\epsilon_1}(s_0)\subset B_\epsilon(s)$. Suppose that there is $\delta>0$ such that for every $s_1\in B_{\epsilon_1}(s_0)$ and every geodesic $\geod{q_1s_1}$,
  \begin{align}
    \mathcal H^1\left(\geod{\bar q_1s_1}\setminus\cnnt{p_1}{\geod{q_1s_1}}\right)>0,
    \label{cor.ThmA.e1}
  \end{align}
  where $\bar q_1\in\geod{q_1s_1}$ such that $|q_1\bar q_1|=\delta$. The weak $\mathfrak p_1$-convexity follows if this is not true.

  By Lemma \ref{thin.comp.r} and Alexandrov's lemma, we can choose ${\epsilon_1}$ small so that for any $u\in B_{\epsilon_1}(s_0)$, $x\in\geod{q_1s_0}$ and $y\in\geod{q_1u}$, the comparison angle $\tang{\kappa}{q_1}{x}{y}$ is decreasing in $|q_1x|$ and $|q_1y|$. Thus we have
  \begin{align}
    \frac{|xy|}{|us_0|}\ge c(\kappa)\cdot\frac{\min\{|q_1x|, |q_1y|\}}{|q_1s_0|}.
    \label{cor.ThmA.e2}
  \end{align}
  Let
  $$T=\left(\bigcup_{s_1\in B_{\epsilon_1}(s_0)}\left(\geod{ q_1s_1}\setminus\cnnt{p_1}{\geod{q_1s_1}}\right)\right) \setminus B_\delta(q_1)$$
  and $S=\{v\in\mathcal U: |q_1v|=|q_1s_0|-{\epsilon_1}\}$. For any $x\in T$, $x\neq q_1$, there is $u_x\in B_{\epsilon_1}(s_0)$ such that $x\in T\cap\geod{q_1u_x}$. Let $\bar x\in\geod{q_1u_x}$ such that $|q_1\bar x|=|q_1s_0|-{\epsilon_1}$. Then $\bar x\in S$. Define a map $\phi\colon T\to S\times\mathbb R^+; x\to (\bar x, |q_1x|)$. Due to (\ref{cor.ThmA.e1}), we have $\mathcal H^1\left(\phi(T\cap\geod{q_1u_x})\right)>0$. Suppose that $\dim(\mathcal U)=n$. By Fubini's theorem,  $\mathcal H^n(\phi(T))>0$. Because $\dist{q_1,T}\ge\delta$, by (\ref{cor.ThmA.e2}), we see that $\phi$ is $c(\kappa,|q_1s_0|,\delta)$-co-Lipschitz. Therefore, $\mathcal H^n(T)>0$. On the other hand, $T\subset\mathcal U\setminus\cnnt{p_1}{\mathcal U}$. Thus $\mathcal H^n(T)\le\mathcal H^n(\mathcal U\setminus\cnnt{p_1}{\mathcal U})=0$, a contradiction.
\end{proof}

\section{Examples}

\begin{example}\label{example.1}
  Let $\mathcal U$ be an open metric ball with radius $r$ in the unit sphere. If $r\le\pi/2$, both $\mathcal U$ and $\bar{\mathcal U}$ are convex. Petrunin's Theorem \cite{Pet13} applies to this case. When $\pi/2<r<\pi$, neither $\mathcal U$ or $\bar{\mathcal U}$ is convex. When $r=\pi$, $\mathcal U$ is a.e. convex and Theorem A applies.
\end{example}

\begin{example}\label{eg.rational}
For any $\delta\in(0,1)$, we construct a locally flat, $2$-dimensional length space $\mathcal U$, whose metric completion $\bar{\mathcal U}\in\Alex(0)$, but $\mathcal H^2(\mathcal U)\le\delta\cdot \mathcal H^2(\bar{\mathcal U})$.

Let $X=[0,1]\times[0,1]$ be the unit square equipped with the induced Euclidean metric. Let $\{\gamma_i\}_{i=1}^\infty$ be the collection of all segments in $X$ whose coordinates of end points are pairs of rational numbers. Let $\mathcal U=\bigcup_{i=1}^\infty B_{r_i}(\gamma_i)$, where $\sum_{i=1}^\infty r_i=\delta/4$. Then $\mathcal H^2(\mathcal U)\le\delta< 1=\mathcal H^2(X)$. Now we show that $\bar{\mathcal U}=X$. For any two points $p, q\in X$ and any $\epsilon>0$, there are rational points $\bar p=(x_1,y_1)\in B_\epsilon(p)$ and $\bar q=(x_2,y_2)\in B_\epsilon(q)$. Thus for some $i$, $\geod{\bar p\bar q}=\gamma_i\subset \mathcal U$. We have
$$
  \textsf{d}_{\bar{\mathcal U}}(p, q)
  \ge\textsf{d}_X(p, q)
  \ge L(\gamma_i)-2\epsilon
  =\textsf{d}_{\mathcal U}(\bar p, \bar q)-2\epsilon
  \ge \textsf{d}_{\bar{\mathcal U}}(p, q)-4\epsilon.
$$
Therefore, $\bar{\mathcal U}$ is isometric to $X$.
\end{example}

%\input{bbb}

%-----------------------------------------------------------------------------
% Beginning of biblio.tex
%-----------------------------------------------------------------------------
%
% AMS-LaTe\mathcal U 1.2 sample file for a monograph, based on amsbookn.cls.
% This is a data file input by chapter.tex.
%%%%%%%%%%%%%%%%%%%%%%%%%%%%%%%%%%%%%%%%%%%%%%%%%%%%%%%%%%%%%%%%%%%%%%%%

\vskip 30mm

\bibliographystyle{amsalpha}

%-----------------------------------------------------------------------------
% End of biblio.tex
%-----------------------------------------------------------------------------
\end{document}